\def\strokedint{\fint}
\newcommand{\weakstarto}{\ensuremath{\overset{\ast}{\rightharpoonup}}}
\newcommand\weakto{{\rightharpoonup}}
\numberwithin{equation}{section}
\newtheorem{theorem}{Theorem}[section]
\newtheorem{definition}[theorem]{Definition}
\newtheorem{remark}[theorem]{Remark}
\newtheorem{lemma}[theorem]{Lemma}
\newcommand\R{\mathbb{R}}
\newcommand\Q{\mathbb{Q}}
\newcommand\C{\mathbb{C}}
\newcommand\Z{\mathbb{Z}}
\newcommand\N{\mathbb{N}}
\newcommand\rank{\mathop{\mathrm{rank}}}
\newcommand\Div{\mathrm{div\,}}
\newcommand\Tr{\mathop{\mathrm{Tr}}}
\newcommand\eps{\varepsilon}
\newcommand\calS{\mathcal{S}}
\newcommand\sym{\mathrm{sym}}
\newcommand\per{\mathrm{per}}
\newcommand\gD{g_D}
\def\diag{\mathrm{diag}}
\newcommand{\calA}{\mathcal{A}}
\newcommand{\calH}{\mathcal{H}}
\def\SO{\mathrm{SO}}
\def\nabla{D}
\renewcommand{\epsilon}{\varepsilon}
\newcommand\pmin{p_{\min}}
\newcommand\pmax{p_{\max}}
\newcommand\dist{\operatorname{dist}}
\newcommand\Id{\operatorname{Id}}
\newcommand\conv{\mathrm{conv}}
\renewcommand\div{\mathrm{div\,}}
\newcommand\divv{\mathrm{div}}
\newcommand\sdiv{\mathrm{s-div\,}}
\newcommand\sdqclong{symmetric $\divv$-quasiconvex}
\newcommand\sdqc{{\mathrm{sdqc}}}
\newcommand\Ksdqc{K^\sdqc}
\newcommand\Kp{K^{(p)}}
\newcommand\Kinfty{K^{(\infty)}}
\newcommand\Qsd{\mathcal{Q}_\sdqc}
\newcommand\Hsdqc{H^\mathrm{rel}}
\newcommand{\Rnsym}{\R^{n\times n}_\sym}
\newcommand{\Rtrsym}{\R^{3\times 3}_\sym}
\newcommand\torus{{(0,1)^n}}
\newcommand\Rvierst{\R^{n^4}_*}
\newcommand\apot{\Theta}
\newcommand\bbA{\mathbb A}
\newcommand\Lin{\mathrm{Lin}}
\newcommand\loc{\mathrm{loc}}
\begin{document}

\title{Symmetric div-quasiconvexity and the relaxation of static problems}

\author[S.~Conti, S.~M\"uller and M.~Ortiz]
{S.~Conti$^1$, S.~M\"uller$^{1,2}$ and M.~Ortiz$^{1,2,3}$}

\address
{
  $^1$ Institut f\"ur Angewandte Mathematik, Universit\"at Bonn,
  Endenicher Allee 60,
  53115 Bonn, Germany.
}
\address
{
  $^2$ Hausdorff Center for Mathematics,
  Endenicher Allee 60,
  53115 Bonn, Germany.
}

\address
{
  $^3$ Division of Engineering and Applied Science,
  California Institute of Technology,
  1200 E.~California Blvd., Pasadena, CA 91125, USA.
}

\begin{abstract}
We consider problems of static equilibrium in which the primary unknown is the stress field and the solutions maximize a complementary energy subject to equilibrium constraints. A necessary and sufficient condition for the sequential lower-semicontinuity of such functionals is symmetric ${\rm div}$-quasiconvexity, a special case of Fonseca and M\"uller's $\calA$-quasiconvexity with $\calA = {\rm div}$ acting on $\Rnsym$. We specifically consider the example of the static problem of plastic limit analysis and seek to characterize its relaxation in the non-standard case of a non-convex elastic domain. We show that the symmetric ${\rm div}$-quasiconvex envelope of the elastic domain can be characterized explicitly for isotropic materials whose elastic domain depends on pressure $p$ and Mises effective shear stress $q$. The envelope then follows from a rank-$2$ hull construction in the $(p,q)$-plane. Remarkably, owing to the equilibrium constraint the relaxed elastic domain can still be strongly non-convex, which shows that convexity of the elastic domain is not a requirement for existence in plasticity.
\end{abstract}

\maketitle

\tableofcontents

\section{Introduction}

We consider problems of static equilibrium in which the primary unknown is the stress field and the solutions minimize a complementary energy subject to equilibrium constraints. Such problems arise, e.~g., in the limit analysis of solids at collapse, which is characterized by continuing deformations, or yielding, at constant applied loads \cite{Lubliner:1990}. In a geometrically linear framework, the elastic strains and the stress remain constant during collapse. Therefore, the plastic strain rate coincides with the total strain rate and is compatible. In addition, the stress is constrained to be in equilibrium and take values in the elastic domain $K$, which, for ideal plasticity and in the absence of hardening, is a fixed subset of $\Rnsym$. Static theory then aims to minimize over all possible velocities $v:\Omega\to\R^n$ compatible with the boundary data $g:\partial\Omega\to\R^n$, and maximize over all possible stress fields $\sigma:\Omega\to K$ in equilibrium, the plastic dissipation
\begin{equation}\label{4Adabr}
  \int_\Omega \sigma\cdot Dv \,dx.
\end{equation}
Natural spaces of functions are $\sigma\in L^\infty(\Omega;\Rnsym)$ with $\sigma\in K$ almost everywhere and $v\in W^{1,p}(\Omega;\R^n)$ with $v=\gD$ on $\partial\Omega$ in the sense of traces. If the elastic domain $K$ is convex, then the mathematical analysis of the problem is straightforward. Thus, the supremum of (\ref{4Adabr}) with respect to $\sigma$ can be taken locally, and the resulting dissipation functional
\begin{equation}\label{eqconvex}
  \int_\Omega \psi(Dv) \, dx
\end{equation}
can then be minimized over all admissible $v$. In (\ref{eqconvex}), $\psi(\xi):=\sup_{\sigma \in K} \sigma\cdot \xi$ is the dissipation potential. Thus, for convex $K$ the classical kinematic problem of limit analysis is recovered. The functional (\ref{eqconvex}) is itself convex and, for compact $K$, coercive, whence existence of minimizers follows by the direct method of the calculus of variations.

However, the elastic domain $K$ of some notable materials is not convex. An illustrative example is silica glass. Indeed, Meade and Jeanloz \cite{Meade1988} made measurements of the shear strength of amorphous silica at pressures up to $81$ GPa at room temperature and showed that the strength initially decreases sharply as the material is compressed to denser structures of higher coordination and then rises again, Fig.~\ref{yl7fiU}a, resulting in a strongly non-convex elastic domain in the pressure-shear stress plane. Several authors \cite{Maloney2008, Schill:2018} have performed molecular dynamics calculations of amorphous solids deforming in pressure-shear and have found that the resulting deformation field forms distinctive patterns to accommodate permanent macroscopic deformations, Fig.~\ref{yl7fiU}b. Remarkably, whereas convex limit analysis is standard \cite{Lubliner:1990}, the case of non-convex elastic domains does not appear to have been studied.

\begin{figure}[h]
\framebox{
\begin{minipage}{0.6\textwidth}
Images not included in the arXiv version for copyright reasons. Please refer to the original publications, mentioned below. 
\end{minipage}}
	\caption{a) Measurements of the shear yield strength of silica glass at pressures up to $81$ GPa at room temperature reveal a non-convex elastic domain in pressure-shear space \cite[Fig.~1]{Meade1988}.
	 Reprinted with permission from The American Association for the Advancement of Science.
 b) Molecular dynamics simulations of glass exhibit distinctive patterns in the deformation field \cite[Fig.~3]{Maloney2008}.
	\copyright\ IOP Publishing. Reproduced with permission. All rights reserved.} 	\label{yl7fiU}
\end{figure}

More generally, we may consider static problems where the material response is expressed as
\begin{equation}\label{stl2Vo}
  \epsilon = \frac{\partial\chi}{\partial\sigma}(x, \sigma) ,
\end{equation}
in terms of a complementary energy function $\chi$. {The} functional of interest is then the complementary energy
\begin{equation}\label{l88Opb}
  \sigma
  \mapsto
  \int_{\Gamma_D}
    \sigma(x) \nu(x) \cdot \gD(x)
  \, d\calH^{d-1}
  -
  \int_\Omega
    \chi(x, \sigma(x))
  \, dx ,
\end{equation}
to be minimized subject to the equilibrium constraints
\begin{subequations}\label{fRoa1l}
\begin{align}
  & \label{3U6iR0}
  {\rm div} \sigma(x) + b(x) = 0 ,
  &&
  \text{in } \Omega ,
  \\ &
  \sigma(x) \nu(x) = h(x) ,
  &&
  \text{on } \Gamma_N ,
\end{align}
\end{subequations}
where $\sigma : \Omega \to \mathbb{R}^{{n}\times {n}}$ is a local stress field, $b : \Omega \to \mathbb{R}^{n}$ are body forces and $h : \Gamma_N \to \mathbb{R}^{n}$ applied tractions over the Neumann boundary $\Gamma_N\subseteq\partial\Omega$. If $\chi$ is non-convex, the question of relaxation again becomes non-standard and it may be expected to result in the development of microstructure in the form of rapidly oscillatory stress fields.

A powerful mathematical tool for elucidating such questions is furnished by $\calA$-quasiconvexity, introduced by Fonseca and M\"uller \cite{FonsecaMueller1999} as a necessary and sufficient condition for the sequential lower-semicontinuity of functionals of the form
\begin{equation}
  (u,v)
  \mapsto
  \int_\Omega
    f(x, u(x), v(x))
  \, dx ,
\end{equation}
where $f : \Omega \times \mathbb{R}^m \times \mathbb{R}^d \to [0,+\infty)$ is a normal integrand, $\Omega \subseteq \mathbb{R}^n$ open and bounded, and $v$ must satisfy the differential constraint
\begin{equation}\label{Bb8Xd8}
  \calA \, v
  =
  0 .
\end{equation}
Here,
\begin{equation}\label{eqdefcalA}
  \calA \, v
  :=
  \sum_{i=1}^n A^{(i)} \frac{\partial v}{\partial x_i} ,
\end{equation}
and {$A^{(i)} \in {\rm Lin}(\mathbb{R}^l; \mathbb{R}^d)$} is a constant rank partial differential operator. Specifically, $f(x, u, \cdot)$ is $\calA$-quasiconvex if
\begin{equation}
  f(x, u, v)
  \leq
  \int_Q
    f(x, u, v + w(y))
  \, dy ,
\end{equation}
for all $v \in \mathbb{R}^d$ and all $w \in C^\infty(Q; \mathbb{R}^d)$ such that $\calA w = 0$ and $w$ is $Q$-periodic, with $Q = (0,1)^n$. In particular, with $\calA = {\rm curl}$, $\calA$-quasiconvexity reduces to Morrey's notion of quasiconvexity. In the context of the static problem (\ref{l88Opb}) and (\ref{fRoa1l}), we may identify the state field $v$ with $\sigma$ and the operative differential operator $\calA$ with ${\rm div}$. The pertinent notion of quasiconvexity is, therefore, {\sl ${\rm div}$-quasiconvexity}, acting on fields of symmetric $n\times n$ matrices. Whereas for kinematic problems of the energy-minimization type there is a well-developed theory of relaxation relating to ${\rm curl}$-quasiconvexity, the relaxation of static problems of the form (\ref{l88Opb}) and (\ref{fRoa1l}), relating instead to ${\rm div}$-quasiconvexity, has been less extensively studied.

In this paper, we develop a theory of symmetric ${\rm div}$-quasiconvex relaxation for static problems. For definiteness, we confine attention to the static problem of limit analysis \cite{Lubliner:1990}
\begin{equation}\label{eqvarpb1}
  \sup\{ F(\sigma): \sigma\in L^\infty(\Omega;K)\} .
\end{equation}
Here, $K \subseteq \mathbb{R}^{n\times n}_{\rm sym}$ is the elastic domain, which we assume to be compact, and
\begin{equation}\label{eqintrodefFsigma}
 F(\sigma):=\inf_v
  \Big\{
    \int_\Omega
      \sigma \cdot \nabla v
    \, dx
    \, : \, v\in W^{1,1}(\Omega;\R^n),
    \ v = \gD \text{ on } \partial\Omega
  \Big\} ,
\end{equation}
where $\gD\in L^1(\partial\Omega;\R^n)$ gives the boundary data. The domain $\Omega$ is assumed to be a bounded Lipschitz domain. The stress field $\sigma$ is a divergence-free field, which takes values in symmetric matrices. This symmetry sets the present setting apart from previous applications of $\div$-quasiconvexity, also denoted $\calS$-quasiconvexity or soleinoidal--quasi\-convexity, which have focused on 
{the characterization of the $\div$-quasiconvex hull of a 3-point set in relation with the three-well problem in linear elasticity
\cite{GarroniNesi2004,PalombaroPonsiglione2004,PalombaroSmyshlyaev2009}}
and on the Born-Infeld equations \cite{MuellerPalombaro2014}. We call the present setting {\sl symmetric $\div$-quasiconvexity}.

In Section \ref{secsdqc}, we show how the concept of symmetric $\divv$-quasiconvexity fits within the framework of $\calA$-quasiconvexity and discuss the relevant properties of \sdqclong\ functions, which mainly follow directly from \cite{FonsecaMueller1999}. We also present in Lemma \ref{lemmatartar} an important example of a nonconvex \sdqclong\ function. Section \ref{sec:relax} deals with $\divv$-quasiconvexity for sets and their hulls, in the context of relaxation theory. An important result, announced in \cite[Th. 1 and Th. 2]{Schill:2018}, is Theorem \ref{theoexist}, which shows that the variational problem (\ref{eqvarpb1}) has a solution if $K$ is \sdqclong. We then discuss, in particular, the definition of the \sdqclong\ hull of a set $K$, which in principle depends on the growth of the class of test functions employed. However, we show that all $p\in(1,\infty)$ give equivalent definitions, Theorem \ref{theorempq}. Finally, Section \ref{sec:example} deals with the important case of sets $K$ that can be characterized in terms of the first two stress invariants alone and show how their \sdqclong\ hulls can be explicitly characterized. We recall that this elastic domain representation is the basis for a broad range of pressure-dependent plasticity models, including the Mohr-Coulomb model of sands (\cite{Lubliner:1990} and references therein), the Cam-Clay model of soils (\cite{Schofield:1968}  and references therein), the Drucker-Prager model of pressure-dependent metal plasticity (\cite{Lubliner:1990} and references therein) and Gurson's model of porous metal plasticity \cite{Gurson:1977}.

\section{Symmetric $\divv$-quasiconvex functions}
\label{secsdqc}

We start by giving the basic definitions and recalling the main results from \cite{FonsecaMueller1999}, specializing them to the case of interest here. \begin{definition} A Borel-measurable, locally bounded function $f:\R^{n\times n}_\sym\to\R$ is symmetric $\div$-quasiconvex if, for all $\varphi\in C^\infty_\per((0,1)^n;\Rnsym)$ which obey $\div \varphi=0$ everywhere,
\begin{equation}\label{eqdefsdqc}
 f\bigl(\int_{(0,1)^n} \varphi\, dx\bigr)\le \int_{(0,1)^n} f(\varphi) dx\,.
\end{equation}
For $\xi\in\R^{n\times n}_\sym$, the \sdqclong\ envelope of $f:\Rnsym\to \R$ is defined as
\begin{equation}\label{eqdefqsdf}
\begin{split}
   \Qsd f(\xi):=\inf\left\{ \int_{(0,1)^n} f(\varphi)dx:\right. &\varphi\in C^\infty_\per((0,1)^n;\R^{n\times n}_\sym), \\
  &\left.\Div\varphi=0, \int_{(0,1)^n} \varphi\, dx =\xi\right\}.
\end{split}
\end{equation}
\end{definition}
We recall that $C^\infty_\per((0,1)^n)$ is the set of $\varphi\in C^\infty(\R^n)$ such that $\varphi(x+e_i) = \varphi(x)$ for $i=1,\dots,n$.

\begin{remark}
From the definition it follows that, if $f,g$ are \sdqclong, then so are $\max\{f,g\}$ and $f+\lambda g$, for any $\lambda\in[0,\infty)$. Furthermore, all convex functions are \sdqclong.
\end{remark}

For a generic first-order differential operator of the form given in (\ref{eqdefcalA}) and a wavevector $w\in\R^n\setminus\{0\}$, the linear operator $\bbA(w)\in \Lin(\R^m;\R^n)$ is defined as
\begin{equation}
 \bbA(w):= \sum_{i=1}^n A^{(i)} w_i
\end{equation}
The general theory of $\calA$-quasiconvexity requires that $\bbA$ be constant rank, in the sense that $\rank\bbA$ does not depend on $w$ (as long as $w\ne 0$). We first show that this condition holds in the present case and compute the characteristic cone. We recall that the characteristic cone is the union of the sets where $\bbA(w)$ vanishes, for $w\ne 0$, and that \sdqclong\ functions are convex in the directions of the characteristic cone.
\begin{lemma}\label{lemmaconstantrank}
The condition of being divergence-free is constant rank on symmetric $n\times n$ matrices. The characteristic cone consists of all non-invertible matrices and spans $\Rnsym$.
\end{lemma}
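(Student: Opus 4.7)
The plan is to view the symmetric $\divv$ operator in the $\calA$-quasiconvexity framework of \cite{FonsecaMueller1999} and reduce everything to elementary linear algebra on $\Rnsym$. Writing $\sigma : \Omega \to \Rnsym$ componentwise, one has $(\divv\sigma)_j = \sum_i \partial_i \sigma_{ji}$, so the coefficient maps $A^{(i)} : \Rnsym \to \R^n$ can be taken as $A^{(i)} \sigma = \sigma e_i$. Consequently, for any $w\in\R^n$,
\begin{equation*}
   \bbA(w)\sigma = \sum_{i=1}^n A^{(i)}\sigma\, w_i = \sigma w \in\R^n.
\end{equation*}

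First I would establish the constant-rank property by showing that $\bbA(w) : \Rnsym \to \R^n$ is surjective for every $w \ne 0$. Given an arbitrary $y\in\R^n$, the symmetric matrix
\begin{equation*}
 \sigma = \frac{y w^T + w y^T}{|w|^2} - \frac{w\cdot y}{|w|^4}\, w w^T
\end{equation*}
satisfies $\sigma w = y$ by direct computation. Hence $\rank\bbA(w)=n$ independently of $w\in\R^n\setminus\{0\}$.

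Next, the characteristic cone is by definition $\bigcup_{w\ne 0}\ker\bbA(w)$, i.e.\ the set of $\sigma\in\Rnsym$ for which there is some $w\ne 0$ with $\sigma w=0$. This is exactly the set of symmetric matrices with $\det\sigma=0$, i.e.\ the non-invertible symmetric matrices. Finally, to see that this cone spans $\Rnsym$, I would use that every rank-one symmetric matrix $a a^T$ is singular (for $n\ge 2$) and the identity
\begin{equation*}
  e_i e_j^T + e_j e_i^T = (e_i+e_j)(e_i+e_j)^T - e_i e_i^T - e_j e_j^T
\end{equation*}
to write each basis matrix of $\Rnsym$ as a linear combination of singular symmetric matrices.

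There is no real obstacle here: everything reduces to the single observation $\bbA(w)\sigma=\sigma w$ and to standard symmetric-matrix linear algebra. The only point requiring mild care is the explicit surjectivity construction, but the formula above settles it in one line.
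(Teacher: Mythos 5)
Your argument is correct and follows essentially the same route as the paper: you identify the symbol as $\bbA(w)\sigma=\sigma w$, prove surjectivity for $w\ne0$ by an explicit formula (your matrix is just the $|w|$-normalized version of the paper's $F^{v,w}=v\otimes w+w\otimes v-(v\cdot w)\,w\otimes w$), and read off the characteristic cone as the singular symmetric matrices; the paper merely makes the identification of $\Rnsym$ with $\R^{n(n+1)/2}$ explicit via a map $J$ to fit the Fonseca--M\"uller formalism. Your explicit rank-one decomposition for the spanning claim is a small addition the paper leaves implicit, and it is fine.
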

\begin{proof}
Let $J:\R^{n(n+1)/2}\to\R^{n\times n}_\sym$ be a linear bijection which maps $\{e_1\dots e_{n(n+1)/2}\}$ to $\{e_i\odot e_j\}_{1\le i\le j\le n}$. We recall that $(a\odot b)_{ij}:=\frac12 (a_ib_j+a_jb_i)$. We define the differential operator $\calA^\sdiv$ on $C^\infty(\Omega;\R^{n(n+1)/2})$ as $\calA^\sdiv \varphi := \div (J\varphi)$. The corresponding linear operator $\bbA^\sdiv(w)\in \Lin(\R^{n(n+1)/2};\R^n)$, for $w\in \R^n$, is defined by its action on a vector $\xi\in \R^{n(n+1)/2}$,
\begin{equation}
 (\bbA^\sdiv(w)\xi)_i = \sum_{j=1}^n (J\xi)_{ij}w_j,
\end{equation}
which can be written as $\bbA^\sdiv(w)\xi = (J\xi)w$.

For example, for $n=2$,
\begin{equation}
 J\begin{pmatrix}
  \xi_1\\ \xi_2\\ \xi_3
 \end{pmatrix}
=\begin{pmatrix}
 \xi_1 & \frac12\xi_3 \\ \frac12\xi_3 & \xi_2
 \end{pmatrix}
\end{equation}
and
\begin{equation}
\begin{split}
\calA^\sdiv \begin{pmatrix}
  \varphi_1\\ \varphi_2\\ \varphi_3
 \end{pmatrix} = \begin{pmatrix}
 \partial_1\varphi_1 + \frac12 \partial_2\varphi_3\\
 \partial_2\varphi_2 + \frac12 \partial_1\varphi_3
 \end{pmatrix},
 \\
 \bbA^\sdiv\begin{pmatrix}
  w_1\\ w_2
 \end{pmatrix}
 \begin{pmatrix} \xi_1\\ \xi_2\\ \xi_3\end{pmatrix}
 = \begin{pmatrix}
 w_1\xi_1 + \frac12 w_2\xi_3\\
 w_2\xi_2 + \frac12 w_1\xi_3
 \end{pmatrix}.
 \end{split}
\end{equation}

We now show that the operator $\bbA^\sdiv(w)$ is surjective for every $w\in S^{n-1}$. Indeed, fix any vector $v\in\R^n$ and let $F^{v,w}\in\R^{n\times n}_\sym$ be such that $F^{v,w}w=v$ (for example, let $F^{v,w}=v\otimes w + w\otimes v - (v\cdot w) w\otimes w$). Then, choose $\xi:=J^{-1}(F^{v,w})$ to obtain $\bbA^\sdiv(w)J^{-1}(F^{v,w}) = F^{v,w}w=v$. Therefore, $\bbA^\sdiv(w)$ has rank $n$ for all $w\ne0$, and the constant-rank condition holds.

The characteristic cone, first introduced by Murat and Tartar \cite{Murat1981,Tartar1979}, is defined as
\begin{equation}
 \Lambda := \bigcup_{w\in S^{n-1}} \ker \bbA^\sdiv(w) \subseteq \R^{n(n+1)/2}.
\end{equation}
{In the present context, the cone $\Lambda$ may be identified (via the mapping $J$) with the set of 
non-invertible matrices,}
\begin{equation}\label{eqcharcone}
 J\Lambda = \bigcup_{w\in S^{n-1}} \{\sigma \in \R^{n\times n}_\sym: \sigma w =0\} = \{\sigma \in \R^{n\times n}_\sym: \det\sigma=0\}.
\end{equation}
\end{proof}

The following three results are essentially special cases of more general assertions that hold within the framework of $\calA$-quasiconvexity in \cite{FonsecaMueller1999}. For convenience, we restate here the statements that are {needed} in the following.

\begin{lemma}\label{lemmalambdaconvex}
Let $f$ be \sdqclong. Then, it is convex along all non-invertible directions, in the sense that $f(\lambda A + (1-\lambda)B)\le \lambda f(A)+(1-\lambda) f(B)$ whenever $\lambda\in[0,1]$, $A,B\in\Rnsym$, $\det (A-B)=0$. Furthermore, all such $f$ are locally Lipschitz continuous.
\end{lemma}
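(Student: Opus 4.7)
The plan is to follow the standard route for $\calA$-quasiconvex functions as in \cite{FonsecaMueller1999}: a simple laminate in a characteristic direction yields the convexity along non-invertible lines, and combining this with the fact from Lemma~\ref{lemmaconstantrank} that such directions span $\Rnsym$ forces local Lipschitz continuity via the classical one-dimensional theory of convex functions.

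For the $\Lambda$-convexity, fix $A,B\in\Rnsym$ with $\det(A-B)=0$ and $\lambda\in(0,1)$. By Lemma~\ref{lemmaconstantrank} there exists $w\in S^{n-1}$ with $(A-B)w=0$. First I would build a smooth $1$-periodic function $\chi_\epsilon\colon\R\to[0,1]$ equal to $1$ on an interval of length $\lambda-O(\epsilon)$, equal to $0$ on an interval of length $(1-\lambda)-O(\epsilon)$, and with $\int_0^1\chi_\epsilon=\lambda$, for instance by mollifying the $1$-periodic indicator of $[0,\lambda)$ with a symmetric mollifier. The field
\[
\sigma_\epsilon(x):=B+\chi_\epsilon(x\cdot w)\,(A-B)
\]
is smooth, satisfies $\div\sigma_\epsilon=\chi'_\epsilon(x\cdot w)\,(A-B)w=0$, takes values in the compact segment joining $B$ and $A$, and has mean $\lambda A+(1-\lambda)B$. (If $w$ is irrational, the same argument goes through after rescaling the period cell or, equivalently, passing to the compactly supported formulation of sdqc, which is known to agree with the periodic one within the Fonseca--M\"uller framework.) Applying \eqref{eqdefsdqc} and splitting the integral into the plateaus, contributing $(\lambda-O(\epsilon))f(A)+((1-\lambda)-O(\epsilon))f(B)$, and the transition region of total measure $O(\epsilon)$, on which the integrand is controlled by the local $L^\infty$ bound of $f$ on the segment $[B,A]$, and letting $\epsilon\to 0$ gives the required inequality.

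For the local Lipschitz bound, I would pick a basis $\{M_1,\dots,M_N\}$ of $\Rnsym$ with $N=n(n+1)/2$, consisting of non-invertible symmetric matrices, which exists by Lemma~\ref{lemmaconstantrank}. By Step~1, for every $\xi\in\Rnsym$ each one-dimensional slice $t\mapsto f(\xi+tM_k)$ is convex and locally bounded on $\R$, hence locally Lipschitz with slope controlled by the sup-norm of $f$ on a neighborhood. Any small perturbation $\eta$ decomposes as $\eta=\sum_k c_kM_k$ with $|c_k|\le C|\eta|$ for a constant $C$ depending only on the basis; chaining the one-dimensional slope bounds along the successive vectors $c_1M_1,\dots,c_NM_N$ yields $|f(\xi+\eta)-f(\xi)|\le L|\eta|$ uniformly on any bounded set, with $L$ controlled by the local $L^\infty$ bound of $f$.

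The main delicate point is Step~1. The natural laminate uses the discontinuous characteristic function, while \eqref{eqdefsdqc} requires $C^\infty$ test fields, so one must smooth the characteristic and control the error in the transition region. The resolution is standard but relies essentially on local boundedness of $f$, which guarantees that the transition contributes only $O(\epsilon)$. The periodicity issue when $w$ is irrational is a minor technical nuisance handled routinely within the Fonseca--M\"uller framework. Once Step~1 is in hand, Step~2 is a routine one-variable convexity computation, made possible by the spanning property of the characteristic cone established in Lemma~\ref{lemmaconstantrank}.
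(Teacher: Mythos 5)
Your overall strategy — build a one-dimensional laminate oscillating in a characteristic direction, mollify so that it becomes an admissible test field in \eqref{eqdefsdqc}, deduce one-dimensional convexity along singular directions, and then exploit that singular symmetric matrices span $\Rnsym$ to get separate convexity and hence local Lipschitz continuity — is precisely the paper's strategy, and your chaining argument in Step 2 is a correct expansion of the paper's one-line conclusion. The place where you diverge, and where your argument has a real gap, is the treatment of irrational characteristic directions.

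Your field $\sigma_\epsilon(x) := B + \chi_\epsilon(x\cdot w)(A-B)$ is $1$-periodic in the $w$-direction and constant perpendicular to it; for a generic rational $w$ it is not $(0,1)^n$-periodic (the paper inserts a factor $M\in\N$ with $M\nu\in\Z^n$ and uses $h(Mx\cdot\nu)$ exactly to repair this), and for irrational $w$ it is not periodic with respect to \emph{any} lattice. Your two suggested patches do not close this. ``Rescaling the period cell'' only produces an admissible period cell when $w$ is commensurate with $\Z^n$, i.e. rational. ``Passing to the compactly supported formulation'' does not apply here because the laminate is a plane wave, not compactly supported; and independently, the equivalence of the periodic and compactly-supported definitions of symmetric $\div$-quasiconvexity is not a result you can simply quote from \cite{FonsecaMueller1999} (the argument that extends a compactly supported $\div$-free field periodically is easy, but the converse direction, which is what you would actually need, is the delicate one and is not proved there).

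The paper resolves this with a cleaner two-step structure that you should adopt: first prove the inequality only for rational annihilating vectors $\nu\in\Q^n$, where the periodization is unproblematic; observe that this alone already yields separate convexity and therefore finiteness and local Lipschitz continuity of $f$; and only then handle a general $w\in S^{n-1}$ with $(A-B)w=0$ by approximating $w$ with rationals $\nu_j\to w$, replacing $B$ by a nearby matrix $B_j$ (built so that $(A-B_j)\nu_j=0$ and $B_j\to B$), applying the rational-case inequality to the pair $(A,B_j)$, and passing to the limit using the continuity of $f$ established in the first step. Once this two-step structure replaces your parenthetical, the rest of your argument — including the mollified plateau construction and the slope-chaining in Step 2 — is sound.
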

\begin{proof}
If $f$ is upper semicontinuous, then the assertion follows directly from \cite[Prop. 3.4]{FonsecaMueller1999} using Lemma \ref{lemmaconstantrank}. Here, we give a direct proof without assuming upper semicontinuity.

We first assume that there is a vector $\nu\in \Q^{n}\setminus\{0\}$ such that $(A-B)\nu=0$. We let $h:\R\to\{0,1\}$ be one-periodic, with $h(t)=0$ for $t\in(0,\lambda)$ and $h(t)=1$ for $t\in (\lambda,1)$. We choose $M\in\N$ such that $M\nu\in\Z^n$ and define $u(x):=A+(B-A) h(Mx\cdot \nu)$. From $Me_i\cdot\nu=M\nu_i\in\Z$, we deduce that $u(x+e_i)=u(x)$ for all $i$. Furthermore, $\div u=0$ {in the sense of distributions}, $|\{u=A\}\cap(0,1)^n|=\lambda$, and $|\{u=B\}\cap(0,1)^n|=1-\lambda$, which {implies} $\int_{(0,1)^n} u \, dx = \lambda A + (1-\lambda) B$.

Let $\theta_\eps\in C^\infty_c(B_\eps)$ be a mollifier. Then, $u\ast \theta_\eps\in C^\infty_\per((0,1)^n;\Rnsym)$ and, therefore, by (\ref{eqdefsdqc}), we obtain
\begin{equation}
  f(\lambda A +(1-\lambda)B)\le \int_{(0,1)^n} f(u\ast \theta_\eps) dx.
\end{equation}
Since $f$ is locally bounded, $u$ is bounded and $|\{u\ast\theta_\eps\ne u\}\cap(0,1)^n|\to0$. Taking the limit $\eps\to0$, we deduce
\begin{equation}
  f(\lambda A+(1-\lambda) B)\le \int_{(0,1)^n} f(u)\, dx =\lambda f(A)+(1-\lambda) f(B)
\end{equation}
whenever $A$ and $B$ are such that $(A-B)\nu=0$ for some $\nu\in \Q^n$. In particular, $f$ is separately convex and finite-valued, hence locally Lipschitz continuous.

Consider now any two matrices $A,B$ and a vector $w\in S^{n-1}$ such that $(A-B)w=0$. We choose $\nu_j\in \Q^n$ such that $\nu_j\to w$, which implies $(A-B)\nu_j\to0$. Let now $B_j:=B+(A-B)\nu_j\otimes \nu_j/|\nu_j|^2$. Then, $(A-B_j)\nu_j=0$, hence $f(\lambda A+(1-\lambda)B_j)\le \lambda f(A)+(1-\lambda) f(B_j)$. Taking $j\to\infty$, by continuity of $f$ we conclude the proof.
\end{proof}

\begin{lemma}\label{lemmalowersem}\label{lowersem}
\begin{enumerate}
\item \label{lowersem1}
Let $f$ be \sdqclong, $u_j\weakstarto u$ weakly in $L^\infty(\Omega;\Rnsym)$, $\div u_j=0$ {in the sense of distributions}. Then,
\begin{equation}
\int_\Omega f(u(x))dx\le \liminf_{j\to\infty} \int_\Omega f(u_j(x))dx.
\end{equation}
\item
Let $f$ be \sdqclong, $f(\xi)\le c(|\xi|^p+1)$ for some $p\in[1,\infty)$, $u_j\weakto u$ weakly in $L^p(\Omega;\Rnsym)$, $\div u_j=0$ {in the sense of distributions}. Then,
\begin{equation}
\int_\Omega f(u(x))dx\le \liminf_{j\to\infty} \int_\Omega f(u_j(x))dx.
\end{equation}
\end{enumerate}
\end{lemma}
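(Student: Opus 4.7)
The plan is to deduce both parts directly from the general sequential lower-semicontinuity theorem of Fonseca and M\"uller \cite[Thm.~3.7]{FonsecaMueller1999} for $\calA$-quasiconvex integrands, applied to the operator $\calA^\sdiv$ introduced in Lemma \ref{lemmaconstantrank}. The two abstract hypotheses of that theorem are the constant-rank condition on the differential operator, verified for $\calA^\sdiv$ in Lemma \ref{lemmaconstantrank}, and continuity of the integrand, supplied by Lemma \ref{lemmalambdaconvex}. The linear bijection $J:\R^{n(n+1)/2}\to\Rnsym$ transfers any \sdqclong\ $f$ into an $\calA^\sdiv$-quasiconvex function $f\circ J$ on $\R^{n(n+1)/2}$, and carries divergence-free sequences $(u_j)$ to $\calA^\sdiv$-free sequences $(J^{-1}u_j)$. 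Since $\div u_j=0$ exactly, the ``compactness in $W^{-1,p}_\loc$'' condition on $\calA^\sdiv(J^{-1}u_j)$ required by Fonseca--M\"uller is trivially satisfied.

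For part (ii) I would apply \cite[Thm.~3.7]{FonsecaMueller1999} directly: $f\circ J$ inherits from $f$ the $\calA^\sdiv$-quasiconvexity and the $p$-growth bound, and is continuous by Lemma \ref{lemmalambdaconvex}; combined with the weak $L^p$ convergence and vanishing divergence of $(u_j)$, the hypotheses of that theorem are met and deliver the stated inequality.

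For part (i) I would reduce to part (ii). The weak-$\ast$ $L^\infty$ convergence yields a uniform bound $\|u_j\|_\infty\le M$ (and hence $\|u\|_\infty\le M$), and since $\Omega$ is bounded one also has $u_j\weakto u$ in $L^2(\Omega;\Rnsym)$. Because the essential range of the sequence and of the limit lies in the compact set $\{|\xi|\le M\}\subset\Rnsym$, only the values of $f$ there are relevant to the integrals. I would therefore replace $f$ by a modified integrand $\tilde f$ that agrees with $f$ on $\{|\xi|\le M\}$, is \sdqclong, and has (say) quadratic growth at infinity, and then apply part (ii) to $\tilde f$ with $p=2$; the equalities $\int f(u_j)\,dx=\int\tilde f(u_j)\,dx$ and $\int f(u)\,dx=\int\tilde f(u)\,dx$ hold because only values in $\{|\xi|\le M\}$ are visited.

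I expect the main technical obstacle to be precisely the construction of the truncation $\tilde f$: it must be simultaneously \sdqclong, equal to $f$ on $\{|\xi|\le M\}$, and of controlled growth at infinity, which is delicate since neither pointwise minimum with an sdqc function nor multiplication by a smooth cutoff preserves \sdqclong ness in general. A natural candidate is $\tilde f := \Qsd \phi$ for a continuous extension $\phi$ of $f|_{\{|\xi|\le M\}}$ that agrees with $f$ on this ball and grows quadratically outside; one must then verify that $\Qsd\phi$ coincides with $f$ on $\{|\xi|\le M\}$, which rests on the \sdqclong ness of $f$ combined with a choice of $\phi$ dominating $f$ where needed.
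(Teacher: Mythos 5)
For part (ii) your proof coincides with the paper's: cite \cite[Th.~3.7]{FonsecaMueller1999}, with the constant-rank hypothesis from Lemma~\ref{lemmaconstantrank} and the continuity from Lemma~\ref{lemmalambdaconvex}. The paper, however, treats part (i) by exactly the same one-line citation, with no truncation step. The point is that Fonseca--M\"uller's lower-semicontinuity machinery for $L^\infty$-bounded, weakly-$*$ converging, $\calA$-free sequences rests on the characterization of the associated Young measures, which are supported in a fixed compact ball $\{|\xi|\le M\}$; there the only regularity of $f$ used is continuity on that ball, and no growth condition at infinity enters. This is precisely why the paper needs (and establishes) local Lipschitz continuity in Lemma~\ref{lemmalambdaconvex} before quoting the theorem. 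Your reduction of (i) to (ii) is therefore an unnecessary detour, and as formulated it also has a genuine gap.

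The gap is in the construction of $\tilde f$. You propose $\tilde f:=\Qsd\phi$ for a continuous extension $\phi$ of $f$ from $\{|\xi|\le M\}$ with quadratic growth, and you correctly identify that the delicate point is showing $\Qsd\phi=f$ on $\{|\xi|\le M\}$. The only monotonicity available is that $\phi\ge f$ \emph{everywhere} implies $\Qsd\phi\ge\Qsd f=f$. But a locally bounded \sdqclong\ $f$ can grow arbitrarily fast at infinity (nothing in part (i) restricts its growth), in which case no $\phi$ of polynomial growth dominates $f$, and $\Qsd\phi(\xi)$ may drop strictly below $f(\xi)$ even for $|\xi|\le M$: the test fields $\varphi$ in the infimum defining $\Qsd\phi(\xi)$ are allowed to take values far outside $\{|\xi|\le M\}$, where $\phi<f$. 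Repairing this would require a Zhang-type truncation of the admissible test fields $\varphi$ (so as to confine them to a bounded ball without changing the infimum), which is exactly the nontrivial machinery developed later in the paper, in Lemmas~\ref{lemmapotentials} and~\ref{lemmatrunc}, for a different purpose; your proposal does not invoke it. The simpler and intended route for (i) is to observe that the uniform $L^\infty$ bound already localizes the problem to a compact set of matrix values, whence Fonseca--M\"uller's theorem applies with no growth hypothesis on~$f$.
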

\begin{proof}
Lemma \ref{lemmalambdaconvex} shows that $f$ is continuous. The result follows then immediately from \cite[Th. 3.7]{FonsecaMueller1999} using Lemma \ref{lemmaconstantrank}.
\end{proof}

\begin{lemma}\label{lemmaqsdqc}
Let $f\in C^0(\Rnsym;[0,\infty)$. Then, $\Qsd f$ is \sdqclong.
\end{lemma}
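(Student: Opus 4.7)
The plan is to invoke the general $\calA$-quasiconvex envelope theory of Fonseca and M\"uller. By Lemma~\ref{lemmaconstantrank} the differential operator $\calA^\sdiv$ has constant rank, so the setting falls within the framework of \cite{FonsecaMueller1999}, and the general fact that the $\calA$-quasiconvex envelope of a continuous nonnegative integrand is itself $\calA$-quasiconvex translates directly into the claim that $\Qsd f$ is \sdqclong.

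For completeness I sketch the underlying construction. Fix $\psi \in C^\infty_\per((0,1)^n;\Rnsym)$ with $\div\psi = 0$ and write $\bar\xi := \int_{(0,1)^n}\psi\,dx$; the aim is to show
\begin{equation*}
 \Qsd f(\bar\xi) \le \int_{(0,1)^n} \Qsd f(\psi(x))\,dx.
\end{equation*}
First, $\Qsd f$ is locally bounded from (\ref{eqdefqsdf}), and convex along directions of the characteristic cone (\ref{eqcharcone}) by the one-periodic laminate argument used in the proof of Lemma~\ref{lemmalambdaconvex}; since by Lemma~\ref{lemmaconstantrank} that cone spans $\Rnsym$, $\Qsd f$ is locally Lipschitz, so the right-hand side is meaningful. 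Next, partition $(0,1)^n$ into small cubes $Q_i$ of side $\eta$ centered at $x_i$; on each $Q_i$ select an $\eta$-optimal competitor $\varphi_i \in C^\infty_\per((0,1)^n;\Rnsym)$, divergence-free with mean $\psi(x_i)$ and $\int_{(0,1)^n} f(\varphi_i)\,dy \le \Qsd f(\psi(x_i)) + \eta$, and patch suitably rescaled high-frequency copies of the $\varphi_i$ on the respective $Q_i$ into a divergence-free, $(0,1)^n$-periodic test field $\Phi_k$ of mean $\bar\xi$. Passing to the limit as the frequency $k\to\infty$ and then $\eta\to 0$, the continuity of $f$ and of $\Qsd f$ yields $\int_{(0,1)^n} f(\Phi_k)\,dx \to \int_{(0,1)^n} \Qsd f(\psi(x))\,dx$, and admissibility of each $\Phi_k$ in (\ref{eqdefqsdf}) gives the desired inequality.

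The principal obstacle is the patching step: the rescaled $\varphi_i$'s match neither $\psi$ nor each other across $\partial Q_i$, so naive cutoffs destroy the divergence-free condition. This is the point at which one invokes the projection onto $\calA^\sdiv$-free fields that is available under the constant-rank hypothesis of Lemma~\ref{lemmaconstantrank}, together with the preliminaries of \cite{FonsecaMueller1999}: it converts an asymptotically divergence-free, asymptotically periodic candidate into an exactly admissible one at the cost of an $L^p$ error that vanishes in the limit, which by continuity of $f$ produces a negligible change in energy.
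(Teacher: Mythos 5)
Your proof is correct and takes essentially the same route as the paper, which establishes Lemma~\ref{lemmaqsdqc} by simply citing \cite[Prop.~3.4]{FonsecaMueller1999} (with the constant-rank hypothesis supplied by Lemma~\ref{lemmaconstantrank}). Your sketch of the underlying patching-and-projection mechanism is a faithful account of what that reference does and merely spells out detail the paper delegates to the citation.
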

\begin{proof}
Follows from \cite[Prop. 3.4]{FonsecaMueller1999}.
\end{proof}

We now recall an important example of a nontrivial \sdqclong\ function, due to Luc Tartar.
\def\fT{f_\mathrm{T}}
\begin{lemma}[From \cite{Tartar1985}]\label{lemmatartar}
The function $\fT:\Rnsym\to\R$, $\fT(\sigma):=(n-1)|\sigma|^2-(\Tr\sigma)^2$, is symmetric $\div$-quasiconvex.
\end{lemma}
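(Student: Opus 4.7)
I would split the average and use Fourier analysis to reduce the quasiconvexity inequality to a linear-algebraic inequality on each Fourier mode.

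Fix $\varphi\in C^\infty_\per((0,1)^n;\Rnsym)$ with $\div\varphi=0$, and set $\xi:=\int_{(0,1)^n}\varphi\,dx$, so $\psi:=\varphi-\xi$ has zero mean. Since $f_T$ is a quadratic form, direct expansion gives
\begin{equation*}
  \int_{(0,1)^n} f_T(\varphi)\,dx - f_T(\xi)
  = (n-1)\int_{(0,1)^n} |\psi|^2\,dx - \int_{(0,1)^n}(\Tr\psi)^2\,dx,
\end{equation*}
the cross-terms vanishing because $\int\psi\,dx=0$. The claim therefore reduces to the Poincar\'e-type inequality
\begin{equation*}
  \int_{(0,1)^n}(\Tr\psi)^2\,dx \;\le\; (n-1)\int_{(0,1)^n}|\psi|^2\,dx
\end{equation*}
for every zero-mean, periodic, symmetric-matrix-valued, divergence-free $\psi$.

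To prove this, expand $\psi$ in Fourier series $\psi(x)=\sum_{k\in\Z^n\setminus\{0\}}\hat\psi(k)e^{2\pi i k\cdot x}$. The constraint $\div\psi=0$ is equivalent to $\hat\psi(k)k=0$ for every $k\ne 0$, and by Plancherel it suffices to show that for every $k\in\Z^n\setminus\{0\}$ and every symmetric (complex) matrix $M\in\C^{n\times n}$ with $Mk=0$ one has
\begin{equation*}
  |\Tr M|^2 \le (n-1)|M|^2.
\end{equation*}

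After an orthogonal change of coordinates it suffices to treat $k=e_n$, in which case the symmetry of $M$ together with $Me_n=0$ forces the $n$-th row and column of $M$ to vanish. Writing $M=A+iB$ with $A,B$ real symmetric $(n-1)\times(n-1)$ matrices (embedded into $\Rnsym$) gives $|M|^2=|A|^2+|B|^2$ and $|\Tr M|^2=(\Tr A)^2+(\Tr B)^2$, and the Cauchy--Schwarz inequality applied to the $n-1$ eigenvalues of $A$ (and of $B$) yields $(\Tr A)^2\le(n-1)|A|^2$ and similarly for $B$. Adding the two bounds gives the claimed pointwise estimate on Fourier modes, and summing in $k$ completes the proof.

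The only real content is the linear-algebraic step on $M$; everything else is bookkeeping. The mildly technical point to be careful about is that the Fourier coefficients are complex, so the inequality must be phrased and verified for complex symmetric matrices with a real vector in their kernel, which the decomposition $M=A+iB$ reduces to the real case.
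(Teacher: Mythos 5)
Your proposal is correct and follows essentially the same route as the paper: expand quadratically about the mean, pass to Fourier coefficients via Plancherel, and verify the modewise inequality $|\Tr M|^2\le (n-1)|M|^2$ for coefficients annihilating the frequency vector. The only (immaterial) difference is in the linear-algebra step: the paper invokes the general bound $(\rank A)\,|A|^2\ge|\Tr A|^2$ for complex matrices together with $\rank\hat\varphi_\lambda\le n-1$, whereas you exploit symmetry to kill the $n$-th row and column and then apply Cauchy--Schwarz to the real and imaginary parts separately.
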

For completeness, we provide a short proof of this result, which plays an important role in the explicit examples discussed in Section \ref{sec:example}.
\begin{proof}
We first observe that, for any matrix $A\in \C^{n\times n}$, we have
\begin{equation}\label{eqrankAAtrA}
(\rank A) |A|^2 \ge |\Tr A|^2.
\end{equation}
To verify this inequality, it suffices to write $A$ in a basis in which only the first $\rank A$ diagonal entries are nonzero and to use then on this set the basic inquality $|\sum_i A_{ii}|^2\le (\rank A) \sum_i |A_{ii}|^2$. We now show that for any $\varphi \in C^1_\per ((0,1)^n;\R^{n\times n})$ with $\div \varphi=0$ the functional $I(\varphi):=\int_{(0,1)^n} \fT(\varphi(x))dx$ is nonnegative. Indeed, letting $\hat \varphi_\lambda$ be the Fourier coefficients of $\varphi$, by Plancharel's theorem we have
\begin{equation}
\int_{(0,1)^n} \fT(\varphi)\, dx=\sum_{\lambda\in 2\pi \Z^n} \left[(n-1) |\hat\varphi_\lambda|^2 - |\Tr\hat\varphi_\lambda|^2\right]\ge0 ,
 \end{equation}
where we have used (\ref{eqrankAAtrA}) and the fact that $\div\varphi=0$ implies $\hat\varphi_\lambda\lambda = 0$ and therefore $\rank\hat\varphi_\lambda\le n-1$. Let now $\varphi$ be as in the definition of $\div$-quasiconvexity, $\xi:=\int_{(0,1)^n} \varphi \, dx$. Since $\fT$ is quadratic and $\varphi-\xi$ has average zero, expanding we obtain
\begin{equation}
 \int_{(0,1)^n} \fT(\varphi) dx = \fT(\xi) +\int_{(0,1)^n} \fT(\varphi-\xi) dx \ge \fT(\xi).
 \end{equation}
\end{proof}

We close this section with a brief discussion of the relation to $\div$-quasiconvexity. In particular, we show that symmetric $\div$-quasiconvexity is not equivalent to $\div$-quasiconvexity composed with projection to symmetric matrices. We recall that a Borel-measurable, locally bounded function $f:\R^{m\times n}\to\R$ is $\div$-quasiconvex if, for every $\varphi\in C^\infty_\per((0,1)^n;\R^{m\times n})$ such that $\div \varphi=0$ everywhere,
\begin{equation}
f(\int_{(0,1)^n} \varphi\,dx)\le \int_{(0,1)^n} f(\varphi) dx\,.
\end{equation}
\newcommand\symf{\mathcal{S}f}
\begin{lemma}
For a given function $f:\R^{n\times n}_\sym\to\R$, we define $\symf:\R^{n\times n}\to\R$ as $\symf(\xi) := f((\xi+\xi^T)/2)$. If $\symf$ is $\div$-quasiconvex, then $f$ is symmetric $\div$-quasiconvex. However, there are symmetric $\div$-quasiconvex functions $f$ such that the corresponding $\symf$ is not $\div$-quasiconvex.
\end{lemma}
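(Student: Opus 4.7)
The forward direction is a direct restriction: any $\varphi \in C^\infty_\per((0,1)^n;\Rnsym)$ with $\div\varphi=0$ is automatically a divergence-free periodic $\R^{n\times n}$-valued test field, and since $\varphi(x)$ (and hence $\int_{(0,1)^n}\varphi\,dx$) is pointwise symmetric, $\symf(\varphi(x))=f(\varphi(x))$ and $\symf(\int\varphi)=f(\int\varphi)$. The div-quasiconvexity inequality for $\symf$ tested against such a $\varphi$ is therefore literally the symmetric div-quasiconvexity inequality for $f$.

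For the second statement I would take $n=2$ and $f(\sigma)=\det\sigma$ on $\R^{2\times 2}_\sym$. This $f$ is sym-div-quasiconvex because $\det$ is the classical null Lagrangian for symmetric divergence-free fields in two dimensions. One clean justification: given any such $\varphi$ with mean $\xi$, each Fourier coefficient $\hat\varphi_\lambda$ at $\lambda\neq 0$ is, by symmetry together with $\hat\varphi_\lambda\lambda=0$, a scalar multiple of the rank-one matrix $\lambda^\perp\otimes\lambda^\perp$; since $\det$ vanishes on rank-one matrices, Plancherel yields $\int_Q\det\varphi\,dx=\det\xi=f(\xi)$, so (\ref{eqdefsdqc}) holds with equality. (An equivalent derivation uses an Airy potential and one integration by parts.)

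To show that $\symf=\det\circ\sym$ fails div-quasiconvexity, I would exhibit a zero-mean, periodic, divergence-free $\varphi\colon\R^2\to\R^{2\times 2}$ with $\int_Q\det(\sym\varphi)\,dx<0$. With $\Phi(x)=\sin(2\pi(x_1-x_2))$, set
\[
\varphi(x)=\begin{pmatrix}\partial_2\Phi & -\partial_2\Phi \\ -\partial_1\Phi & \partial_1\Phi\end{pmatrix}.
\]
Each column is a two-dimensional curl, so $\div\varphi=0$; periodicity forces $\int_Q\varphi\,dx=0$. Since $\partial_1\Phi+\partial_2\Phi\equiv 0$ for this specific $\Phi$, the off-diagonal entries of $\sym\varphi$ cancel, and a short computation gives $\int_Q\det(\sym\varphi)\,dx=-2\pi^2<0=\det 0=\symf(\int_Q\varphi\,dx)$, contradicting div-quasiconvexity of $\symf$.

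The principal obstacle is selecting the right $f$: the natural first guess, Tartar's function $\fT$ from Lemma \ref{lemmatartar}, does \emph{not} work. After a rotation one may assume the null direction of a singular $A$ is $e_1$, so the first column of $A$ vanishes; Cauchy--Schwarz applied to the remaining diagonal entries gives $(\tr A)^2\le(n-1)|\sym A|^2$, and one concludes that $\mathcal{S}\fT$ remains div-quasiconvex. What is needed is an $f$ whose sym-div-quasiconvexity rests on a cancellation peculiar to symmetric divergence-free fields (a null Lagrangian), which breaks once antisymmetric components of $\varphi$ are admitted; the two-dimensional determinant is the canonical example, and the construction above makes the loss of cancellation quantitative.
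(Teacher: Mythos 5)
Your proposal is correct and follows essentially the paper's own proof: the forward implication by restricting the periodic divergence-free test fields to symmetric ones, and for the converse the same example $f=\det$ on $\R^{2\times 2}_\sym$, whose symmetric $\div$-quasiconvexity you verify by Fourier (equivalently via the potential $Dv$ the paper uses) and whose extension $\mathcal{S}f$ you defeat with an explicit oscillatory field, in place of the paper's simpler choice $\varphi=e_1\otimes e_2\sin(2\pi x_1)$. One small caveat: the paper takes the divergence row-wise, $(\div\varphi)_i=\sum_j\partial_j\varphi_{ij}$, and your field is only column-wise divergence-free, so you should replace it by its transpose -- a harmless change, since $\mathcal{S}f(\xi)=\mathcal{S}f(\xi^T)$ and the mean transposes accordingly.
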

\begin{proof}
In order to prove that $f$ is symmetric $\div$-quasiconvex, we pick $\varphi \in C^\infty_\per((0,1)^n; \R^{n\times n}_\sym)$ with $\div \varphi=0$ and observe that
\begin{equation}
 f(\int_{(0,1)^n} \varphi\,dx)=\symf(\int_{(0,1)^n} \varphi\,dx)\le \int_{(0,1)^n} \symf(\varphi)dx=\int_{(0,1)^n} f(\varphi)dx.
\end{equation}
For the converse implication, we consider $n=2$ and $f(F)=\det(F)$, so that
\begin{equation}
 \symf(F)=\det\frac{F+F^T}2 = \det F - \frac14 (F_{12}-F_{21})^2.
\end{equation}
We first check that $f$ is symmetric $\div$-quasiconvex. Let $\xi\in \R^{2\times 2}_\sym$, $\varphi\in C^\infty_\per([0,1]^2;\R^{2\times 2}_\sym)$ with $\div\varphi=0$ and $\int_{(0,1)^n} \varphi dx=0$. Then, there is $v\in C^\infty(\R^2;\R^2)$ with {$Dv={}^\perp\varphi^{\perp}$}, 
{where by this compact notation we mean $Dv=R\varphi R$, with $R=e_1\otimes e_2-e_2\otimes e_1$.} Since $\varphi$ has average $0$ and is periodic, we can choose $v\in C^\infty_\per([0,1]^2;\R^2)$. In particular,
\begin{equation}
\int_{[0,1]^2} f(\xi+\varphi)dx = \det \xi + \int_{[0,1]^2} \det Dv dx = \det\xi = f(\xi).
\end{equation}
At the same time, the function $\varphi(x):=e_1\otimes e_2 \sin(2\pi x_1)$ is $[0,1]^2$-periodic, divergence-free, has average 0, and gives
\begin{equation}
 \int_{[0,1]^2} \symf(\varphi) dx= -\frac14 \int_{[0,1]^2} \sin^2(2\pi x_1) dx = -\frac{1}8 < 0=\symf(0).
\end{equation}
\end{proof}

\section{Symmetric $\divv$-quasiconvex sets and hulls}
\label{sec:relax}

\subsection{Symmetric $\divv$-quasiconvex sets}
In this section, we discuss symmetric $\div$-quasiconvexity of sets and their hulls. As in the case of quasiconvexity, there are different possible definitions of the hulls, depending on the growth that is assumed. For quasiconvexity, it has been shown that the $p$-quasiconvex hull of a compact set does not depend on the assumed growth $p$. The key technical ingredient is Zhang's truncation Lemma, see \cite{Zhang1992}. In the present setting, we can only prove the corresponding result for $1<p<\infty$, since the bounds on the potentials of the oscillatory fields are based on singular-integral estimates which only hold in that range, see Lemma \ref{lemmapotentials} below. For clarity we give separate definitions for $p\in[1,\infty]$.
\begin{definition}\label{defKsdqc}
A compact set $K \subseteq\Rnsym$ is \sdqclong\ if, for any $\xi\in\Rnsym\setminus K$, there is a symmetric ${\rm div}$-quasiconvex function $g\in C^0(\Rnsym;[0,\infty))$ such that $g(\xi)>\max g(K)$.

A compact set $K \subseteq\Rnsym$ is $p$-\sdqclong, with $p\in[1,\infty)$, if the function $g$ can be chosen to have $p$-growth, in the sense that $g(\sigma)\le c(|\sigma|^p+1)$ for some $c\in\R$ and all $\sigma\in\Rnsym$.
\end{definition}
We remark that the function $g$ can be chosen so that it vanishes on $K$ by replacing it with $\hat g:=\max\{g-\max g(K),0)\}$.

It is clear that if $K$ is $p$-\sdqclong\ for some $p$ then it is \sdqclong. As in the case of quasiconvexity, the definition for non compact sets depends crucially on growth and many variants are possible. We do not discuss this case here.

\def\no{
\begin{lemma}
If $K$ is \sdqclong\ then there is a \sdqclong\ function $g\in C^0(\Rnsym;[0,\infty))$ with $K=\{g=0\}$. If $K$ is $p$-\sdqclong\ then $g$ can be chosen to have $p$ growth.
\end{lemma}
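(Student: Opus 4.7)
My approach is to combine the pointwise separating functions from Definition \ref{defKsdqc} into a single continuous, non-negative \sdqclong\ function via a countable cover and a suitably normalized weighted series; the $p$-growth refinement is then obtained by choosing the weights so as to respect the growth bound.

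To set up, for each $\xi \in \Rnsym \setminus K$ I apply Definition \ref{defKsdqc} together with the remark immediately following it (replacing the given separating $g$ by $\max\{g - \max g(K), 0\}$) to obtain a non-negative, continuous, \sdqclong\ function $g_\xi$ that vanishes on $K$ and satisfies $g_\xi(\xi) > 0$. By continuity $g_\xi$ is strictly positive on an open ball $B_\xi$ around $\xi$. In the $p$-growth case I additionally record a constant $c_\xi > 0$ with $g_\xi(\sigma) \leq c_\xi(|\sigma|^p + 1)$ for all $\sigma$. Since $\Rnsym \setminus K$ is second countable, the cover $\{B_\xi\}$ has a countable subcover $\{B_j\}_{j \in \N}$ with corresponding functions $g_j$ (and constants $c_j$ in the $p$-growth case).

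Next I assemble the desired global function by setting
\begin{equation*}
  g(\sigma) := \sum_{j=1}^\infty 2^{-j} \frac{g_j(\sigma)}{N_j},
\end{equation*}
where $N_j := 1 + \sup_{|\sigma| \leq j} g_j(\sigma)$ in the general case and $N_j := c_j$ in the $p$-growth case. This choice makes the series converge uniformly on every compact subset of $\Rnsym$: in the general case because the $j$-th term is bounded by $2^{-j}$ on the ball of radius $j$, and in the $p$-growth case because each term is pointwise dominated by $2^{-j}(|\sigma|^p + 1)$. In particular $g$ is continuous, and in the $p$-growth case $g(\sigma) \leq |\sigma|^p + 1$. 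By construction $g \geq 0$, $g = 0$ on $K$, and $g(\sigma) > 0$ for every $\sigma \in \Rnsym \setminus K$, since such a $\sigma$ lies in at least one $B_j$ on which $g_j > 0$.

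It remains to verify that $g$ itself is \sdqclong. Each partial sum $g^{(n)} := \sum_{j=1}^n 2^{-j} g_j / N_j$ is a finite non-negative linear combination of \sdqclong\ functions, hence \sdqclong\ by the remark after the definition. For any admissible test field $\varphi \in C^\infty_\per((0,1)^n; \Rnsym)$ with $\div \varphi = 0$, the range $\varphi([0,1]^n)$ is compact, so $g^{(n)} \to g$ uniformly there and both sides of the defining inequality (\ref{eqdefsdqc}) for $g^{(n)}$ converge to the corresponding sides for $g$. The main subtlety I anticipate is coordinating the normalization so that it simultaneously controls continuity on compact sets and the $p$-growth bound; fortunately $p$-growth automatically implies $g_j \leq c_j(j^p+1)$ on the ball of radius $j$, so the single choice $N_j = c_j$ already achieves both objectives.
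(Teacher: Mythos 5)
Your proof is correct, and it goes well beyond what the paper actually provides. The paper's ``proof'' of this lemma is only a one-sentence observation --- namely, that the separating function $g$ in Definition~\ref{defKsdqc} can be replaced by $\hat g = \max\{g - \max g(K), 0\}$ so that it vanishes on $K$ --- after which the paper stops. (Indeed, in the source this lemma and its stub proof sit inside an unused macro and never appear in the compiled paper.) You reproduce that first step as the normalization of each $g_\xi$ and then carry out the genuine content of the proof: the Lindel\"of reduction to a countable family $\{g_j\}$ whose positivity regions cover $\Rnsym\setminus K$, the weighted series $g = \sum_j 2^{-j} g_j / N_j$ with the two tailored normalizations $N_j = 1 + \sup_{|\sigma|\le j} g_j(\sigma)$ (general case) versus $N_j = c_j$ ($p$-growth case), local uniform convergence to guarantee continuity and the $p$-growth bound, and the passage to the limit in the defining inequality \eqref{eqdefsdqc} using compactness of the range of each periodic test field and of the average $\int\varphi$. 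That last step is the one real point of care --- an infinite sum of \sdqclong{} functions is not automatically \sdqclong{} --- and you handle it correctly by testing the partial sums $g^{(n)}$ and using uniform convergence on compacts to pass both sides of the inequality to the limit. In short, your argument is a valid completion of the paper's truncated sketch rather than a different route from it.
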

\begin{proof}
First observe that the function $g$ in Definition \ref{defKsdqc} can be chosen to vanish on $K$, by replacing it by $\hat g=\max\{g-\max g(K),0)\}$.
\end{proof}
}

\begin{lemma}\label{lemmaLinftyKclosed}
Let $K\subseteq\Rnsym$ be compact and symmetricaly $\div$-quasiconvex, $E:=\{\sigma\in L^\infty(\Omega;K): \div \sigma=0\}$. Then, $E$ is closed with respect to weak-$*$ convergence in $L^\infty(\Omega;\Rnsym)$.
\end{lemma}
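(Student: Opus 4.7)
The plan is to fix an arbitrary sequence $(\sigma_j)_{j\in\N}\subseteq E$ with $\sigma_j\weakstarto\sigma$ in $L^\infty(\Omega;\Rnsym)$ and verify both properties defining $E$ for the limit. The constraint $\div\sigma=0$ passes immediately to the weak-$*$ limit: for any test function $\phi\in C^\infty_c(\Omega;\R^n)$, the pairing $-\int_\Omega\sigma_j\cdot D\phi\,dx\to -\int_\Omega\sigma\cdot D\phi\,dx$ by definition of weak-$*$ convergence, and each integrand equals the distributional action $\langle\div\sigma_j,\phi\rangle=0$. Moreover, the compactness of $K$ furnishes a uniform $L^\infty$ bound on $(\sigma_j)$, which is all that is needed to invoke Lemma \ref{lemmalowersem}(i). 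The nontrivial part is therefore to show that $\sigma(x)\in K$ for a.e.\ $x\in\Omega$.

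For this the strategy is to use the separating sdqc functions supplied by Definition \ref{defKsdqc}. Given any $\xi\in\Rnsym\setminus K$, Definition \ref{defKsdqc} together with the remark that follows it produces a continuous \sdqclong\ function $g_\xi:\Rnsym\to[0,\infty)$ that vanishes on $K$ and satisfies $g_\xi(\xi)>0$. By continuity, $U_\xi:=\{g_\xi>0\}$ is an open neighborhood of $\xi$, and the family $\{U_\xi\}_{\xi\in\Rnsym\setminus K}$ covers the open set $\Rnsym\setminus K$. Since $\Rnsym$ is second countable, Lindel\"of's theorem lets us extract a countable subcover, yielding a countable family $(g_k)_{k\in\N}$ of continuous \sdqclong\ functions with $g_k\ge 0$, $g_k|_K=0$, and
\begin{equation*}
 K \;=\; \bigcap_{k\in\N}\{g_k=0\}.
\end{equation*}

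Now apply Lemma \ref{lemmalowersem}(i) to each $g_k$. Since $\sigma_j(x)\in K$ for a.e.\ $x$, we have $g_k\circ\sigma_j\equiv 0$ a.e., hence
\begin{equation*}
 0 \;\le\; \int_\Omega g_k(\sigma(x))\,dx \;\le\; \liminf_{j\to\infty}\int_\Omega g_k(\sigma_j(x))\,dx \;=\; 0.
\end{equation*}
As $g_k\ge 0$ and continuous, it follows that $g_k(\sigma(x))=0$ outside a null set $N_k$. Setting $N:=\bigcup_k N_k$, still Lebesgue null, every $x\in\Omega\setminus N$ satisfies $\sigma(x)\in\bigcap_k\{g_k=0\}=K$, and the proof is complete. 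The only point requiring care is the passage from the uncountable separating family indexed by $\xi\in\Rnsym\setminus K$ to a single countable family whose common zero set is exactly $K$; once this Lindel\"of reduction is in place, the conclusion is an immediate application of the $L^\infty$ lower-semicontinuity result of Lemma \ref{lemmalowersem}(i).
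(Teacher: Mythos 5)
Your proposal is correct and follows essentially the same route as the paper's proof: use separating \sdqclong\ functions vanishing on $K$, extract a countable family covering $\Rnsym\setminus K$ (the paper covers by balls $B_{r_\xi}(\xi)$, you use the open sets $\{g_\xi>0\}$ directly — same Lindel\"of argument), and apply Lemma \ref{lemmalowersem}(i) to conclude $g_k(\sigma(x))=0$ a.e. The only difference is that you spell out the routine verification that $\div\sigma=0$ in the weak-$*$ limit and the uniform $L^\infty$ bound, which the paper leaves implicit.
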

\begin{proof}
Let $\sigma_j\in E$ be such that $\sigma_j\weakstarto\sigma$ in $L^\infty(\Omega;\Rnsym)$.

For any $\xi\in\Rnsym\setminus K$, there is a \sdqclong\ function $g_\xi\in C^0(\Rnsym;[0,\infty))$ which vanishes on $K$ and with $g_\xi(\xi)>0$. By continuity, $g_\xi>0$ on $B_{r_\xi}(\xi)$, for some $r_\xi>0$. The set $\Rnsym\setminus K$ can be covered by countably many such balls $B_i$. Let $g_i$ be the corresponding functions. It suffices to show that $\{x: \sigma(x)\in B_i\}$ is a null set for any $i$.

By Lemma \ref{lowersem}\ref{lowersem1}, recalling that $\sigma_j\in K$ almost everywhere for all $j$, we obtain $\int_\Omega g_i(\sigma)dx\le \liminf_{j\to\infty} \int_\Omega g_i(\sigma_j)dx=0$. This implies that $g_i(\sigma(x))= 0$ almost everywhere. Since $g_i>0$ on $B_i$ we obtain that $\{x: \sigma(x)\in B_i\}$ is a null set, which concludes the proof.
\end{proof}

We are now ready to prove our first main result, namely, an existence statement for static problems with \sdqclong\ yield sets. We refer to the introduction for the formulation and the main definitions and recall in particular that $\gD\in L^1(\partial\Omega;\R^n)$ denotes the boundary data.
\begin{theorem}\label{theoexist}
If $K$ is nonempty and \sdqclong, then $F$ is weakly upper semicontinuous and the problem defined in (\ref{eqvarpb1}) {and (\ref{eqintrodefFsigma})} has a solution $\sigma_*\in L^\infty(\Omega;K)$, which obeys $\div\sigma_*=0$ {in the sense of distributions}.
\end{theorem}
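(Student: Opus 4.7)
The plan is to reduce $F$ to a weakly-$*$ continuous linear functional on the admissible set and then apply the direct method of the calculus of variations, using Lemma \ref{lemmaLinftyKclosed} to retain the pointwise constraint $\sigma\in K$ in the limit.

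The key preliminary step is to fix once and for all an extension $v_0\in W^{1,1}(\Omega;\R^n)$ of $\gD$, which exists because the trace operator $W^{1,1}(\Omega;\R^n)\to L^1(\partial\Omega;\R^n)$ is surjective. Writing a generic admissible $v$ as $v_0+w$ with $w\in W^{1,1}_0(\Omega;\R^n)$, the definition of $F$ becomes
\[
F(\sigma)=\int_\Omega \sigma\cdot Dv_0\,dx+\inf_{w\in W^{1,1}_0(\Omega;\R^n)}\int_\Omega\sigma\cdot Dw\,dx.
\]
I would then establish the dichotomy: the infimum equals $0$ if $\div\sigma=0$ distributionally, and $-\infty$ otherwise. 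The first case uses $\int\sigma\cdot D\varphi\,dx=0$ for $\varphi\in C_c^\infty(\Omega;\R^n)$ together with the density of $C_c^\infty$ in $W^{1,1}_0$, which is legitimate because $\sigma\in L^\infty$; the second follows by picking some $\varphi\in C_c^\infty$ with $\int\sigma\cdot D\varphi\neq 0$ and sending $\varphi\mapsto t\varphi$ to $\pm\infty$. Consequently, on the effective domain $\{\sigma\in L^\infty(\Omega;K):\div\sigma=0\}$ the functional $F$ coincides with the linear map $\sigma\mapsto\int_\Omega\sigma\cdot Dv_0\,dx$, which is weakly-$*$ continuous on $L^\infty$ since $Dv_0\in L^1$; elsewhere $F\equiv-\infty$.

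With this reduction, the remainder is routine. Nonemptiness of $K$ supplies an admissible constant field, making $\sup F$ finite from below, while compactness of $K$ bounds $F$ from above by $(\sup_K|\cdot|)\|Dv_0\|_{L^1}$. A maximizing sequence $\sigma_j$ is necessarily divergence-free with uniformly bounded $L^\infty$ norms, so a subsequence satisfies $\sigma_j\weakstarto\sigma_*$ in $L^\infty(\Omega;\Rnsym)$. Passing the distributional identity $\div\sigma_j=0$ to the limit against $C_c^\infty$ test fields yields $\div\sigma_*=0$, and Lemma \ref{lemmaLinftyKclosed} delivers $\sigma_*\in K$ almost everywhere. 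Weak-$*$ continuity of the linear functional then gives $F(\sigma_j)\to F(\sigma_*)=\sup F$. The same argument proves weak-$*$ upper semicontinuity of $F$ on $L^\infty(\Omega;K)$, since terms with $F(\sigma_j)=-\infty$ cannot raise the $\limsup$, and along the complementary subsequence (where $\div\sigma_j=0$) the above gives outright convergence to $F(\sigma_*)$. The only nontrivial ingredient is therefore Lemma \ref{lemmaLinftyKclosed}; everything else is the linear structure of $F$ on divergence-free fields together with weak-$*$ compactness of bounded sets in $L^\infty$.
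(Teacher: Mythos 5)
Your proof is correct and follows essentially the same route as the paper: finiteness of $\sup F$ via a constant admissible field and a $W^{1,1}$ extension of $\gD$, the dichotomy that $F(\sigma)=-\infty$ unless $\div\sigma=0$ in the sense of distributions, weak-$*$ compactness of a maximizing sequence, and Lemma~\ref{lemmaLinftyKclosed} to preserve the constraint $\sigma_*\in K$ in the limit. The only real difference is the final step: you identify $F$ on divergence-free fields with the single weak-$*$ continuous functional $\sigma\mapsto\int_\Omega\sigma\cdot D v_0\,dx$ (which tacitly uses that trace-zero $W^{1,1}$ functions on a Lipschitz domain lie in $W^{1,1}_0$, so that approximation by $C^\infty_c$ test fields applies), whereas the paper avoids this ingredient by using only that $F$, being an infimum of weak-$*$ continuous linear functionals, is weak-$*$ upper semicontinuous.
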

\begin{proof}
We first prove that $\sup F\in\R$.

Let $\xi_0\in K$. Using the constant function $\sigma=\xi_0$ gives
\begin{equation}
F(\xi_0)=\xi_0\cdot \int_\Omega \nabla v\, dx = \xi_0\int_{\partial\Omega} \gD\otimes \nu d\calH^{n-1}\in\R,
\end{equation}
hence $\sup F\ne-\infty$.

By the trace theorem for $W^{1,1}$ (see for example \cite[p. 168]{AmbrosioFP}), we can extend $\gD$ to a function $W^{1,1}(\Omega;\R^n)$, which we shall also denote $\gD$. For any $\sigma\in L^\infty(\Omega;K)$ we have
\begin{equation}
F(\sigma)\le \int_\Omega \sigma \cdot \nabla \gD \, dx\le \|\gD\|_{W^{1,1}}\max\{|\xi|: \xi\in K\},
\end{equation}
hence $\sup F\ne +\infty$.

Next, we show that only fields $\sigma$ that are divergence-free need be considered. If we assume additional regularity, then an integration by parts gives
\begin{equation}
  \int_\Omega
  \sigma \cdot \nabla v\, dx
  =
  \int_{\partial\Omega}
    \sigma \gD\cdot \nu d\calH^{n-1} - \int_\Omega v \cdot \div\sigma
  \, dx,
\end{equation}
which does not contain any derivative of $v$. In particular, the $\inf$ is $-\infty$ unless $\div\sigma=0$ almost everywhere.

Consider now a generic $\sigma\in L^\infty(\Omega;\Rnsym)$. If $\div \sigma\ne0$ {in the sense of distributions}, then there is $\theta\in C^\infty_c(\Omega;\R^n)$ such that $\int_\Omega \sigma\cdot D\theta\, dx\ne 0$. We consider the one-parameter family of test functions $v_t:=\gD+t\theta$ and obtain
\begin{equation}
 F(\sigma) \le \int_\Omega \sigma\cdot Dv_t\, dx = \int_\Omega \sigma\cdot D\gD\, dx + t \int_\Omega \sigma\cdot D\theta \,dx\,\,\, \text{ for all } t\in\R ,
\end{equation}
which shows that $F(\sigma)=-\infty$. Therefore, we can restrict attention to fields $\sigma$ that are divergence-free {in the sense of distributions}.

Let $\sigma_k\in L^\infty(\Omega;K)$ be a maximizing sequence. By the preceding argument, $\div\sigma_k=0$ {in the sense of distributions}. Since the sequence is bounded in $L^\infty$, after extracting a subsequence it converges weak-$*$ to some $\sigma_*$, by the properties of distributions $\div \sigma_*=0$. Lemma \ref{lemmaLinftyKclosed} implies that $\sigma_*\in K$ almost everywhere. Hence, we only need to show that it is a maximizer. For any $v\in W^{1,1}(\Omega;\R^n)$ with $v=\gD$ on the boundary we have
\begin{equation}
 \int_\Omega \sigma_*\cdot \nabla v \, dx = \lim_{k\to\infty} \int_\Omega \sigma_k\cdot \nabla v \, dx \ge \limsup_{k\to\infty} F(\sigma_k),
\end{equation}
hence,
\begin{equation}
F(\sigma_*)\ge \limsup_{k\to\infty} F(\sigma_k) = \sup F.
\end{equation}
\end{proof}

\subsection{Symmetric $\div$-quasiconvex hulls}
We now deal with the case that $K$ is not \sdqclong. Within the framework of relaxation theory, we begin by defining the \sdqclong\ hull.
\begin{definition}\label{defKp}
Let $K\subseteq\Rnsym$ be compact, {$p\in[1,\infty)$,} $f_p(\xi):=\dist^p(\xi,K)$.
We define
\begin{equation}
\Kp:=\{\xi\in \Rnsym: \Qsd f_p(\xi)=0\}
\end{equation}
and
\begin{equation}
\begin{split}
\Kinfty
  :=
  \{&
    {\xi} \in\Rnsym \, : \,
    g(\xi) \le \max g(K) \\
    &\text{ for all symmetric ${\rm div}$-quasiconvex $g\in C^0(\Rnsym;[0,\infty))$}
  \} .
  \end{split}
\end{equation}
\end{definition}
\begin{lemma}\label{smallestsdqc}
$\Kinfty$ is the smallest \sdqclong\ compact set that contains $K$. $\Kp$ is the smallest $p$-\sdqclong\ compact set that contains $K$.
\end{lemma}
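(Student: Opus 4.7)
The plan splits into the two hulls.

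For $\Kinfty$: containment $K \subseteq \Kinfty$ is immediate from the defining inequality. Boundedness follows by testing with the convex (hence \sdqclong) function $|\cdot|^2$, so $|\xi|^2 \le \max_K|\cdot|^2$ for every $\xi\in\Kinfty$; closedness is automatic since each defining condition is a continuous inequality. For the \sdqclong\ property of the set itself, given $\xi\notin\Kinfty$ let $g$ be the separating function from Definition \ref{defKsdqc}; by the very definition of $\Kinfty$, every $\eta\in\Kinfty$ satisfies $g(\eta) \le \max g(K)$, so $\max g(\Kinfty) \le \max g(K) < g(\xi)$ and the same $g$ separates $\xi$ from $\Kinfty$. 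Minimality is the dual statement: if $K'\supseteq K$ is a \sdqclong\ compact set and some $\xi\in\Kinfty\setminus K'$ existed, the \sdqclong\ function separating $\xi$ from $K'$ would satisfy $g(\xi)>\max g(K')\ge\max g(K)$, contradicting $\xi\in\Kinfty$.

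For $\Kp$: the natural witness is $g_0:=\Qsd f_p$. By Lemma \ref{lemmaqsdqc} it is \sdqclong, by Lemma \ref{lemmalambdaconvex} locally Lipschitz, nonnegative, of $p$-growth (since $g_0\le f_p$), and vanishes exactly on $\Kp$. Containment $K\subseteq\Kp$ and closedness of $\Kp$ are both immediate from these properties. For boundedness, the convex (hence \sdqclong) function $h(\xi):=(|\xi|-\max_K|\cdot|)_+^p$ is a minorant of $f_p$, and since $\Qsd f_p$ is the pointwise largest \sdqclong\ function $\le f_p$, one has $h\le g_0$, forcing $g_0(\xi)\to\infty$ as $|\xi|\to\infty$. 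The $p$-\sdqclong\ property of $\Kp$ is then witnessed by $g_0$ itself.

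The substantive step is minimality for $\Kp$. Assume $K'\supseteq K$ is $p$-\sdqclong\ compact but $\xi\in\Kp\setminus K'$. Pick a \sdqclong\ continuous $g\ge0$ of $p$-growth with $g(\xi)>\max g(K')$, and after subtracting $\max g(K')$ and taking the positive part arrange $g=0$ on $K'\supseteq K$ while keeping $g(\xi)>0$ (both operations preserve \sdqclong-ness and $p$-growth). Since $\Qsd f_p(\xi)=0$, there are smooth periodic divergence-free $\varphi_j$ of mean $\xi$ with $\|\dist(\varphi_j,K)\|_{L^p}\to0$; the \sdqclong\ property of $g$ gives $g(\xi)\le\int_{(0,1)^n} g(\varphi_j)\,dx$, so the proof reduces to showing that the right side tends to zero. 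This is the main technical obstacle, and I would handle it by a splitting at a threshold $R>0$: on $\{|\varphi_j|\le R\}$ the local Lipschitz bound of Lemma \ref{lemmalambdaconvex} combined with $g=0$ on $K$ and H\"older's inequality yield a term $\le L_R\|\dist(\varphi_j,K)\|_{L^1}\to0$, while on $\{|\varphi_j|>R\}$ the $p$-growth of $g$, the pointwise bound $|\varphi_j|\le\max_K|\cdot|+\dist(\varphi_j,K)$ (which yields a uniform $L^p$-bound on $\varphi_j$), and Chebyshev's estimate $|\{|\varphi_j|>R\}|\lesssim R^{-p}$ together make the contribution uniformly small for $R$ large. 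Taking $R$ and then $j$ large forces $g(\xi)=0$, the desired contradiction.
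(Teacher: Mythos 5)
Your argument is correct, and for the $\Kinfty$ part it coincides with the paper's proof (separation of $\xi\notin\Kinfty$ by the same $g$, and minimality by the contrapositive). For $\Kp$ the overall structure is also the same — the witness is $\Qsd f_p$, via Lemma \ref{lemmaqsdqc} — but the substantive minimality step is carried out by a genuinely different mechanism. The paper argues pointwise: for a \sdqclong\ $g$ of $p$-growth vanishing on $K$ and any $\eps>0$ it establishes $g\le \max g(K)+\eps+C_\eps f_p$ on all of $\Rnsym$ (uniform continuity near the compact $K$ plus $p$-growth far away), and then simply applies the envelope operation, using monotonicity of $\Qsd$ and $\Qsd g=g$, to get $g(\xi)\le\max g(K)+\eps$ at any $\xi$ with $\Qsd f_p(\xi)=0$. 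You instead test $g$ directly on a minimizing sequence $\varphi_j$ for $\Qsd f_p(\xi)$ and prove $\int g(\varphi_j)\,dx\to0$ by splitting at a level $R$: the local Lipschitz bound of Lemma \ref{lemmalambdaconvex} together with $g=0$ on $K$ controls $\{|\varphi_j|\le R\}$ by $\|\dist(\varphi_j,K)\|_{L^1}$, while $p$-growth, the bound $|\varphi_j|\le\max_K|\cdot|+\dist(\varphi_j,K)$ and Chebyshev control the tail uniformly in $j$. Both are valid; the paper's route is shorter and avoids the Lipschitz lemma and the tail estimates, whereas yours is more hands-on and quantitative. A point in your favour: you explicitly verify boundedness and closedness of the hulls (the convex minorant $(|\xi|-\max_K|\cdot|)_+^p\le \Qsd f_p$ for $\Kp$, the function $|\cdot|^2$ for $\Kinfty$), which is needed for the hulls to be compact sets in the sense of Definition \ref{defKsdqc} and which the paper's proof leaves implicit.
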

As usual, the first assertion means that any \sdqclong\ compact set that contains $K$ also contains $\Kinfty$, and analogously for the second.
\begin{proof}
We start by $\Kp$. By Lemma \ref{lemmaqsdqc} the function $\Qsd f_p$ is \sdqclong. From $\Qsd f_p\le f_p$ it follows that $\Qsd f_p$ has $p$-growth and that $K\subseteq\Kp$. If $\xi\in\Rnsym \setminus\Kp$, then $\Qsd f_p(\xi)>0=\max\Qsd f_p(\Kp)$. Therefore, $\Kp$ is $p$-\sdqclong.

To show minimality, we consider a $p$-\sdqclong\ compact set $\tilde K$ with $K\subseteq\tilde K$ and show that $\Kp\subseteq\tilde K$. To this end, we fix a $\xi\in\Kp$ and a \sdqclong\ function $g$ with $p$ growth and show that $g(\xi)\le \max g(K)\le \max g(\tilde K)$. If this holds for any such function $g$, then necessarily $\xi\in \tilde K$, which implies $\Kp\subseteq\tilde K$ and concludes the proof.

It remains to show that $g(\xi)\le \max g(K)$. Let $\eps>0$. Since $g$ is continuous and $f_p>0$ outside $K$, there is $\delta>0$ such that $g(\sigma)\le \max g(K)+\eps$ for all $\sigma$ with $f_p(\sigma)\le \delta$. Using the fact that $g$ has $p$-growth, we then obtain $g\le \max g(K)+\eps + C_\eps f_p$ pointwise. By monotonicity of the \sdqclong\ envelope, this gives $g=\Qsd g \le \max g(K)+\eps+C_\eps \Qsd f_p$ pointwise and, therefore, $g(\xi)\le \max g(K)+\eps$. Since $\eps$ is arbitrary, this concludes the proof.

We now treat the $p=\infty$ case. The fact that $K\subseteq\Kinfty$ is obvious. To show that $\Kinfty$ is \sdqclong, we pick $\xi\not\in \Kinfty$. By the definition of $\Kinfty$, there is a \sdqclong\ function $g$ with $g(\xi)>\max g(K)$. At the same time, for any $\sigma\in\Kinfty$ it follows that $g(\sigma)\le \max g(K)$, which implies $\max g(\Kinfty)=\max g(K)$. We conclude that $g(\xi)>\max g(\Kinfty)$, which shows that $\Kinfty$ is \sdqclong.

To show minimality, we assume that $\tilde K$ is \sdqclong\ and $K\subseteq\tilde K$. We wish to show that $\Kinfty\subseteq\tilde K$. To this end, we fix a $\xi\in\Rnsym\setminus\tilde K$ and choose a \sdqclong\ function $g$ with $g(\xi)>\max g(\tilde K)$. From $K\subseteq\tilde K$, we obtain $\max g(\tilde K)\ge \max g(K)$. Therefore, $\xi\not\in \Kinfty$. This implies $\Kinfty\subseteq\tilde K$ and concludes the proof.
\end{proof}

We proceed to show that $\Kp$ does not depend on $p$, as long as $p\ne\infty$. One inclusion can easily be obtained from the definition. The other will be discussed in Section \ref{sectrunc} below.
\begin{theorem}\label{theorempq}
Let $K\subseteq\R^{n\times n}_\sym$ be compact, $1<p<q<\infty$. Then, $K^{(p)}=K^{(q)}$.
\end{theorem}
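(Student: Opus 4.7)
\emph{Proof plan.} The two inclusions will be proved separately.

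The inclusion $K^{(q)}\subseteq K^{(p)}$ is immediate from Lemma \ref{smallestsdqc}. Since $|\sigma|^p\le 1+|\sigma|^q$ for every $\sigma\in\Rnsym$, every function of $p$-growth automatically has $q$-growth, so $K^{(p)}$ is itself $q$-\sdqclong\ and contains $K$; the minimality of $K^{(q)}$ then yields the inclusion.

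For the reverse inclusion $K^{(p)}\subseteq K^{(q)}$, fix $\xi\in K^{(p)}$ and, by definition, select $\varphi_k\in C^\infty_\per((0,1)^n;\Rnsym)$ periodic, divergence-free, with $\int\varphi_k\,dx=\xi$ and $\delta_k:=\int\dist^p(\varphi_k,K)\,dx\to 0$. The plan is to construct a competitor $\tilde\varphi_k$ that is admissible (smooth, periodic, divergence-free, symmetric, of average $\xi$), is uniformly bounded in $L^\infty$, and agrees with $\varphi_k$ off a set $E_k$ of vanishing measure. Once this is available the required bound follows from the splitting
\begin{equation*}
  \int\dist^q(\tilde\varphi_k,K)\,dx
  = \int_{(0,1)^n\setminus E_k}\dist^q(\varphi_k,K)\,dx
  +\int_{E_k}\dist^q(\tilde\varphi_k,K)\,dx ,
\end{equation*}
where the first piece is $O(\delta_k)$ since $\dist^q(\varphi_k,K)\le C\dist^p(\varphi_k,K)$ on the coincidence set (by the uniform bound and the compactness of $K$), and the second is controlled by $\|\tilde\varphi_k\|_{L^\infty}^q\,|E_k|$.

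The construction of $\tilde\varphi_k$ is where the potential theory of Section \ref{sectrunc} enters. Writing $\psi_k:=\varphi_k-\xi$, which is periodic, symmetric, mean-zero and divergence-free, one uses the constant-rank property of $\sdiv$ (Lemma \ref{lemmaconstantrank}) to represent $\psi_k=\mathcal{B}\Phi_k$ in terms of a higher-order potential $\Phi_k$ satisfying the Calder\'on--Zygmund-type bound $\|\Phi_k\|_{W^{m,p}}\le C\|\psi_k\|_{L^p}$ given by Lemma \ref{lemmapotentials}. A Whitney--Lipschitz truncation of $\Phi_k$ at an appropriate scale $\lambda_k$, followed by mollification and a small constant correction to restore the vanishing average, then produces the desired $\tilde\varphi_k$ with $\|\tilde\varphi_k\|_{L^\infty}\le|\xi|+C\lambda_k$ and $|E_k|\le C\lambda_k^{-p}\|\Phi_k\|_{W^{m,p}}^p$. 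The truncation level $\lambda_k$ must be tuned carefully against $\delta_k$ so that both contributions in the splitting vanish in the limit.

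The principal obstacle is the restriction $1<p<\infty$: both the singular-integral estimate of Lemma \ref{lemmapotentials} and the maximal-function bound that underpins the Lipschitz truncation fail at the endpoints, which is why the statement excludes $p=1$ and $q=\infty$. A secondary technical issue is the balancing of $\lambda_k$: because $\|\psi_k\|_{L^p}$ is only bounded and not small, the crude bound $\|\tilde\varphi_k\|_{L^\infty}^q|E_k|\lesssim\lambda_k^{q-p}$ does not vanish by itself, and one must exploit the quantitative $L^p$ closeness of $\varphi_k$ to the compact set $K$ to refine either the truncation or the estimate on $E_k$ beyond the naive weak-type inequality. Finally, performing the truncation on the potential $\Phi_k$ rather than on $\psi_k$ itself is what guarantees that the truncated field $\tilde\psi_k=\mathcal{B}\tilde\Phi_k$ automatically inherits the symmetry and divergence-free constraints.
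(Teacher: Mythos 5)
Your easy direction $K^{(q)}\subseteq K^{(p)}$ is correct: a function of $p$-growth automatically has $q$-growth since $|\sigma|^p\le 1+|\sigma|^q$ for $p<q$, so $K^{(p)}$ is $q$-\sdqclong, and minimality of $K^{(q)}$ (Lemma \ref{smallestsdqc}) finishes it. The paper's Lemma \ref{lemmapqsimple} argues via the pointwise inequality $f_p\le \delta^p+\delta^{p-q}f_q$ applied to $\Qsd$, but your route is equally valid and additionally handles $q=\infty$ cleanly.

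The hard direction, however, contains a genuine gap that you yourself flag but do not close. You propose to take the potential $\Phi_k$ of the full zero-mean field $\psi_k=\varphi_k-\xi$, truncate it at a level $\lambda_k$, and balance $\lambda_k$ against $\delta_k$. As you note, the exceptional set satisfies only $|E_k|\lesssim \lambda_k^{-p}\|\psi_k\|_{L^p}^p$, and since $\|\psi_k\|_{L^p}$ is merely bounded (not small), the error term $\|\tilde\varphi_k\|_{L^\infty}^q|E_k|\lesssim\lambda_k^{q-p}$ does not go to zero for any choice of $\lambda_k$; sending $\lambda_k\to 0$ instead collapses $\tilde\varphi_k$ to the constant $\xi$, which is useless. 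Saying ``one must exploit the quantitative $L^p$ closeness of $\varphi_k$ to $K$'' names the problem but leaves the proof incomplete. The missing idea, which is the core of Lemma \ref{lemmapqdifficult}, is a decomposition of the field \emph{before} passing to potentials. Fixing $M$ with $K\subseteq B_{M-1}$ and $|\xi|\le M-1$, one writes $w_k = w_k^M + w_k^L$ with $w_k^M = w_k\chi_{\{|w_k|<M\}}$ and $w_k^L = w_k\chi_{\{|w_k|\ge M\}}$; the key observation is that $|w_k^L|\le M\dist(w_k,K)$, so $\|w_k^L\|_{L^p}\to 0$. One then takes potentials with \emph{different} exponents via Lemma \ref{lemmapotentials}\ref{lemmapotentialsLpLq}: $\apot_k^M$ for the bounded part in $W^{2,2q}$ with norm $\lesssim M$ (no truncation needed), and $\apot_k^L$ for the small part in $W^{2,p}$ with norm $\to 0$. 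Only $\apot_k^L$ is truncated, producing $\apot_k^T$ that is bounded in $W^{2,\infty}$ by $cM$ and small in $W^{2,p}$; interpolation then gives smallness in $W^{2,2q}$. This is precisely how the closeness to $K$ enters, and it is what makes the error controllable. A further technical step you do not address is the estimate of $\int f_q(w_k^*)\,dx$: the paper introduces a second parameter $N>M$ and uses uniform continuity of $f_p$ on $B_N$ together with the $L^{2q}$ bound on $w_k^*$ (Chebyshev) to absorb the mismatch between $w_k$ and $w_k^*$. Without these two ingredients the sketch cannot be completed.
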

\begin{proof}
Follows from Lemma \ref{lemmapqsimple} and Lemma \ref{lemmapqdifficult} below.
\end{proof}

\begin{definition}
Let $K\subseteq\Rnsym$ be compact. For every $p\in(1,\infty)$, we set $\Ksdqc=\Kp$. {This is admissible by Theorem \ref{theorempq}.}
\end{definition}

\begin{lemma}\label{lemmapqsimple}
Let $K\subseteq\Rnsym$ be compact. Then, $K^{(q)}\subseteq K^{(p)}$ for any $p,q$ with $1\le p < q \le \infty$.
\end{lemma}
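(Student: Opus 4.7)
The plan is to split on whether $q$ is finite or infinite; each case reduces to a one-step argument using the lemmas already established.

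For the case $1\le p<q<\infty$, I unpack the definition of $K^{(q)}$: if $\xi\in K^{(q)}$, i.e.\ $\Qsd f_q(\xi)=0$, there exist admissible test fields $\varphi_j\in C^\infty_\per((0,1)^n;\Rnsym)$ with $\Div\varphi_j=0$, $\int_{(0,1)^n}\varphi_j\,dx=\xi$, and $\int_{(0,1)^n}\dist^q(\varphi_j,K)\,dx\to 0$. Since $(0,1)^n$ has unit measure, Hölder's inequality with exponents $q/p$ and $q/(q-p)$ yields
\begin{equation*}
\int_{(0,1)^n}\dist^p(\varphi_j,K)\,dx \,\le\, \Bigl(\int_{(0,1)^n}\dist^q(\varphi_j,K)\,dx\Bigr)^{p/q}\to 0,
\end{equation*}
so the same sequence $\{\varphi_j\}$ is admissible in the infimum defining $\Qsd f_p(\xi)$ and forces $\Qsd f_p(\xi)=0$, i.e.\ $\xi\in K^{(p)}$.

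For the case $q=\infty$, the idea is to check that $g:=\Qsd f_p$ itself serves as a test function in the defining property of $K^{(\infty)}$. The bound $0\le \Qsd f_p\le f_p$ makes $g$ real-valued and nonnegative; Lemma \ref{lemmaqsdqc} states that $g$ is \sdqclong; Lemma \ref{lemmalambdaconvex} then supplies continuity. Since $f_p$ vanishes identically on $K$, so does $g$, whence $\max g(K)=0$. For any $\xi\in K^{(\infty)}$ the definition of that set gives $\Qsd f_p(\xi)=g(\xi)\le\max g(K)=0$, so $\Qsd f_p(\xi)=0$ and $\xi\in K^{(p)}$.

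The statement is essentially a bookkeeping lemma and no step is genuinely difficult. The only subtlety worth flagging is verifying that $\Qsd f_p$ meets all the formal requirements (real-valued, continuous, nonnegative, \sdqclong) to qualify as a test function in the $K^{(\infty)}$ definition, which the previously established Lemmas \ref{lemmaqsdqc} and \ref{lemmalambdaconvex} handle cleanly.
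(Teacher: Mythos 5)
Your proof is correct and follows essentially the same route as the paper: the $q=\infty$ case is identical (using $\Qsd f_p$ itself as an admissible test function for $\Kinfty$, with Lemmas \ref{lemmaqsdqc} and \ref{lemmalambdaconvex} supplying the required properties), and the finite-$q$ case differs only cosmetically, since your H\"older estimate on a minimizing sequence replaces the paper's pointwise inequality $f_p\le\delta^p+\delta^{p-q}f_q$ combined with monotonicity of the envelope, both resting on the same elementary comparison of $\dist^p$ and $\dist^q$ over the unit cube.
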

\begin{proof}
Assume first that $q<\infty$. We write $f_p(\xi):=\dist^p(\xi,K)$ and, analogously, $f_q$. For all $\delta>0$, we have
\begin{equation}
f_p \le \delta^p + \frac{1}{\delta^{q-p}} f_q
\end{equation}
and, therefore,
\begin{equation}
\Qsd f_p \le \delta^p + \delta^{p-q} \Qsd f_q.
\end{equation}
Let now $\xi\in K^{(q)}$, so that $\Qsd f_q(\xi)=0$. The above inequality implies that $\Qsd f_p(\xi)\le \delta^p$ for any $\delta>0$. We conclude that $\Qsd f_p(\xi)=0$ and $K^{(q)}\subseteq K^{(p)}$.

If, instead, $q=\infty$, it suffices to observe that the function $\Qsd f_p$ is \sdqclong\ (Lemma \ref{lemmaqsdqc}). Therefore, it is one of the candidates in the definition of $\Kinfty$. Since $\Qsd f_p=0$ on $K$, we obtain that, necessarily, $\Qsd f_p=0$ on $\Kinfty$. Hence, $\Kinfty\subseteq \Kp$.
\end{proof}

\begin{remark}
By analogy with the case of quasiconvexity, one might expect that $K^{(p)}=\Kinfty$ for every $p\in[1,\infty)$ and every compact set $K$. This property holds in dimension $n=2$, since $\div$-quasiconvexity is equivalent to quasiconvexity composed with a $90$-degree rotation. We do not know if the statement is true in higher dimensions.
\end{remark}

\begin{lemma}\label{lemmachangevariables}
Let $K\subseteq\Rnsym$ be compact, $A\in\R^{n\times n}$ invertible, $B\in\Rnsym$. Then,
\begin{equation}
(AKA^T+B)^\sdqc=AK^\sdqc A^T+B
\end{equation}
{and
\begin{equation}
(AKA^T+B)^{(\infty)}=A\Kinfty A^T+B.
\end{equation}
 }\end{lemma}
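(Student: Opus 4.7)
Write $\Phi(\xi) := A\xi A^T + B$, so that $\Phi$ is an affine bijection of $\Rnsym$ whose inverse $\Phi^{-1}(\zeta) = A^{-1}\zeta A^{-T} - A^{-1} B A^{-T}$ has the same form (with $A^{-1}$ invertible and a symmetric constant term). The plan is to reduce both identities to the single intermediate claim: if $g \in C^0(\Rnsym;[0,\infty))$ is \sdqclong, then so is $g \circ \Phi$; moreover $p$-growth is preserved by composition with an affine map. Granted the claim, both inclusions in each identity follow immediately from Definition \ref{defKsdqc}: for $\xi \in K^\sdqc$ and any admissible separating function $g$, we have $g(\Phi\xi) = (g \circ \Phi)(\xi) \le \max(g \circ \Phi)(K) = \max g(\Phi(K))$, so $\Phi(\xi) \in (AKA^T+B)^\sdqc$; the reverse inclusion follows by running the same argument with $\Phi^{-1}$. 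The $(\infty)$-statement is handled verbatim, with the weaker class of admissible $g$ (no growth hypothesis).

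\textbf{The transformed test function.} Fix $\varphi \in C^\infty_\per((0,1)^n;\Rnsym)$ with $\div\varphi = 0$ and mean $\bar\varphi$, and set
\begin{equation*}
  \eta(y) := A\,\varphi(A^{-1}y)\,A^T + B.
\end{equation*}
A direct chain-rule computation, using $\partial_{y_j}(A^{-1}y)_m = (A^{-1})_{mj}$ and the identity $\sum_j A_{jl}(A^{-1})_{mj} = \delta_{ml}$, reduces $\sum_j \partial_{y_j}\eta_{ij}(y)$ to $\sum_k A_{ik}(\div\varphi)_k(A^{-1}y) = 0$, so $\eta$ is divergence-free. It is smooth and periodic with respect to the lattice $A\Z^n$, and its mean over any fundamental domain equals $A\bar\varphi A^T + B = \Phi(\bar\varphi)$.

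\textbf{Matching the reference periodicity.} If $A \in \Q^{n\times n}$, pick $M \in \N$ with $MA^{-1} \in \Z^{n\times n}$; then $M\Z^n \subseteq A\Z^n$, so $\eta$ is in particular $(0,M)^n$-periodic, and the rescaled field $\tilde\eta(x) := \eta(Mx)$ is $(0,1)^n$-periodic, smooth and divergence-free (scaling preserves $\div$-freeness). A change of variables shows $\int_{(0,1)^n}\tilde\eta\,dx = \Phi(\bar\varphi)$ and $\int_{(0,1)^n} g(\tilde\eta)\,dx = \int_{(0,1)^n} g(\Phi\varphi)\,dx$. Applying the sdqc inequality for $g$ to the admissible field $\tilde\eta$ therefore reads exactly $(g \circ \Phi)(\bar\varphi) \le \int_{(0,1)^n} (g\circ\Phi)(\varphi)\,dx$, i.e., $g \circ \Phi$ is \sdqclong. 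For general invertible $A$, approximate by invertible rational $A_k \to A$; since $g$ is continuous (Lemma \ref{lemmalambdaconvex}) and $\varphi$ has bounded range, dominated convergence passes the sdqc inequality for $g \circ \Phi_{A_k,B}$ to the limit $g \circ \Phi_{A,B}$. The main obstacle is precisely this mismatch between the natural periodicity lattice $A\Z^n$ of the transformed field $\eta$ and the reference lattice $\Z^n$ in Definition \ref{defKsdqc}; the rationality-plus-approximation step is the standard way to bridge it, and it is cleaner here than attempting to redefine sdqc directly on general parallelepipeds.
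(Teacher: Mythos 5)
Your proposal is correct and shares the paper's overall architecture: both reduce the statement to the key claim that composition with the affine map $\Phi(\xi)=A\xi A^T+B$ (or its inverse) sends \sdqclong\ functions to \sdqclong\ functions, with $p$-growth and nonnegativity preserved, and then deduce the two inclusions by symmetry of $\Phi \leftrightarrow \Phi^{-1}$.

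The one place where you genuinely diverge from the paper is in how the lattice mismatch is resolved. The paper transforms the test field to $\psi(x)=\Phi^{-1}(\varphi(Ax))$, which is $A^{-1}\Z^n$-periodic, and then invokes the $L^\infty$ weak-$*$ lower semicontinuity result (Lemma \ref{lemmalowersem}) applied to the scaled sequence $u_j(x):=\psi(jx)$: the sequence and $g(u_j)$ both converge weakly-$*$ to their averages, giving the desired Jensen-type inequality directly, with no rationality assumption. Your route instead transforms to $\eta(y)=\Phi(\varphi(A^{-1}y))$ (periodic on $A\Z^n$), restricts first to rational $A$ so that an integer rescaling makes the field $\Z^n$-periodic, and then closes the general case by density of $GL_n(\Q)$ in $GL_n(\R)$ and dominated convergence, using the boundedness of $\varphi$ and the continuity of $g$ from Lemma \ref{lemmalambdaconvex}. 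Both are sound. Your version is more elementary in the sense that it avoids the Fonseca--M\"uller lower semicontinuity machinery, trading it for a two-step density argument; the paper's version is a one-shot application of a result it has already developed and avoids the rational/irrational case split. One small imprecision: the step ``$\xi\in K^\sdqc$ implies $(g\circ\Phi)(\xi)\le\max(g\circ\Phi)(K)$'' is not literally from Definition \ref{defKsdqc} (which concerns sets being \sdqclong, not the hull). It rests on the characterization extracted from the proof of Lemma \ref{smallestsdqc}, namely that $\xi\in K^{(p)}$ if and only if $g(\xi)\le\max g(K)$ for every $p$-growth \sdqclong\ $g$ (and analogously for $\Kinfty$); citing that lemma would be cleaner.
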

\begin{proof}
We shall prove below that
\begin{equation}\label{eqcv1}
 (AKA^T+B)^\sdqc\subseteq AK^\sdqc A^T+B.
\end{equation}
In order to derive the other inclusion, we then consider the set $\tilde K:=AKA^{T}+B$, so that $K=A^{-1}(\tilde K-B)A^{-T}$. Application of (\ref{eqcv1}) to $\tilde K$ gives
\begin{equation}
 K^\sdqc = (A^{-1}\tilde KA^{-T} - A^{-1}BA^{-T})^\sdqc \subseteq A^{-1}\tilde K^\sdqc A^{-T} - A^{-1}BA^{-T}.
\end{equation}
Multiplying on the left by $A$ and on the right by $A^T$ yields
\begin{equation}
A K^\sdqc A^T \subseteq \tilde K^\sdqc -B,
\end{equation}
which, recalling the definition of $\tilde K$, is the desired second inclusion.

It remains to prove (\ref{eqcv1}). We consider the set $H:=AK^\sdqc A^T+B$. It is obvious that $AKA^T+B\subseteq H$. If we can prove that $H$ is $p$-\sdqclong, then Lemma \ref{smallestsdqc} implies $(AKA^T+B)^\sdqc\subseteq H$ and concludes the proof.

In order to show that $H$ is $p$-\sdqclong, we fix a symmetric matrix $\hat\sigma\not\in H$ and show that there is a \sdqclong\ function $f$ with $p$-growth such that $f(\hat\sigma)>\max f(H)$. Theorem \ref{theorempq} shows that $p\in(1,\infty)$ can be chosen arbitrarily. In the case of $\Kinfty$, the requirement of $p$-growth does not apply.

We define $\sigma:=A^{-1}(\hat\sigma-B)A^{-T}$, so that $\hat\sigma=A\sigma A^T+B$. The definitions of $H$ and $\hat\sigma$ show that $\sigma\not\in K^\sdqc$. Since $K^\sdqc$ is $p$-\sdqclong, there is a \sdqclong\ function $g$ with $p$-growth such that $g(\sigma)>\max g(K^\sdqc)$. We define $f(\xi):=g(A^{-1}(\xi-B) A^{-T})$, so that $f(\hat\sigma)>\max f(H)$. Growth and continuity are automatically inherited from $g$.

To conclude the proof it remains to show that $f$ is \sdqclong. To this end, pick some $\varphi\in C^\infty_\per((0,1)^n;\Rnsym)$ with $\div\varphi=0$ and let $\xi:=\int_{(0,1)^n} \varphi \,dx$.

For some $F\in\R^{n\times n}$ chosen below, we define $\psi(x):=A^{-1}(\varphi(Fx)-B)A^{-T}$ and compute
\begin{equation}
 \psi_{ij}(x)=\sum_{\alpha,\beta} A^{-1}_{i\alpha} \varphi_{\alpha\beta}(Fx) A^{-1}_{j\beta}-A^{-1}_{i\alpha}B_{\alpha\beta} A^{-1}_{j\beta}
\end{equation}
and
\begin{equation}
   \partial_k \psi_{ij}(x)=\sum_{\alpha,\beta,\gamma} A^{-1}_{i\alpha} \partial_\gamma \varphi_{\alpha\beta}(Fx) A^{-1}_{j\beta} F_{\gamma k}.
\end{equation}
Therefore,
\begin{equation}
(\div \psi)_i(x)=\sum_{\alpha,\beta,\gamma,j} A^{-1}_{i\alpha} \partial_\gamma \varphi_{\alpha\beta}(Fx) A^{-1}_{j\beta} F_{\gamma j}.
\end{equation}
{We choose} $F:=A$, so that
$\sum_j
 A^{-1}_{j\beta} F_{\gamma j}=\Id_{\beta\gamma}$ and
\begin{equation}
(\div \psi)_i(x)=\sum_{\alpha,\beta} A^{-1}_{i\alpha} \partial_\beta \varphi_{\alpha\beta}(Fx) =0.
\end{equation}
Recalling the definitions of $f$ and $\psi$, we compute
\begin{equation}
\begin{split}
 \int_{(0,1)^n} f(\varphi(x))dx=&
 \int_{(0,1)^n} g(A^{-1}(\varphi(x)-B) A^{-T})dx\\
 =&
 \int_{(0,1)^n} g(\psi(A^{-1}x))dx
 =
 \det A\int_{A^{-1}(0,1)^n} g(\psi(y))dy.
\end{split}
\end{equation}
The function $\psi$ is $A^{-1}(0,1)^n$-periodic and has average $A^{-1}(\xi-B)A^{-T}$. The maps $u_j(x):=\psi(jx)$ are divergence-free and converge weakly in $L^{\infty}(\R^n;\Rnsym)$ to their average, which is $A^{-1}(\xi-B)A^{-T}$. The functions $x\mapsto g(u_j(x))=g(\psi(jx))$ are equally periodic and converge weakly to their average, which is the last expression in the previous equation. Since $g$ is \sdqclong, recalling the lower semicontinuity (Lemma \ref{lemmalowersem}) we conclude
\begin{equation}
g(A^{-1}(\xi-B)A^{-T})\le \det A\int_{A^{-1}(0,1)^n} g(\psi(y))dy,
\end{equation}
and recalling the definition of $g$ and the previous computation this gives
\begin{equation}
f(\xi)\le \int_{(0,1)^n} f(\varphi(x))dx.
\end{equation}
Therefore, $f$ is \sdqclong. This concludes the proof.
\end{proof}

\begin{lemma}\label{lemmaranktwo}
Let $K\subseteq\Rnsym$ be compact. If $A,B\in K^\sdqc$ and $\rank(A-B)<n$ then $\lambda A+(1-\lambda)B\in K^\sdqc$ for all $\lambda\in[0,1]$. The corresponding assertion holds for $\Kinfty$.
\end{lemma}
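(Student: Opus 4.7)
The plan is to combine the $\Lambda$-convexity of symmetric $\div$-quasiconvex functions (Lemma \ref{lemmalambdaconvex}) with the separation-by-test-functions characterization of the hulls (Definition \ref{defKsdqc}, Lemma \ref{smallestsdqc}). The key translation is that for square matrices $\rank(A-B)<n$ is equivalent to $\det(A-B)=0$, which is precisely the hypothesis under which Lemma \ref{lemmalambdaconvex} upgrades \sdqclong-ity into ordinary $\lambda$-convexity of every \sdqclong\ function along the segment from $A$ to $B$.

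I would treat $\Kinfty$ first. Fix $\lambda\in[0,1]$, set $\xi:=\lambda A+(1-\lambda)B$, and argue by contradiction: assume $\xi\notin\Kinfty$. By Lemma \ref{smallestsdqc} the set $\Kinfty$ is itself \sdqclong, so Definition \ref{defKsdqc} produces a \sdqclong\ function $g\in C^0(\Rnsym;[0,\infty))$ with $g(\xi)>\max g(\Kinfty)$. Since $\det(A-B)=0$, Lemma \ref{lemmalambdaconvex} yields
\begin{equation*}
  g(\xi)\le \lambda g(A)+(1-\lambda)g(B)\le \max g(\Kinfty),
\end{equation*}
using that $A,B\in\Kinfty$. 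This contradicts the choice of $g$, so $\xi\in\Kinfty$.

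For $K^\sdqc$ the argument is essentially the same. Fix any $p\in(1,\infty)$; by Lemma \ref{smallestsdqc} and the identification $K^\sdqc=\Kp$, the set $K^\sdqc$ is $p$-\sdqclong, so if $\xi=\lambda A+(1-\lambda)B\notin K^\sdqc$ one obtains a separating \sdqclong\ $g\in C^0(\Rnsym;[0,\infty))$ of $p$-growth with $g(\xi)>\max g(K^\sdqc)$. The growth bound plays no role in the estimate above, so the same $\Lambda$-convexity inequality produces a contradiction. I do not anticipate a real obstacle here: the proof is a one-line application of $\Lambda$-convexity once $A$, $B$ and $\xi$ have been separated by a single test function, and the only point requiring care is matching the rank hypothesis to the determinant hypothesis of Lemma \ref{lemmalambdaconvex}, which is immediate for $n\times n$ matrices.
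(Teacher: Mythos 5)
Your proof is correct and rests on the same core mechanism as the paper's: Lemma \ref{lemmalambdaconvex} applied with $\det(A-B)=0$. The paper argues a bit more directly (for $\Kp$ it works with $\Qsd f_p$ itself, noting $\Qsd f_p(A)=\Qsd f_p(B)=0$ and $\Qsd f_p\ge0$ force $\Qsd f_p(\xi)=0$; for $\Kinfty$ it bounds $f(\xi)$ by $\max f(K)$ for every \sdqclong\ $f$ without contradiction), whereas you route through the separation property of Lemma \ref{smallestsdqc} and argue by contradiction — a cosmetic difference, not a substantive one.
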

\begin{proof}
The proof follows immediately from the definition and Lemma \ref{lemmalambdaconvex}. Indeed, the assumption gives $\Qsd f_p(A)=\Qsd f_p(B)=0$. Since $\Qsd f_p$ is \sdqclong, it is convex in the direction of $B-A$, and $\Qsd f_p(\lambda A+(1-\lambda)B)=0$.

In the case of $\Kinfty$, we consider any \sdqclong\ function $f\in C^0(\Rnsym;[0,\infty))$, and deduce as above $f(\lambda A+(1-\lambda)B)\le\lambda f(A)+(1-\lambda) f(B)\le \max f(\Kinfty)$. By the definition of $\Kinfty$, we obtain $\max f(\Kinfty)=\max f(K)$ and, therefore, $f(\lambda A+(1-\lambda)B)\le \max f(K)$.
\end{proof}

In closing this section, we present an explicit example in which $K$ consists of two matrices.
\begin{lemma}\label{lemmatwomatrix}
Let $K:=\{A,B\}\subseteq\Rnsym$. If $\rank(A-B)=n$, then $K^\sdqc=\Kinfty=K$. Otherwise, $K^\sdqc=\Kinfty=[A,B]$, where $[A,B]$ is the segment with endpoints $A$ and $B$.
\end{lemma}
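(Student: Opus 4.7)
The proof splits along the statement.

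If $\rank(A-B)<n$, then $A,B\in K\subseteq K^\sdqc\cap \Kinfty$ and Lemma~\ref{lemmaranktwo} gives $[A,B]\subseteq K^\sdqc\cap\Kinfty$. Conversely $[A,B]$ is convex, so $g(\sigma):=\dist^p(\sigma,[A,B])$ is convex (hence \sdqclong), non-negative, has $p$-growth, and vanishes exactly on $[A,B]$; therefore $[A,B]$ is both \sdqclong\ and $p$-\sdqclong, and the minimality in Lemma~\ref{smallestsdqc} yields $K^\sdqc,\Kinfty\subseteq[A,B]$, hence equality.

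Suppose instead $\rank(A-B)=n$. By Lemma~\ref{lemmapqsimple}, $K\subseteq\Kinfty\subseteq K^\sdqc$, so it suffices to show $K^\sdqc\subseteq\{A,B\}$. Fix $\xi\notin\{A,B\}$ and any $p\in(1,\infty)$, and assume for contradiction $\Qsd f_p(\xi)=0$. Pick $\varphi_j\in C^\infty_\per(\torus;\Rnsym)$ with $\div\varphi_j=0$, $\int_\torus\varphi_j\,dx=\xi$, and $\int_\torus f_p(\varphi_j)\,dx\to 0$. Let $\pi(\sigma)\in\{A,B\}$ be a nearest-point projection and set $r_j:=\varphi_j-\pi(\varphi_j)$, so $\|r_j\|_{L^p}\to 0$. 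Writing $\pi(\varphi_j)=B+(A-B)\chi_{E_j}$ with $E_j:=\{\pi(\varphi_j)=A\}$, the identity $\div\varphi_j=0$ reads, in the sense of distributions,
\begin{equation*}
(A-B)D\chi_{E_j}=-\div r_j,
\end{equation*}
and invertibility of $A-B$ yields $D\chi_{E_j}\to 0$ in $W^{-1,p}(\torus;\R^n)$.

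The crux, and the step I expect to be the principal obstacle, is to upgrade this to \emph{strong} $L^p$-convergence of $\chi_{E_j}$ to a constant. This requires the Poincar\'e-type inequality $\|u-\bar u\|_{L^p(\torus)}\le C\|Du\|_{W^{-1,p}(\torus;\R^n)}$, valid for $p\in(1,\infty)$, which I would prove by duality: for mean-zero $\phi\in L^{p'}(\torus)$, the field $V:=-D\Delta^{-1}\phi$ solves $\div V=\phi$ with $\|V\|_{W^{1,p'}}\le C\|\phi\|_{L^{p'}}$ by Calder\'on--Zygmund on the torus, and then $\int u\phi\,dx=-\langle Du,V\rangle$ gives the bound upon taking the supremum over such $\phi$. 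This is exactly where the restriction $p\in(1,\infty)$ is used. Granted $\chi_{E_j}-\bar\chi_j\to 0$ in $L^p$, extract a subsequence with $\bar\chi_j\to c\in[0,1]$ and then a further subsequence with $\chi_{E_j}(x)\to c$ for a.e.\ $x$; since $\chi_{E_j}(x)\in\{0,1\}$, necessarily $c\in\{0,1\}$. Averaging and passing to the limit in $\xi=\int\pi(\varphi_j)\,dx+\int r_j\,dx$ yields $\xi=B+(A-B)c\in\{A,B\}$, contradicting $\xi\notin K$. This proves $K^\sdqc=K$, and $\Kinfty=K$ then follows from the chain $K\subseteq\Kinfty\subseteq K^\sdqc=K$.
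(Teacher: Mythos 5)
Your proof is correct, but in the non-degenerate case it takes a genuinely different route from the paper. For $\rank(A-B)<n$ both arguments are identical: Lemma~\ref{lemmaranktwo} for the inner bound and the convex function $\dist^p(\cdot,[A,B])$ for the outer bound. For $\rank(A-B)=n$, however, the paper first applies the change-of-variables Lemma~\ref{lemmachangevariables} to reduce to $A=\Id$, $B=-\Id$, and then separates interior points $t\Id$, $t\in(-1,1)$, using the explicit \sdqclong\ Tartar function $f(\xi):=\bigl((n-1)|\xi|^2-(\Tr\xi)^2+n\bigr)_+$ from Lemma~\ref{lemmatartar}; this is a purely algebraic, one-line separation. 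You instead run a compensated-compactness argument directly on a minimizing sequence: projecting $\varphi_j$ onto $\{A,B\}$, writing $\pi(\varphi_j)=B+(A-B)\chi_{E_j}$, exploiting invertibility of $A-B$ to convert $\div\varphi_j=0$ into smallness of $D\chi_{E_j}$ in $W^{-1,p}$, and then invoking the Ne\v{c}as (negative-norm Poincar\'e) inequality to upgrade this to strong $L^p$ compactness of $\chi_{E_j}$, which forces the volume fraction to converge to $0$ or $1$. Both proofs are valid; each uses one nontrivial external ingredient (Tartar's quadratic \sdqclong\ function for the paper, the Ne\v{c}as inequality for you). The paper's route is shorter and fits the toolkit that recurs throughout Section~\ref{sec:example}, while yours is more structural: it isolates the rigidity mechanism (no rank-$(n-1)$ connections plus the divergence constraint preclude oscillation) and does not need the auxiliary change-of-variables lemma. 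One small cosmetic point: the sign in $V:=-D\Delta^{-1}\phi$ gives $\div V=-\phi$, but this is immaterial to the duality estimate.
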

\begin{proof}
The function $f(\xi):=\dist(\xi,[A,B])$ is convex, hence \sdqclong, therefore $K^\sdqc\subseteq[A,B]$.

If $\rank(B-A)<n$, Lemma \ref{lemmaranktwo} shows that $[A,B]\subseteq \Kinfty\subseteq K^\sdqc$ and concludes the proof.

Assume now that $\rank(B-A)=n$. By Lemma \ref{lemmachangevariables}, it suffices to consider the case $A=\Id$, $B=-\Id$ and we need only show that no matrix of the form $t\Id$, $t\in (-1,1)$, belongs to $K^\sdqc$. Let $f(\xi):=((n-1)|\xi|^2-(\Tr \xi)^2+n)_+$. Lemma \ref{lemmatartar} implies that $f$ is \sdqclong, and we verify that $f(\Id)=f(-\Id)=0$. However, $f(t\Id)=n(1-t^2)>0$ for all $t\in(-1,1)$, hence $t\Id\not\in K^\sdqc$.
\end{proof}

\subsection{Truncation of symmetric divergence-free fields}
\label{sectrunc}

In the remainder of this Section, we prove that $K^{(p)}$ does not depend on $p$, for $p\in(1,\infty)$. This proof requires truncation and approximation of vector fields that satisfy differential constraints, which is made much easier by working with the corresponding potentials. Following \cite{Conti:2018}, we introduce a stress potential $\apot$, which is related to the field $\sigma$ by $\sigma=\Div\Div \apot$, in a sense we now make precise. Let $\Rvierst$ be the set of $\zeta\in \R^{n\times n\times n\times n}$ such that
\begin{equation}\label{eqdefRvierst}
 \zeta_{ijhk}=\zeta_{jikh}=-\zeta_{ihjk} \hskip5mm \text{ for all } i,j,k,h\in\{1, 2, \dots, n\}.
\end{equation}
For $\apot\in L^1_\loc(\R^n;\Rvierst)$ we define the distribution
\begin{equation}
 (\Div\Div \apot)_{ij} = \sum_{h,k}\partial_{h}\partial_{k} \apot_{ijhk}.
\end{equation}
We observe that, by (\ref{eqdefRvierst}), $\Div (\Div \Div \apot)=0$ and $\Div \Div \apot=(\Div \Div \apot )^T$. Therefore, every potential generates a divergence-free symmetric matrix field.

In order to construct potentials, we start from a fixed matrix $M\in\R^{n\times n}_\sym$ and define $\apot^M:\R^n\to\Rvierst$ as
\begin{equation}\label{eqdefaM}
\begin{split}
 \apot^M(x)_{ijhk}=\frac1{n(n-1)} &\bigl( M_{ij}x_hx_k+M_{hk}x_ix_j
 - M_{ih}x_jx_k - M_{kj}x_hx_i \bigr).
\end{split}
\end{equation}
A straightforward computation shows that $\Div\Div \apot^M=M$, with $|\apot^M|(x)\le 2|x|^2|M|$, $|D\apot^M|(x)\le 4|x|\, |M|$, $|D^2\apot^M|(x)\le 4|M|$ for all $x\in\R^n$, $n\ge 2$. Working in Fourier space, this procedure can be generalized to any divergence-free symmetric matrix field.

\begin{lemma}\label{lemmapotentials}
\begin{enumerate}
\item \label{lemmapotentialssmooth}
Let $w\in C^\infty_\per((0,1)^n;\Rnsym)$ with $\div w=0$ and $\int_{(0,1)^n} w\, dx=0$. Then, there is $\apot\in C^\infty_\per((0,1)^n;\Rvierst)$ such that $\Div\Div\apot=w$. The map $w\mapsto \apot$ is linear.
\item\label{lemmapotentialsLp}
Let $w\in L^p((0,1)^n;\Rnsym)$ for some $p\in (1,\infty)$, $\div w=0$, $\int_{(0,1)^n} w\, dx=0$. Then, there is $\apot\in W^{2,p}_\per((0,1)^n;\Rvierst)$, with $\|D^2\apot\|_{p}\le c \|w\|_p$ and $\div\div\apot=w$. The map $w\mapsto\apot$ is linear and extends the map in \ref{lemmapotentialssmooth}.
\item\label{lemmapotentialsLpLq}
Let $w=w_p+w_q$, with $w_p\in L^p((0,1)^n;\Rnsym)$, $w_q\in L^q((0,1)^n;\Rnsym)$ for some $p,q\in (1,\infty)$, $\div w=0$, $\int_T w_p\, dx=\int_T w_q\, dx=0$. Then, there are $\apot_p\in W^{2,p}_\per((0,1)^n;\Rvierst)$, with $\|D^2\apot_p\|_{p}\le c \|w_p\|_p$, and $\apot_q\in W^{2,q}_\per((0,1)^n;\Rvierst)$, with $\|D^2\apot_q\|_{q}\le c \|w_q\|_q$, such that $\Div\Div(\apot_p+\apot_q)=w$.
\end{enumerate}
\end{lemma}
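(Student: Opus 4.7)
The plan is to work on the Fourier side and define $\apot$ by a homogeneous order $-2$ multiplier applied to $\hat w_\lambda$, structured so that the algebraic symmetries (\ref{eqdefRvierst}) are automatic and $\Div\Div\apot=w$ upon using the divergence-free condition $\hat w_\lambda\lambda=0$. Concretely, for $w$ as in (i) I would expand $w(x)=\sum_{\lambda\in 2\pi\Z^n\setminus\{0\}}\hat w_\lambda e^{i\lambda\cdot x}$ and set $\apot(x):=\sum_\lambda\hat\apot_\lambda e^{i\lambda\cdot x}$ with
\[
\hat\apot_{\lambda,ijhk}:=-\frac{1}{|\lambda|^4}\bigl(\hat w_{\lambda,ij}\lambda_h\lambda_k+\hat w_{\lambda,hk}\lambda_i\lambda_j-\hat w_{\lambda,ih}\lambda_j\lambda_k-\hat w_{\lambda,kj}\lambda_h\lambda_i\bigr),
\]
the Fourier-space analogue of (\ref{eqdefaM}). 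Membership $\hat\apot_\lambda\in\Rvierst$ follows from the symmetry of $\hat w_\lambda$ by exactly the same algebraic identities used to check $\apot^M\in\Rvierst$ for $M\in\Rnsym$. Computing the Fourier coefficient $-\sum_{h,k}\lambda_h\lambda_k\hat\apot_{\lambda,ijhk}$ of $\Div\Div\apot$, and using $\hat w_\lambda\lambda=0$ together with $\hat w_\lambda^T=\hat w_\lambda$, the last three terms drop out and one is left with $\hat w_{\lambda,ij}$, so $\Div\Div\apot=w$. The pointwise estimate $|\hat\apot_\lambda|\le 4|\hat w_\lambda|/|\lambda|^2$ combined with rapid decay of $\hat w_\lambda$ yields $\apot\in C^\infty_\per$, and linearity of $w\mapsto\apot$ is immediate.

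For (ii), the entries of the map $w\mapsto D^2\apot$ are Fourier multipliers whose symbols are linear combinations of $\lambda_r\lambda_s\lambda_h\lambda_k/|\lambda|^4$, each homogeneous of degree zero and smooth on $\R^n\setminus\{0\}$, and hence satisfying the H\"ormander--Mikhlin condition. By the Mikhlin multiplier theorem for Fourier series on $(0,1)^n$ (either applied directly, or via transference from $\R^n$), the corresponding operator is bounded on $L^p((0,1)^n)$ for every $p\in(1,\infty)$, which gives $\|D^2\apot\|_p\le c\|w\|_p$. The identity $\Div\Div\apot=w$ extends from (i) by approximating $w\in L^p$ with smooth periodic divergence-free zero-mean fields via mollification and passing to the limit.

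For (iii), observe that the Fourier formula for $\hat\apot_\lambda$ defines a bounded linear map $\mathcal{R}$ from the zero-mean fields in $L^r((0,1)^n;\Rnsym)$ to $W^{2,r}_\per((0,1)^n;\Rvierst)$ for every $r\in(1,\infty)$, irrespective of any divergence-free assumption on the input. Without that hypothesis, the same calculation shows that $T:=\Div\Div\circ\mathcal{R}$ is precisely the $L^2$-orthogonal projection onto divergence-free symmetric zero-mean fields, the additional terms containing $\hat w_\lambda\lambda$ providing the Leray-type correction. Given the decomposition $w=w_p+w_q$ with $\div w=0$, set $\apot_p:=\mathcal{R}w_p$ and $\apot_q:=\mathcal{R}w_q$; then $\Div\Div(\apot_p+\apot_q)=T(w_p)+T(w_q)=T(w)=w$ by linearity and the divergence-free hypothesis on $w$, and the bounds $\|D^2\apot_p\|_p\le c\|w_p\|_p$, $\|D^2\apot_q\|_q\le c\|w_q\|_q$ follow from (ii) applied at $r=p$ and $r=q$.

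The main technical ingredient is the $L^p$-boundedness of the Mikhlin-type multipliers appearing in (ii); this is what restricts the statement to $p\in(1,\infty)$ and, as remarked at the start of the section, is the reason Theorem \ref{theorempq} does not extend to the endpoints. The remaining steps---verifying the $\Rvierst$ symmetries, the Fourier algebra that produces $w$ from $\Div\Div\apot$, and the projection identity for $T$---are routine algebraic computations.
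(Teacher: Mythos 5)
Your proof is correct and follows the same route as the paper: the explicit Fourier-multiplier formula for $\hat\apot(\lambda)$ mimicking $\apot^M$, homogeneous degree-zero symbols for $D^2\apot$ giving the $L^p$ bounds via a Mikhlin/Calder\'on--Zygmund argument (the paper instead cites \cite[Prop.~2.13]{FonsecaMueller1999}, which amounts to the same), and linearity of the multiplier operator (applied to not-necessarily-divergence-free inputs) for part (iii). Your explicit identification of $\Div\Div\circ\mathcal{R}$ as the Leray-type orthogonal projection is a correct and slightly more transparent way to phrase the paper's terse remark that linearity of the potential map yields $\Div\Div(\apot_p+\apot_q)=w$.
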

We stress that \ref{lemmapotentialsLpLq} does not assert $\Div\Div \apot_p=w_p$.
\begin{proof}
\ref{lemmapotentialssmooth}:
Let $\hat w:2\pi \Z^n\to \Rnsym$ be the Fourier coefficients of $w$, so that
\begin{equation}
 w(x)=\sum_{\lambda\in 2\pi \Z^n} \hat w(\lambda) e^{i\lambda\cdot x}.
\end{equation}
The assumptions on $w$ imply $\hat w(0)=0$, $\hat w_{ij}=\hat w_{ji}$ and $\sum_j\hat w_{ij}\lambda_j=0$.
We define, in analogy to (\ref{eqdefaM}), $\hat\apot(0)=0$ and, for $\lambda\in 2\pi\Z^n\setminus\{0\}$,
\begin{equation}\label{eqdefaMhat}
 \hat\apot(\lambda)_{ijhk}=\frac1{|\lambda|^4} \bigl( \hat w_{ij}\lambda_h\lambda_k+\hat w_{hk}\lambda_i\lambda_j
 - \hat w_{ih}\lambda_j\lambda_k - \hat w_{jk}\lambda_i\lambda_h \bigr).
\end{equation}
We easily verify that $\hat\apot(\lambda)\in \Rvierst$ and $\sum_{hk}\lambda_h\lambda_k \hat\apot_{ijhk}(\lambda)=\hat w_{ij}(\lambda)$ for all $\lambda$. Since the decay of the coefficients $\hat\apot$ is faster than the decay of the coefficients $\hat w$, the Fourier series
\begin{equation}
 \apot(x)=\sum_{\lambda\in 2\pi \Z^n} \hat \apot(\lambda) e^{i\lambda\cdot x}
 \end{equation}
defines a smooth periodic function $\apot\in C^\infty_\per(T;\Rvierst)$ such that $\Div\Div\apot=w$.

\ref{lemmapotentialsLp}:
Let $T:C^\infty_\per(T;\Rnsym)\to C^\infty_\per(T;\Rvierst)$, $w\mapsto Tw:=\apot_w$, be the linear operator defined above. We consider the operator $D^2T : C^\infty_\per(T;\Rnsym)\to C^\infty_\per(T;\R^{n^6})$, defined by $w\mapsto D^2Tw:=D^2\apot_w$. Its Fourier symbol is smooth on $S^{n-1}$ and homogeneous of degree zero. By \cite[Proposition 2.13]{FonsecaMueller1999} (which is based on \cite[Ex. (iii), page 94]{Stein70} and \cite[Cor. 3.16, p. 263]{SteinWeiss1971}) the operator $D^2T$ can be extended to a continuous operator from $L^p$ to $L^p$ for any $p\in (1,\infty)$. By Poincar\'e, and using the fact that $Tw$ and $DTw$ have average zero, the estimate in $W^{2,p}$ follows.

\ref{lemmapotentialsLpLq}:
We define $\apot_p:=Tw_p$, $\apot_q:=Tw_q$. The estimates on the norm follow as for \ref{lemmapotentialsLp}. By linearity of the operator $T$, the differential condition holds as well. We remark that the $L^p$ extension and the $L^q$ extension of the operator defined on smooth functions coincide on $L^p\cap L^q$. Therefore, we can use the symbol $T$ for the operator defined on $L^p\cup L^q$.
\end{proof}

A crucial element in subsequent steps is the following truncation result, which is a minor variant of those given in Sect. 6.6.2 of \cite{EvansGariepy} and Prop. A.1 of \cite{FJM02b} and is based on Zhang's Lemma \cite{Zhang1992}.
\newcommand\utrunc{u}
\newcommand\vtrunc{v}
\begin{lemma}\label{lemmatrunc}
Let $\utrunc\in W^{2,p}_\per(\torus;V)$, $M>0$, $V$ a finite-dimensional vector space. Then, there is $\vtrunc\in W^{2,\infty}_\per(\torus;V)$ such that
\begin{enumerate}
 \item $\displaystyle \|D^2\vtrunc\|_{2,\infty}\le c M$;
 \item $\displaystyle |\{\vtrunc\ne \utrunc\}| \le \frac{c}{M^p} \int_{|\utrunc|+|D\utrunc|+|D^2\utrunc|>M} |\utrunc|^p+|D\utrunc|^p+|D^2\utrunc|^p dx$.
 \end{enumerate}
The constant depends only on $n$ and $V$.
 \end{lemma}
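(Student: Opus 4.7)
The plan is to use the standard maximal-function truncation for Sobolev functions adapted to the second-order setting, as in Zhang \cite{Zhang1992} combined with a Whitney-type extension for $C^{1,1}$ jets. Throughout, I will regard $u$ as defined on $\R^n$ by periodic extension and work with the periodic Hardy--Littlewood maximal function $M^\ast$.

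\textbf{Step 1 (good set via the maximal function).} Set $g := |u| + |Du| + |D^2u| \in L^p_\per(\torus)$ and define the good set
\begin{equation*}
  E_M := \{ x \in \torus : M^\ast g(x) \le \alpha M \}
\end{equation*}
for a constant $\alpha>1$ fixed below. To control the size of $\torus\setminus E_M$, I split $g = g_1 + g_2$ with $g_1 := g\,\mathbf 1_{\{g\le M\}}$ and $g_2 := g\,\mathbf 1_{\{g>M\}}$. Since $M^\ast g_1 \le \|g_1\|_\infty \le M$ pointwise, choosing $\alpha=2$ gives $\{M^\ast g > 2M\} \subseteq \{M^\ast g_2 > M\}$. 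The $L^p$-boundedness of $M^\ast$ (valid for $p\in(1,\infty)$) combined with Chebyshev yields
\begin{equation*}
  |\torus \setminus E_M| \le \frac{c}{M^p}\|g_2\|_p^p = \frac{c}{M^p}\int_{\{|u|+|Du|+|D^2u|>M\}} \bigl(|u|^p + |Du|^p + |D^2u|^p\bigr)\,dx,
\end{equation*}
which is the bound claimed in (ii).

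\textbf{Step 2 (pointwise Taylor estimates on $E_M$).} On $E_M$, the standard pointwise Sobolev estimates via maximal functions (Morrey/Campanato-type, cf.\ \cite[Sect.~6.6.2]{EvansGariepy}) give that at every Lebesgue point $x,y\in E_M$ of $u,Du,D^2u$,
\begin{equation*}
  |Du(x)-Du(y)| \le c M |x-y|, \qquad |u(x)-u(y)-Du(y)\cdot(x-y)| \le c M |x-y|^2,
\end{equation*}
and similar bounds on $|u|$ and $|Du|$ are immediate from $M^\ast g \le 2M$. Thus $(u,Du)$ restricted to $E_M$ forms a $C^{1,1}$-Whitney jet with Lipschitz constant $cM$ and sup-norm $cM$.

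\textbf{Step 3 (Whitney extension to $\torus$).} Apply the Whitney extension theorem for $C^{1,1}$ jets (equivalently, a jet-valued Kirszbraun extension) to the data $(u,Du)|_{E_M}$ to obtain $v\in C^{1,1}(\R^n;V)$ with $v=u$ and $Dv=Du$ on $E_M$ and $\|D^2 v\|_\infty \le cM$. Periodicity is preserved because the periodic extension of $u$ is invariant under the lattice $\Z^n$, and the Whitney extension applied to a translation-invariant jet produces a translation-invariant function; alternatively one can perform the extension directly on the compact metric space $\torus$, since the $C^{1,1}$-Whitney extension only uses the local geometry. This yields the bound in (i); on $E_M$ we have $v=u$ a.e., so $\{v\ne u\}\subseteq\torus\setminus E_M$ up to a null set, giving (ii).

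\textbf{Main obstacle.} The principal technical point is to rigorously establish the second-order pointwise Taylor-type inequality on $E_M$ from the bound $M^\ast(|D^2u|)\le 2M$, and then to run the Whitney extension for $C^{1,1}$ jets while preserving $\Z^n$-periodicity. Both ingredients are known (the first from the potential-theoretic characterization of Sobolev functions, the second from classical Whitney theory), but combining them cleanly—and ensuring constants depend only on $n$ and $\dim V$—is where the care is required. Everything else is bookkeeping.
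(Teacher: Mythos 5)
Your proposal is correct and is structurally the same as the paper's argument: a bad set of controlled measure defined through ball averages of $|u|+|Du|+|D^2u|$, Whitney-type jet estimates on its complement, a Whitney extension, and the observation that the standard Whitney construction applied to periodic data produces a periodic output. The differences are in the implementation. For the measure bound you use the Hardy--Littlewood maximal operator together with its strong $(p,p)$ bound (for $p=1$ you would have to switch to the weak $(1,1)$ estimate), whereas the paper runs a direct Vitali/Besicovitch covering argument on the set where some ball average exceeds $2M$; both yield (ii). For the jet estimates you invoke the second-order pointwise inequality $|u(x)-u(y)-Du(y)\cdot(x-y)|\le c\,|x-y|^2\bigl(M^\ast|D^2u|(x)+M^\ast|D^2u|(y)\bigr)$ and then apply the $C^{1,1}$ Whitney extension, which indeed suffices for $W^{2,\infty}$ with $\|D^2v\|_\infty\le cM$; the paper instead derives the Taylor estimates by hand (two applications of Poincar\'e plus a telescoping over dyadic radii, and an Egorov-type step to obtain a uniform modulus) and uses Whitney in the $C^2$ category, so your route is slightly leaner since no modulus of continuity is needed. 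The one point you must tighten is exactly your self-identified ``main obstacle'': the second-order pointwise estimate is not contained in the reference you cite (Sect.~6.6.2 of Evans--Gariepy treats the first-order case), and it does not follow by naively iterating the first-order estimate applied to $Du$, since $Du$ is only controlled on the good set; you need either a correct citation (the pointwise Taylor inequalities for higher-order Sobolev functions of Bojarski and Haj\l asz, or Liu's $C^2$-Lusin approximation) or the Poincar\'e/telescoping argument that the paper carries out. With that ingredient supplied, plus the trivial degenerate cases (good set empty: take $v=0$; bad set null: take $v=u$), your proof is complete, and your periodicity remark coincides with the one made in the paper.
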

The above estimates immediately imply
\begin{equation}
 \|D^2\utrunc-D^2\vtrunc\|_p^p \le c\int_{|\utrunc|+|D\utrunc|+|D^2\utrunc|>M} |\utrunc|^p+|D\utrunc|^p+|D^2\utrunc|^p dx.
\end{equation}
\begin{proof}
After choosing a basis and working componentwise, we can assume $V=\R$. We define $h:=(\utrunc,D\utrunc,D^2\utrunc)$ and
\begin{equation}
E_M:= \{ x\in \torus: \exists r\in (0,\sqrt n): \strokedint_{B_r(x)} |h(y)| dy \ge 2M\}.
\end{equation}
Here and subsequently, $\strokedint_\Omega f dx := |\Omega|^{-1}\int_\Omega f dx$. If $E_M$ is a null set, then it suffices to take $\vtrunc=\utrunc$ and the proof is concluded. Otherwise, using the Vitali or the Besicovitch covering theorem it follows that the volume of $E_M$ obeys (ii). We can further enlarge $E_M$ by a null set and assume that all points of $\torus\setminus E_M$ are Lebesgue points of $h$.

For $x\in \torus\setminus E_M$ and $r\in (0,\sqrt n)$, we define
\begin{equation}
 \eta_r(x):=\strokedint_{B(x,r)} |D^2\utrunc(y) -D^2\utrunc(x)| dy\,.
\end{equation}
From the definition of $E_M$ we obtain $0\le \eta_r \le 4M$ for all $r$ and $x$ and $\eta_r\to0$ pointwise on $(0,1)^n\setminus E_M$. Therefore, there is a set $\tilde E_M$ with $|\tilde E_M|\le |E_M|$ such that $\eta_{r}\to0$ uniformly in $\torus\setminus E_M\setminus \tilde E_M$. We define $S_M:=(0,1)^n\setminus E_M\setminus \tilde E_M$.

We have shown that there is $\omega:(0,\infty)\to(0,4M]$ nondecreasing with $\omega_r\to0$ such that
\begin{equation}
 \strokedint_{B(x,r)} |D^2\utrunc(y) -D^2\utrunc(x)| dy\le \omega_r \text{ for all } x\in
 S_M, r\in (0,\sqrt n)\,.
\end{equation}
Fix now $x\in S_M$. By Poincar\'e's inequality, for any $r\in (0,\sqrt n)$ there is $A_r=A_r(x)\in\R^n$ such that
\begin{equation}
 \strokedint_{B(x,r)} |D\utrunc(y) -A_r-D^2\utrunc(x)(y-x)| dy\le cr\omega_r
 \text{ for all } r\in (0,\sqrt n)\,.
\end{equation}
Being $x$ a Lebesgue point of $D\utrunc$, we have $\lim_{r\to0} A_r=D\utrunc(x)$. Comparing the above equation on the balls $B(x,r)$ and $B(x,r/2)$ we obtain $|A_r-A_{r/2}|\le cr\omega_r$, which (summing the geometric series $A_{2^{-k}r}-A_{2^{k+1}r}$) implies $|A_r-D\utrunc(x)|\le c r\omega_r$ and
\begin{equation}
 \strokedint_{B(x,r)} |D\utrunc(y)-D\utrunc(x) -D^2\utrunc(x)(y-x)| dy\le cr\omega_r \text{ for all } r\in (0,\sqrt n)\,.
\end{equation}
A second application of Poincar\'e's inequality yields
\begin{equation}
 \strokedint_{B(x,r)} |\utrunc(y) -b_r-D\utrunc(x)(y-x)-\frac12 D^2\utrunc(x)(y-x)(y-x)| dy\le cr^2\omega_r
 \text{ for all } r\in (0,\sqrt n)\,,
\end{equation}
for some $b_r=b_r(x)\in\R$, and the same argument as above leads to
\begin{equation}
 \strokedint_{B(x,r)} |\utrunc(y) -P_x(y)| dy\le c r^2 \omega_r \text{ for all
 } r\in (0,\sqrt n)\,.
\end{equation}
where $P_x$ is the second-order Taylor polynomial of $\utrunc$ centered at $x$.

For $x,x'\in S_M$ and $r=|x-x'|$, we have
\begin{equation}
 \strokedint_{B(x,r)\cap B(x',r)} |P_x-P_{x'}| dy\le c r^2 \omega_r\,.
\end{equation}
Since the space of polynomials of degree two is finite dimensional, this is an estimate on the difference of the coefficients and also a uniform estimate on the difference of the two polynomials. The conclusion then follows from Whitney's extension theorem. We remark that the standard construction in Whitney's extension theorem, if given periodic inputs, produces periodic outputs, and that, if $E_M$ is not a null set, this procedure actually produces a $C^2$ function.
\end{proof}

We are finally in a position to prove the other inequality in Theorem \ref{theorempq}. Specifically, we show the following.
\begin{lemma}\label{lemmapqdifficult}
Let $K\subseteq\Rnsym$ be compact. Then, $K^{(p)}\subseteq K^{(q)}$ for any $p,q$ with $1< p \le q < \infty$.
\end{lemma}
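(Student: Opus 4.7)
The plan is to take a minimizing sequence for $\Qsd f_p(\xi)$, truncate it through its potential using Lemma~\ref{lemmatrunc} so as to produce an $L^\infty$-bounded replacement that still converges to $K$ in $L^p$, and then upgrade $L^p$-closeness to $L^q$-closeness by the pointwise bound $\dist^q \le (|\sigma|+R)^{q-p}\dist^p$.

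Concretely, I would start from $\xi\in K^{(p)}$ and pick $\varphi_k\in C^\infty_\per((0,1)^n;\Rnsym)$ divergence-free with mean $\xi$ and $\delta_k:=\int f_p(\varphi_k)\,dx\to0$. Compactness of $K$ yields a uniform $L^p$ bound on $\varphi_k-\xi$, so Lemma~\ref{lemmapotentials}\ref{lemmapotentialsLp} provides $\Theta_k\in W^{2,p}_\per((0,1)^n;\Rvierst)$ with $\div\div\Theta_k=\varphi_k-\xi$ and $\|\Theta_k\|_{W^{2,p}}\le C$, uniformly in $k$. Applying Lemma~\ref{lemmatrunc} at some level $M_k$ (to be chosen) produces $\tilde\Theta_k\in W^{2,\infty}_\per$ with $\|D^2\tilde\Theta_k\|_\infty\le cM_k$, $|\{\tilde\Theta_k\ne\Theta_k\}|\le c\eta_k(M_k)/M_k^p$, and $\|D^2\tilde\Theta_k-D^2\Theta_k\|_p^p\le c\,\eta_k(M_k)$, where $\eta_k(M):=\int_{\{g_k>M\}} g_k\,dx$ and $g_k:=|\Theta_k|^p+|D\Theta_k|^p+|D^2\Theta_k|^p$. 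Defining $\tilde\varphi_k:=\xi+\div\div\tilde\Theta_k$ gives a periodic, divergence-free field with mean $\xi$ and $\|\tilde\varphi_k\|_\infty\le |\xi|+cM_k$, differing from $\varphi_k$ by a term small in $L^p$.

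From these estimates I would bound
\[
\int f_p(\tilde\varphi_k)\,dx\le C\bigl(\delta_k+\eta_k(M_k)\bigr),\qquad
\int f_q(\tilde\varphi_k)\,dx\le C\,M_k^{q-p}\bigl(\delta_k+\eta_k(M_k)\bigr),
\]
where the second follows by splitting according to $\{\tilde\Theta_k=\Theta_k\}$ (where $|\varphi_k|\le CM_k$, giving $f_q\le CM_k^{q-p}f_p$) and its complement (where $|\tilde\varphi_k|\le CM_k$ and the set has measure $\le c\eta_k(M_k)/M_k^p$). A final mollification of $\tilde\Theta_k$ at small scale $\rho_k\to 0$ returns us to $C^\infty_\per$ test fields while preserving the divergence-free condition, the mean, and the above bounds up to arbitrarily small error. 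Lemma~\ref{lemmaqsdqc} together with the inequality implies $\xi\in K^{(q)}$ as soon as the right hand side tends to zero.

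The main obstacle — and really the whole content of the proof — is the choice of $M_k$ in such a way that $M_k^{q-p}\bigl(\delta_k+\eta_k(M_k)\bigr)\to 0$. Picking $M_k$ large makes the $L^\infty$ prefactor blow up; picking it small leaves $\eta_k(M_k)$ of order $\|g_k\|_1$. The rate at which $\eta_k(M)\to 0$ as $M\to\infty$ depends on $\Theta_k$, and bounded $W^{2,p}$-families are not automatically $p$-equi-integrable. I would handle this via a diagonal argument: since the initial $\varphi_k$ are smooth, $\Theta_k\in W^{2,\infty}$ and $\eta_k(M)=0$ for $M$ past a $k$-dependent threshold $L_k$, so one may choose $M_k$ just above $L_k$, making $\eta_k(M_k)=0$ and reducing the requirement to $L_k^{q-p}\delta_k\to 0$. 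This in turn can be arranged by first refining the minimizing sequence — replacing each $\varphi_k$ with a convolution at a scale carefully coupled to $\delta_k$ so that the resulting $L^\infty$ norms of $\Theta_k$ grow slowly enough — and then extracting a diagonal subsequence. The role of the hypothesis $p>1$ enters precisely here: Lemma~\ref{lemmapotentials}\ref{lemmapotentialsLp} relies on the Calderón–Zygmund bound for the potential operator, which fails at $p=1$ and $p=\infty$.
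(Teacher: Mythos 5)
Your framework (construct the potential, truncate it, upgrade $L^p$-closeness to $L^q$-closeness) matches the paper's, but you are missing the decomposition of the \emph{field} that makes the argument close, and your proposed workaround for the equi-integrability obstacle is not sound. You correctly identify that bounded $W^{2,p}$-families of potentials are not $p$-equi-integrable, so the $M_k$ one needs cannot be chosen uniformly. But your fix --- choosing $M_k$ beyond the $k$-dependent $L^\infty$ threshold $L_k$ so that $\eta_k(M_k)=0$ --- makes the truncation act trivially, so the estimate you then need, namely $L_k^{q-p}\delta_k\to 0$ (essentially $\|\varphi_k\|_\infty^{q-p}\int f_p(\varphi_k)\,dx\to 0$), is a statement about the \emph{original} minimizing sequence and is not justified. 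Mollification at a scale $\rho_k$ does not reduce the $L^\infty$ norm of $\varphi_k$ or of $\Theta_k$, and no concrete coupling of $\rho_k$ to $\delta_k$ is given that would produce the needed decay. As it stands, this is a genuine gap, not a routine diagonal argument.

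The insight that makes the paper's proof work and that your proposal lacks is to split the field (not just the potential) before building potentials: write $w_k=w_k^M+w_k^L$ with $w_k^M:=w_k\chi_{|w_k|<M}$ and $w_k^L:=w_k\chi_{|w_k|\ge M}$, where $M$ is a \emph{fixed} constant chosen so that $K\subseteq B_{M-1}$ and $|\xi|\le M-1$. Then $w_k^M$ is uniformly bounded in $L^\infty$ (hence in $L^{2q}$), so its potential is uniformly bounded in $W^{2,2q}$ and requires no truncation at all. The large part satisfies the pointwise bound $|w_k^L|\le M\,\dist(w_k,K)$ --- because on $\{|w_k|\ge M\}$ one has $\dist(w_k,K)\ge 1$ --- so $\|w_k^L\|_{L^p}^p\le M^p\delta_k\to 0$; hence its potential tends to zero in $W^{2,p}$, and truncating it at the \emph{fixed} level $cM$ gives something small in $W^{2,p}$ and bounded in $W^{2,\infty}$, therefore small in $W^{2,2q}$. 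With this decomposition the equi-integrability problem never arises, because the only part that gets truncated already has vanishing $L^p$ mass. If you want to salvage your write-up, introduce this split of $w_k$ at the outset and apply Lemma~\ref{lemmapotentials}\ref{lemmapotentialsLpLq} (with exponents $2q$ and $p$) to the two pieces; then the final estimate on $\int f_q$ goes through with a second, auxiliary parameter $N$ exactly as in the paper's step using uniform continuity of $f_p$ on bounded sets together with the $L^{2q}$ bound on the modified fields.
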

\begin{proof}
As usual, we define $f_p(\sigma):=\dist^p(\sigma,K)$ and, analogously, $f_q$. For brevity, we write $T=(0,1)^n$. Pick $\xi\in K^{(p)}$. Since $\Qsd f_p(\xi)=0$, by the definition (\ref{eqdefqsdf}) there is a sequence of functions $w_k\in C^\infty_\per(T;\Rnsym)$ with $\div w_k=0$, $\int_T w_k\, dx=\xi$ and $\int_T f_p(w_k(x),K)\, dx\to0$. We choose $M>0$ such that $K\subseteq B_{M-1}$ and $|\xi|\le M-1$ and define
\begin{equation}
 w_k^M:=
    w_k \chi_{|w_k|< M}
\hskip5mm\text{ and }\hskip5mm
w_k^L := w_k-w_k^M=w_k\chi_{|w_k|\ge M},
\end{equation}
where $\chi_{|w_k|< M}(x)=1$ if $|w_k|(x)< M$ and $0$ otherwise. Then, $\|w_k^M\|_{L^{2q}}\le \|w_k^M\|_{L^\infty}\le M$. Since $|\sigma|\ge M$ implies $\dist(\sigma,K)\ge 1$ we obtain
\begin{equation}
\begin{split}
|w_k^L|&=|w_k|\chi_{|w_k|\ge M} \le \dist(w_k, K)+(M-1)\chi_{|w_k|\ge M}\\
&\le M\dist(w_k,K)
\end{split}
\end{equation}
and, therefore, $\|w_k^L\|_{L^p}\to0$. Let $\apot_k^{M}\in W^{2,2q}_\per(T;\Rvierst)$ and $\apot_k^{L}\in W^{2,p}_\per(T;\Rvierst)$ be corresponding potentials obtained from $w_k^M-\int_T w_k^M\, dx\in L^{2q}$ and $w_k^L-\int_T w_k^L\, dx\in L^p$ using Lemma \ref{lemmapotentials}\ref{lemmapotentialsLpLq} with the exponents $2q$ and $p$. In particular, this implies $w_k=\xi+\div\div(\apot_k^M+\apot_k^L)$ with
\begin{equation}
 \| \apot_k^M \|_{2,2q}\le c M \hskip5mm\text{ and }\hskip5mm
 \| \apot_k^L \|_{2,p}\to0 \text{ as $k\to\infty$.}
\end{equation}
Let $\apot_k^T\in C^2(T;\Rvierst)$ be the truncation of $\apot_k^{L}$ obtained from Lemma \ref{lemmatrunc}, $\|\apot_k^T\|_{2,\infty}\le cM$. The above estimates show that $\|\apot_k^T\|_{2,p}\to0$ and, therefore, $\|\apot_k^T\|_{2,2q}\to0$. We define $w_k^*:=\xi+\div\div(\apot_k^M+\apot_k^T)\in L^{2q}$. Then, $w_k-w_k^*=\div\div(\apot_k^L-\apot_k^T)\to0$ in $L^p$.

We now proceed to prove that $\int_T f_q(w_k^*)dx\to0$ as $k\to\infty$. For every $N>M$, we write
\begin{equation}\label{eqsinpq}
f_q(w_k^*) \le (2N)^{q-p} f_p(w_k^*)\chi_{|w_k^*|<N} + (2|w_k^*|)^q\chi_{|w_k^*|\ge N}
\end{equation}
and treat the two terms separately. The second can be estimated as
\begin{equation}
\limsup_{k\to\infty} \int_{|w_k^*|\ge N} |w_k^*|^q dx \le \limsup_{k\to\infty}
 \frac{1}{N^q}\int_T |w_k^*|^{2q} dx \le \frac{ c M^{2q}}{N^q}.
\end{equation}
It remains to estimate the first term. For fixed $N$, the function $f_p$ is uniformly continuous on $B_N$, so there is $\delta_N>0$ such that $|\sigma|<N$, $|\sigma-\eta|<\delta_N$ imply $f_p(\sigma)\le f_p(\eta)+1/N^q$. Therefore, for all $\sigma,\eta\in\Rnsym$ we have
\begin{equation}
 f_p(\sigma)\chi_{|\sigma|<N} \le f_p(\eta) + \frac1{N^q} + (2N)^p \frac{|\sigma-\eta|^p}{\delta_N^p}.
\end{equation}
Setting $\sigma=w_k^*(x)$, $\eta=w_k(x)$, integrating, and recalling that $w_k-w_k^*\to0$ in $L^p$ yields
\begin{equation}\label{eqliswstpk}
 \begin{split}
 \limsup_{k\to\infty} \int_{|w_k^*|<N} f_p(w_k^*) dx \le &
 \limsup_{k\to\infty} \int_T f_p(w_k) dx + \frac1{N^q}\\
 &+  \frac{(2N)^p}{\delta_N^p}\limsup_{k\to\infty}
 \|w_k-w_k^*\|_p^p
 = \frac1{N^q}.
 \end{split}
\end{equation}
From (\ref{eqsinpq})--(\ref{eqliswstpk}), we conclude that
\begin{equation}
 \limsup_{k\to\infty} \int_T f_q(w_k^*) dx \le \frac1{N^p}+ \frac{ c M^{2q}}{N^q} ,
\end{equation}
for all $N>M$ and, therefore, $\int f_q(w_k^*) dx\to0$. Finally, by continuity and density we can replace $w_k^*$ by a sequence of smooth functions with the same properties (using mollification preserves the differential constraint, periodicity and the average), and therefore $\Qsd f_q(\xi)=0$.
\end{proof}

\section{Explicit relaxation for yield surfaces depending on the first two invariants}
\label{sec:example}
\subsection{General setting and main results}

In this section, we focus on the case of rotationally symmetric sets of strains in three dimensions. Lemma \ref{lemmachangevariables} implies that if $K\subseteq\Rtrsym$ is rotationally invariant, in the sense that $Q^TKQ=K$ for any $Q\in\SO(3)$, then also its \sdqclong\ hull is rotationally invariant, in the sense that $Q^TK^\sdqc Q=K^\sdqc$ for any $Q\in\SO(3)$, and the same for $\Kinfty$. We consider here the situation where $K$ is described by only two invariants, one corresponding to the pressure (the isotropic stress) and another to the deviatoric stress (a measure of the distance to diagonal matrices). We leave the case of generic rotationally invariant elastic domains for future work.

For $\sigma\in\Rtrsym$, we define the two variables
\begin{equation}\label{eqdefpqsigma}
 p(\sigma):=\frac13\Tr\sigma \text{ and } q(\sigma):=\frac{|\sigma-p\Id|}{\sqrt2}
\end{equation}
and denote $\Phi:\Rtrsym\to\R\times[0,\infty)$ the mapping $\Phi:=(p,q)$,
{so that
\begin{equation}
\Phi(\sigma)=\left(\frac13\Tr\sigma ,\frac{|\sigma-p\Id|}{\sqrt2}\right).
\end{equation}}%
We remark that $2q^2(\sigma)=|\sigma_D|^2$ where $\sigma_D:=\sigma-p\Id$ is the deviatoric part of $\sigma$. For example, for any $(p_*,q_*)\in\R\times[0,\infty)$ the matrices
\begin{equation}
 \xi_0:=\begin{pmatrix}
    p_*+q_*&0&0\\0&p_*-q_*&0\\0&0&p_*
    \end{pmatrix}
    \text{ and }
 \xi_1:=\begin{pmatrix}
    p_*&q_*&0\\q_*&p_*&0\\0&0&p_*
    \end{pmatrix}
\end{equation}
obey $\Phi(\xi_0)=\Phi(\xi_1)=(p_*,q_*)$.

Here, we consider sets $K$ that can be characterized by the values of these two invariants, in the sense that
\begin{equation}\label{eqKfromH}
K=\{\sigma\in\Rtrsym: (p(\sigma), q(\sigma))\in H\} \text{ for some } H\subseteq\R\times[0,\infty).
\end{equation}
We seek a characterization of $K^\sdqc$ in the $(p,q)$ plane, i.~e., {we aim at characterizing} the set
\begin{equation}\label{eqdefphiksdsc}
\begin{split}
\Phi(K^\sdqc)
{=}&\{(p_*,q_*): \exists \sigma\in K^\sdqc \text{ with } (p(\sigma), q(\sigma))=(p_*,q_*)\},
\end{split}
\end{equation}
and the same for $\Kinfty$. An explicit expression is given in Theorem \ref{theorelaxexplicitpq} below.

In some cases, we shall additionally show that $K^\sdqc$ is fully characterized by the values of $p$ and $q$, in the sense that $\sigma\in K^\sdqc$ if and only if $(p(\sigma),q(\sigma))\in \tilde H$ for some $\tilde H\in \R\times[0,\infty)$, see Theorem \ref{lemmaKfullcharsmallder} below. This is however not always true, see Lemma \ref{lemmanotcylndrical} for an example where this representation fails.

Our results are restricted to the case in which the relevant set $\tilde H$ is connected. Connectedness of hulls is, in general, a very subtle issue related to the locality of the various convexity conditions. In the case of quasiconvexity, it relates to the compactness of sequences taking values in sets without rank-one connections, a question known as Tartar's conjecture \cite{Tartar1982}.
We recall that nonlocality of quasiconvexity was proven, in dimension 3 and above, by Kristensen \cite{Kristensen1999-localityqc} based on \v{S}ver\'{a}k's counterexample to the equivalence of rank-one convexity and quasiconvexity \cite{Sverak1992-rcqc}. However, in dimension two the situation is different and positive results have been obtained by \v{S}ver\'{a}k \cite{Sverak1993-Tartarsconj} and Faraco and Sz\'{e}kelyhidi \cite{FaracoSzekelyhidi2008}.

We begin by explaining the construction qualitatively and then present a proof of its correctness. In order to get started, we fix $p_0\in \R$ and consider the rank-two line
\begin{equation}\label{eqlinep0t}
 t\mapsto \xi_t := \begin{pmatrix}
     p_0+ t & 0 & 0 \\ 0 & p_0 - t & 0\\ 0 & 0 & p_0
     \end{pmatrix}.
\end{equation}
Clearly, $p(\xi_t)=p_0$ and $q(\xi_t)=|t|$. In particular, if $(p_0, q_0)\in H$ then both $\xi_{q_0}$ and $\xi_{-q_0}$ belong to $K$ and, with Lemma \ref{lemmaranktwo}, we obtain $\xi_t\in K$ for all $t\in[-q_0, q_0]$. Based on this argument, we define the set
\begin{equation}\label{eqdefhatH}
\hat H:=\{(p,q)\in\R\times[0,\infty): (p,q+a)\in H\text{ for some }a\ge 0\}.
\end{equation}

{The set
$\Phi(K^\sdqc)$ mentioned in (\ref{eqdefphiksdsc}) will then be characterized in Theorem \ref{theorelaxexplicitpq} as a set $\Hsdqc$ that we now show how to construct explicitly. Specifically, }
 $\Hsdqc$ is {obtained} from $\hat H$ by first taking the convex hull and then eliminating all points that can be separated from $\Hsdqc$ by means of a translation of Tartar's function, $f(\sigma) := 4q^2(\sigma)-3p^2(\sigma)$, which is symmetric $\div$-quasiconvex, see Lemma \ref{lemmatartar2} below. We say that a point $y_*=(p_*,q_*)$ can be separated from $\hat H$ if there is $y_0=(p_0, q_0)\in \R\times[0,\infty)$ such that the function $f_{y_0}(p,q):=4(q^2-q_0^2)-3 (p-p_0)^2$ obeys $\max f_{y_0}(H)<f_{y_0}(y_*)$. Then, the set $\Hsdqc$ is
\begin{equation}\label{eqdefHstar}
\Hsdqc:=\{y_*\in \hat H^\conv: \text{$y_*$ cannot be separated from $\hat H$}\}.
\end{equation}
We refer to Figure \ref{fig-twop-a} for an illustration.

Our main result is the following.
\begin{theorem}\label{theorelaxexplicitpq}
Let $H\subseteq\R\times[0,\infty)$ be a compact set, $K:=\{\sigma\in\Rtrsym: (p(\sigma), q(\sigma))\in H\}$. If the set $\Hsdqc$ defined in (\ref{eqdefhatH}--\ref{eqdefHstar}) is connected, then $\Phi(K^\sdqc)= \Phi(\Kinfty)= \Hsdqc$.
\end{theorem}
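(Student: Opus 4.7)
The plan is to prove the double equality through the chain
$\Hsdqc \subseteq \Phi(\Kinfty) \subseteq \Phi(K^\sdqc) \subseteq \Hsdqc$,
with the middle inclusion being immediate from $\Kinfty \subseteq K^\sdqc = K^{(p)}$ (Lemma \ref{lemmapqsimple}). So the real work splits into a separation (``upper bound'') part and a construction (``lower bound'') part. I would state Tartar's function in the form $f_T(\sigma) = 2|\sigma|^2 - (\Tr\sigma)^2 = 4q^2(\sigma) - 3p^2(\sigma)$ and note that $f_{y_0}\circ\Phi(\sigma) = f_T(\sigma - p_0\Id) - 4q_0^2$ is sdqc with quadratic growth, since sdqc is preserved by translation and adding constants.

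For the inclusion $\Phi(K^\sdqc)\subseteq \Hsdqc$, fix $\sigma\in K^\sdqc$. I would first argue $\Phi(\sigma)\in \hat H^\conv$. The set $\hat H$ is ``downward-closed in $q$'' by construction, so any $y_*\notin \hat H^\conv$ is strictly separated from $\hat H^\conv$ by an affine functional $L(p,q) = \alpha p + \beta q + \gamma$ with $\beta\ge 0$ (the cases $\beta>0$ and $\beta=0$ correspond to separating an ``upper'' point from the concave upper envelope, respectively to separating a point outside the $p$-range). Lifting through $\Phi$, the map $\sigma\mapsto \alpha p(\sigma) + \beta q(\sigma) + \gamma$ is convex on $\Rtrsym$ (because $q$ is a seminorm composed with a linear projection and $\beta\ge0$), hence sdqc; taking its positive part gives a nonnegative continuous sdqc test function and contradicts $\sigma\in K^\sdqc$. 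Then, the assumption that $\Phi(\sigma)$ is not Tartar-separable from $\hat H$ (equivalently from $H$, since $f_{y_0}$ is monotone in $q$ with $\beta>0$) is forced by applying $f_{y_0}\circ\Phi$ as a sdqc test function of quadratic growth. Together, these two steps place $\Phi(\sigma)\in\Hsdqc$.

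For the inclusion $\Hsdqc\subseteq \Phi(\Kinfty)$, I would first verify $\hat H\subseteq \Phi(\Kinfty)$ using the rank-$2$ line (\ref{eqlinep0t}): for $(p_0,q_0)\in H$ both $\xi_{q_0},\xi_{-q_0}\in K$ and $\rank(\xi_{q_0}-\xi_{-q_0})=2<3$, so by Lemma \ref{lemmaranktwo} the whole segment lies in $\Kinfty$, and its image under $\Phi$ covers $\{(p_0,q)\,:\,q\in[0,q_0]\}$. The task is then to propagate this into all of $\Hsdqc$ by further rank-$2$ laminations. The natural building block is a pair $(\sigma_0,\sigma_1)\in \Phi^{-1}(\hat H)^2$ with $\det(\sigma_1-\sigma_0)=0$ and \emph{aligned} deviatoric parts (so that $q$ is affine along $[\sigma_0,\sigma_1]$); I would compute that for such a pair the image of the segment in $(p,q)$-space traces exactly an arc on a level set of some $f_{y_0}$, because the rank-deficiency identity $2|\sigma_1-\sigma_0|^2\ge (\Tr(\sigma_1-\sigma_0))^2$ is the tightness condition for Tartar's inequality. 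Using this, together with the connectedness of $\Hsdqc$, I would cover $\Hsdqc$ by a nested family of such admissible rank-$2$ arcs emanating from $\hat H$ and close the construction by iteration (or a continuous selection argument), producing for each $(p_*,q_*)\in\Hsdqc$ an explicit $\sigma\in\Kinfty$ with $\Phi(\sigma)=(p_*,q_*)$.

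The main obstacle is the second step of the lower bound: producing matrix-valued rank-$2$ laminates whose $(p,q)$-trajectories exactly sweep out $\Hsdqc\setminus\hat H$ without leaving $\Hsdqc$. Matching the geometry of rank-$2$ directions in $\Rtrsym$ to the Tartar-ellipse boundaries in the $(p,q)$-plane is the crux: the tightness of Tartar's inequality on rank-$2$ matrices provides exactly the right compatibility, but one must carefully parametrize the admissible deviatoric alignments to ensure that the lamination stays inside the $p$-\sdqclong\ hull, and connectedness of $\Hsdqc$ is essential so that these local arcs can be concatenated into a globally defined construction.
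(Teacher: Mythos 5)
Your overall decomposition matches the paper exactly: both inclusions of the chain $\Hsdqc \subseteq \Phi(\Kinfty)\subseteq\Phi(K^\sdqc)\subseteq\Hsdqc$, with the middle one trivial, an ``outer bound'' by separation and an ``inner bound'' by rank-$2$ constructions. Your outer bound is essentially the paper's: reduce to separating affine functionals $\alpha p+\beta q+\gamma$ with $\beta\ge0$ (using that $\hat H$ is downward-closed in $q$), lift via $\Phi$ to a convex hence sdqc function, and separately use $f_{y_0}\circ\Phi$ (sdqc by Tartar's lemma) for the non-convex part. That part is sound.

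The genuine gap is in the inner bound, where you yourself flag the crux but then leave it unresolved. Your plan is to concatenate rank-$2$ arcs ``emanating from $\hat H$'' by an iteration or continuous-selection scheme until $\Hsdqc$ is covered. The paper does not iterate; it works pointwise via a closed-set dichotomy on $S^1$. Fix $y_*\in\Hsdqc$ with $q_*>0$. Lemma~\ref{lemmaranktwolines} builds a \emph{continuous} family $\Gamma_{y_*}(e,\cdot)$ of rank-$2$ curves through $y_*$ parametrized by $e\in S^1$, which are level sets either of convex piecewise-affine functions (directions in $S^1_+$) or of Tartar functions $f_{y_0}$ (directions in $S^1_-$). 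Define $D(y_*)\subseteq S^1$ as the set of $e$ for which the forward branch $\Gamma_{y_*}(e,[0,\infty))$ hits $\hat H_A$; by continuity of $\Gamma_{y_*}$ and compactness of $\hat H_A$, $D(y_*)$ is closed, and so is $-D(y_*)$. Either they intersect --- then some curve hits $\hat H_A$ on both sides of $y_*$, and Lemma~\ref{lemmaranktwo} places $y_*$ directly in $\Phi(\Kinfty)$ --- or they are disjoint closed sets, hence (since $S^1$ is connected) cannot cover $S^1$, giving a direction $e$ with both branches missing $\hat H_A$; that curve is a separating level set, contradicting $y_*\in\Hsdqc$. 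This dichotomy is the missing step; without it, an iteration scheme would have to prove that the iterated laminate set is both relatively open and closed in $\Hsdqc$, which you have not argued and which is not at all apparent.

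A second omission concerns the $q=0$ axis. Your ``first step'' places $\hat H$ in $\Phi(\Kinfty)$, but $\Hsdqc\cap\{q=0\}=A\times\{0\}$ with $A=[\pmin,\pmax]$ can strictly contain $\hat H\cap\{q=0\}$, and the $S^1$ dichotomy above must hit the augmented set $\hat H_A:=\hat H\cup A\times\{0\}$, not just $\hat H$. Showing that $p\Id\in\Kinfty$ for every $p\in A$ (the identity $A=B=C$ of Lemma~\ref{lemmaABC}) is a nontrivial separate argument, itself relying on connectedness of $\Hsdqc$ and an explicit four-matrix lamination, and your proposal does not address it.
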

\begin{proof}
The result follows from Lemma \ref{lemmalowerboundexpl} and Lemma \ref{lemmainnerbound1} below, using
the inclusion $\Kinfty\subseteq\Ksdqc$ that was proven in Lemma \ref{lemmapqsimple}.
\end{proof}

With an additional condition on the tangent to the boundary of $\Hsdqc$, we obtain a full characterization of the hull. The necessity of the condition on the tangent is proven in Lemma \ref{lemmanotcylndrical} below.
\begin{theorem}\label{lemmaKfullcharsmallder}
Under the assumptions of Theorem \ref{theorelaxexplicitpq}, if additionally the tangent to $\partial \Hsdqc$ belongs to $\{e\in S^1: |e_2|\le \frac{\sqrt3}{4}|e_1|\}$ for any $y_*\in \partial \Hsdqc\setminus \hat H$, then $K^\sdqc=\Kinfty=\{\sigma:\Phi(\sigma)\in\Hsdqc\}$.
\end{theorem}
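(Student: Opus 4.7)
From Theorem \ref{theorelaxexplicitpq} we already have $\Phi(K^\sdqc)=\Phi(\Kinfty)=\Hsdqc$, so $K^\sdqc\subseteq\{\sigma:\Phi(\sigma)\in\Hsdqc\}$ is automatic; combined with $\Kinfty\subseteq K^\sdqc$ from Lemma \ref{lemmapqsimple}, the theorem reduces to the reverse inclusion $\{\sigma:\Phi(\sigma)\in\Hsdqc\}\subseteq\Kinfty$. Since $K$ is rotationally invariant (because $p$ and $q$ are), Lemma \ref{lemmachangevariables} with $A=Q\in\SO(3)$, $B=0$ shows that $\Kinfty$ inherits this invariance, so it suffices to treat diagonal $\sigma=\diag(\lambda_1,\lambda_2,\lambda_3)$ with $\Phi(\sigma)\in\Hsdqc$.

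Fix such a $\sigma$ and set $(p_*,q_*):=\Phi(\sigma)$, $a_i:=\lambda_i-p_*$. The idea is to realise $\sigma$ as an interior point of a rank-at-most-two segment $[\tau_-,\tau_+]$ with $\tau_\pm\in\Kinfty$ and then invoke Lemma \ref{lemmaranktwo}. A reservoir of admissible endpoints is furnished by the inner-bound construction in Lemma \ref{lemmainnerbound1}: for every $(p,q)\in\Hsdqc$, the \emph{generator} $\diag(p+q,p-q,p)$ and each of its coordinate permutations lies in $\Kinfty$. Along a rank-two step $t\mapsto\sigma+tE$ with $E=\diag(e_1,e_2,e_3)$ having at least one $e_i=0$, the initial tangent of the image in the $(p,q)$-plane is $\dot p=\tfrac{1}{3}\sum_i e_i$ and $2q_*\dot q=\sum_i a_ie_i$; when $E=\diag(1,1,0)$, so $\lambda_3$ is the frozen coordinate, the entire trajectory is in fact the Tartar hyperbola $4q^2-3(p-\lambda_3)^2=4q_*^2-3(\lambda_3-p_*)^2$, and it meets the generator locus at the (up to three) parameter values where one of the deviator eigenvalues crosses zero.

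The geometric claim at the heart of the proof is then: for every diagonal $\sigma$ with $\Phi(\sigma)\in\Hsdqc$, some permutation of $\diag(1,1,0)$ or of $\diag(1,-1,0)$ admits parameters $t_-<0<t_+$ for which both $\sigma+t_\pm E$ are generators whose $(p,q)$-images still lie in $\Hsdqc$. The constant $\sqrt 3/4$ enters precisely here: the slope of a Tartar hyperbola $4q^2-3(p-p_0)^2=\mathrm{const}$ is $3|p-p_0|/(4q)$, and by construction (\ref{eqdefHstar}) the boundary portion $\partial\Hsdqc\setminus\hat H$ consists of arcs of Tartar hyperbolas coming from the separating functions $f_{y_0}$. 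The hypothesis that this boundary is nowhere steeper than $\sqrt 3/4$ ensures that the Tartar-shaped rank-two trajectories through $\sigma$ remain on the $\Hsdqc$-side of $\partial\Hsdqc$ until they reach the generator locus, keeping both $(p,q)$-images of $\tau_\pm$ inside $\Hsdqc$.

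The main obstacle is to execute this construction uniformly across the fibre $\Phi^{-1}(p_*,q_*)$ and across the position of $(p_*,q_*)$ relative to $\partial\Hsdqc$: one must distinguish whether $(p_*,q_*)$ is in the interior of $\Hsdqc$, on $\hat H\cap\partial\Hsdqc$, or on $\partial\Hsdqc\setminus\hat H$, and case-analyse the Lode angle of $\sigma$ (which dictates which coordinate can be taken as the frozen $p_0$). When a single rank-two step does not suffice, the connectedness of $\Hsdqc$---already assumed in Theorem \ref{theorelaxexplicitpq}---permits chaining finitely many such steps, producing an in-approximation of $\sigma$ by elements of $\Kinfty$; the weak-$*$ closedness of $\Kinfty$ from Lemma \ref{lemmaLinftyKclosed} then delivers $\sigma\in\Kinfty$. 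The counterexample in Lemma \ref{lemmanotcylndrical} confirms that the constant $\sqrt 3/4$ is sharp.
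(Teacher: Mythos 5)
Your reduction to the inclusion $\{\sigma:\Phi(\sigma)\in\Hsdqc\}\subseteq\Kinfty$ is right, but the two pillars you build on do not hold. First, the ``reservoir'' claim --- that for every $(p,q)\in\Hsdqc$ the generator $\diag(p+q,p-q,p)$ and its permutations lie in $\Kinfty$ --- is not what Lemma \ref{lemmainnerbound1} gives: that lemma only produces \emph{some} preimage of $(p,q)$ in $\Kinfty$, and the witness constructed there (a point on one of the lines (\ref{ranktwolinesmin}) or (\ref{ranktwolinespl})) is generically not a generator. A full-fibre statement is available only over $\hat H$ (Lemma \ref{lemmahatHD}); over $\Hsdqc\setminus\hat H$ the fibre statement is precisely what this theorem must prove, so citing it is circular. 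Second, your central geometric claim restricts the rank-two step to coordinate permutations of $\diag(1,1,0)$ and $\diag(1,-1,0)$. Through a fixed diagonal $\sigma$ with deviator eigenvalues $d_1,d_2,d_3$ these produce only the vertical line and three hyperbolas of the standard family whose slopes at $(p_*,q_*)$ are the \emph{discrete} values $-3d_i/(4q_*)$; you cannot in general match the tangent of $\partial\Hsdqc$, nor reach $\hat H$ (or generators with image in $\Hsdqc$) on both sides. Concretely, in the two-point example with $p_1\le q_1/\sqrt3$, take $(p_*,q_*)$ on the hyperbolic arc of $\partial\Hsdqc$ near $p_1$ and $\sigma$ with deviator proportional to $\diag(1,1,-2)$: the available slopes are $\{-\tfrac{\sqrt3}{4},-\tfrac{\sqrt3}{4},+\tfrac{\sqrt3}{2}\}$, the slope-$(-\tfrac{\sqrt3}{4})$ hyperbola reaches $\hat H$ only on one side, the degenerate slope-$(+\tfrac{\sqrt3}{2})$ one only on the other, the vertical line misses $\hat H$ altogether, and along $E=\diag(1,1,0)$ all generator crossings occur at a single negative parameter. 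So the claimed two-sided configuration simply does not exist within your family, and the subsequent ``chaining'' via connectedness plus Lemma \ref{lemmaLinftyKclosed} cannot repair this: that lemma is about weak-$*$ closedness of sets of stress fields, not a device for passing matrix limits into $\Kinfty$, and any chain again needs intermediate matrices already known to be in $\Kinfty$.

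The missing idea is the paper's Lemma \ref{lemmaconstructranktwodir}, and with it the true role of the constant $\tfrac{\sqrt3}{4}$. Taking $B=e\otimes e+f\otimes f$ for an \emph{arbitrary} orthonormal pair (not a coordinate pair), the image of $t\mapsto\sigma_*+tB$ is a hyperbola of the standard family whose slope at $(p_*,q_*)$ equals $\tfrac{3}{4q_*}(p_*-g\cdot\sigma_*g)$, $g=e\wedge f$; since the extreme deviator eigenvalues of any fibre element have modulus at least $q_*/\sqrt3$, the attainable slopes fill the whole interval $[-\tfrac{\sqrt3}{4},\tfrac{\sqrt3}{4}]$ --- this, not the shape of the boundary arcs, is where $\tfrac{\sqrt3}{4}$ enters. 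The paper then argues: for $y_*\in\partial\Hsdqc\setminus\hat H$ the construction of Lemma \ref{lemmainnerbound1} yields a curve $\Gamma_{y_*}(e,\cdot)$ hitting $\hat H$ on both sides while staying in $\Hsdqc$, hence tangent to $\partial\Hsdqc$ at $y_*$; the hypothesis gives $|e_2|\le\tfrac{\sqrt3}{4}|e_1|$, so Lemma \ref{lemmaconstructranktwodir} realizes the \emph{same} plane curve as the image of a rank-two line through the arbitrary matrix $\sigma_*$, whose endpoints lie over $\hat H$ and hence in $\Kinfty$ by Lemma \ref{lemmahatHD}; Lemma \ref{lemmaranktwo} gives $\sigma_*\in\Kinfty$ (note the endpoints only need to be in $\Kinfty$ --- their images need not stay in $\Hsdqc$, so no ``remain on the $\Hsdqc$-side'' argument is needed). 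Interior fibres are then handled by the vertical-line argument of Lemma \ref{lemmahatHD} applied to the boundary graph $q=\psi(p)$, together with $A=B$ from Lemma \ref{lemmaABC} for $q_*=0$. Your proposal, by contrast, neither proves its key geometric claim nor supplies a substitute for Lemma \ref{lemmaconstructranktwodir}, so the argument has an essential gap.
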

\begin{proof}
The result follows from Lemma \ref{lemmalowerboundexpl} and Lemma \ref{lemmainnerbound2} below, using
the inclusion $\Kinfty\subseteq\Ksdqc$ that is proven in Lemma \ref{lemmapqsimple}.
\end{proof}
\begin{figure}
 \includegraphics[width=6cm]{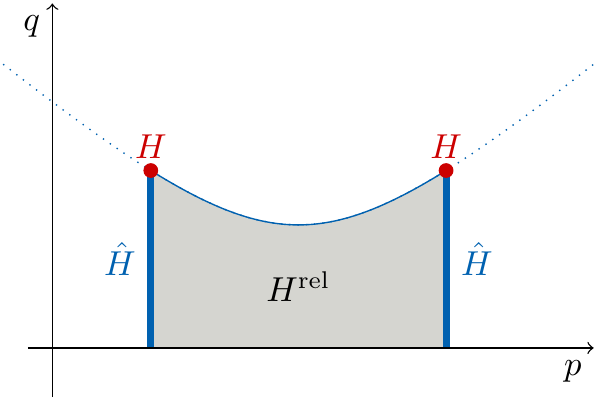}
 \caption{Sketch of the construction of $\Hsdqc$ in the case that $H$ consists of two points. The set $\hat H$ consists of two segments, which join the points in $H$ with their projections on the $\{q=0\}$ axis. The set $\Hsdqc$ consists of the part of the rectangle between these two lines that cannot be separated by  {the function $f_{y_0}$ for any $y_0$. Graphically, this corresponds to delimiting the set by the graph of $f_{y_0}$}. In this case, it suffices to consider a single {function} of the family (dotted).}
 \label{fig-twop-a}
\end{figure}

 \subsection{Outer bound}
The next two Lemmas contain the proof of the outer bound, i.~e., the inclusion $\Phi (K^\sdqc)\subseteq \Hsdqc$.
\begin{lemma}\label{lemmatartar2}
Let $g:\Rtrsym\to\R$ be defined by $g(\xi):=f_{y_0}(p(\xi),q(\xi))$, where $f_{y_0}(p,q):=4(q^2-q_0^2)-3 (p-p_0)^2$ and $y_0=(p_0, q_0)\in \R\times[0,\infty)$. Then, $g$ is symmetric $\div$-quasiconvex.
\end{lemma}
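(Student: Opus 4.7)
The plan is to reduce $g$ to a linear combination of Tartar's function $\fT$ from Lemma \ref{lemmatartar}, a linear function of $\sigma$, and a constant, all of which are manifestly symmetric $\div$-quasiconvex.

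First, I would rewrite $q^2(\sigma)$ purely in terms of $|\sigma|^2$ and $\Tr\sigma$. From the definitions (\ref{eqdefpqsigma}) and the identity $|\sigma-p\Id|^2=|\sigma|^2-2p\,\Tr\sigma+3p^2=|\sigma|^2-3p^2$, one obtains
\begin{equation}
  2q^2(\sigma)=|\sigma|^2-3p^2(\sigma),\qquad p(\sigma)=\tfrac13\Tr\sigma,
\end{equation}
so that
\begin{equation}
  4q^2(\sigma)-3p^2(\sigma)=2|\sigma|^2-9p^2(\sigma)=2|\sigma|^2-(\Tr\sigma)^2=\fT(\sigma),
\end{equation}
where the last equality uses Lemma \ref{lemmatartar} with $n=3$.

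Next, I would expand the quadratic $f_{y_0}$ around $y_0=(p_0,q_0)$:
\begin{equation}
  g(\sigma)=4q^2(\sigma)-4q_0^2-3p^2(\sigma)+6p_0\,p(\sigma)-3p_0^2
  =\fT(\sigma)+2p_0\,\Tr\sigma-(4q_0^2+3p_0^2).
\end{equation}

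Finally, I would invoke the Remark after the definition of symmetric $\div$-quasiconvexity: sums of symmetric $\div$-quasiconvex functions with nonnegative coefficients are symmetric $\div$-quasiconvex, and all convex (in particular affine or constant) functions are symmetric $\div$-quasiconvex. Since $\fT$ is symmetric $\div$-quasiconvex by Lemma \ref{lemmatartar}, and $\sigma\mapsto 2p_0\Tr\sigma-(4q_0^2+3p_0^2)$ is affine, the function $g$ is symmetric $\div$-quasiconvex. There is no substantive obstacle; the only bookkeeping step is the algebraic identity $4q^2-3p^2=\fT$, which makes the claim immediate.
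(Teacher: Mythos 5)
Your proof is correct and takes essentially the same approach as the paper: both reduce to Tartar's function via the identity $4q^2(\sigma)-3p^2(\sigma)=\fT(\sigma)$. The only cosmetic difference is that you expand $g$ as $\fT$ plus an affine function and invoke the stated closure properties, whereas the paper writes $g(\xi)=\fT(\xi-p_0\Id)-4q_0^2$ and implicitly uses that a constant shift of the argument preserves symmetric $\div$-quasiconvexity.
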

\begin{proof}
By Lemma \ref{lemmatartar}, we know that the function $\fT:\Rtrsym\to\R$,
\begin{equation}
 \fT(\xi):=2|\xi|^2-(\Tr\xi)^2
\end{equation}
is symmetric $\div$-quasiconvex. From
\begin{equation}
|\xi|^2=|\xi-p(\xi)\Id|^2+|p(\xi)\Id|^2=2q(\xi)^2 + 3 p(\xi)^2 ,
\end{equation}
we obtain
\begin{equation}
 \fT(\xi)=4q(\xi)^2-3p(\xi)^2.
\end{equation}
Therefore, $g(\xi)=\fT(\xi-p_0\Id)-4q_0^2$ is symmetric $\div$-quasiconvex.
\end{proof}

\begin{lemma}\label{lemmalowerboundexpl}
Under the assumptions of Theorem \ref{theorelaxexplicitpq}, $\Phi (K^\sdqc)\subseteq \Hsdqc$.
\end{lemma}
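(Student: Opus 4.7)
The plan is to verify, for any $\xi \in K^\sdqc$, both conditions defining $\Hsdqc$: that $y_* := \Phi(\xi) \in \hat H^\conv$, and that $y_*$ cannot be separated from $\hat H$ by any translate $f_{y_0}$ of Tartar's function. The central tool, extracted from the minimality proof inside Lemma \ref{smallestsdqc}, is the principle that whenever $g \in C^0(\Rtrsym;[0,\infty))$ is \sdqclong\ with $p$-growth for some $p \in (1,\infty)$ and $\xi \in K^\sdqc = K^{(p)}$, one has $g(\xi) \le \max g(K)$. Both required conditions will follow by producing, for each potential separation, an appropriate such $g$ that vanishes on $K$.

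To rule out separation by a given $y_0 = (p_0,q_0)$, I would take $g(\sigma) := \max\{f_{y_0}(\Phi(\sigma)) - c_0,\,0\}$ with $c_0 := \max f_{y_0}(H)$. By Lemma \ref{lemmatartar2} and the fact that the maximum of two \sdqclong\ functions is \sdqclong, $g$ is \sdqclong; it has quadratic growth and satisfies $\max g(K) = 0$, since $\Phi(K) = H$ (every $(p_*,q_*) \in H$ is realized by the matrix $\xi_0$ displayed earlier in the section). The tool then gives $g(\xi) \le 0$, forcing $f_{y_0}(y_*) \le c_0 = \max f_{y_0}(H)$ and ruling out strict separation.

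For the convex-hull containment, I would first observe that $\hat H$ is downward closed in $q$ (if $(p,q) \in \hat H$ then $(p, q') \in \hat H$ for $q' \in [0,q]$), and hence $\hat H^\conv$ is too, since convex combinations of the bottom projections remain on $\{q = 0\}$. This forces $\hat H^\conv$ to be the region squeezed between $q = 0$ and a concave upper envelope on the $p$-projection of $\hat H$. Consequently, any $y_* \in \R \times [0,\infty) \setminus \hat H^\conv$ admits a strict separating affine functional $\ell(p,q) = \alpha p + \beta q - c$ with $\beta \ge 0$: either $p_*$ lies outside the projection, where $\beta = 0$ suffices, or a supporting line to the concave envelope works. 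For such an $\ell$, the function $\sigma \mapsto \max\{\ell(\Phi(\sigma)),0\}$ is convex in $\sigma$ — $p$ is linear, $q$ is a seminorm on $\Rtrsym$, and $\beta \ge 0$ keeps the combination convex, with the positive part preserving convexity — hence \sdqclong, with linear growth, and zero on $K$. Applying the tool one more time yields $\ell(y_*) \le 0$, contradicting the strict separation.

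The main obstacle I expect is the reduction to separating hyperplanes with $\beta \ge 0$ via the downward-closed structure of $\hat H^\conv$; the remaining steps amount to a direct packaging of Lemma \ref{lemmatartar2} together with the characterization of $K^\sdqc$ by nonnegative \sdqclong\ functions of polynomial growth. Note that the connectedness hypothesis on $\Hsdqc$ assumed in Theorem \ref{theorelaxexplicitpq} plays no role in this direction — it is only needed to construct the inner bound.
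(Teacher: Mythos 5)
Your proof is correct and follows essentially the paper's route: membership in $\hat H^\conv$ is obtained from an affine separating function in $(p,q)$ with nonnegative $q$-coefficient (exploiting that $\hat H$ is downward closed in $q$), non-separability from Lemma \ref{lemmatartar2}, and both are converted into statements about $K^\sdqc$ through the principle, proved inside Lemma \ref{smallestsdqc}, that a \sdqclong\ function of $p$-growth cannot exceed $\max g(K)$ on $K^\sdqc$ -- exactly the mechanism the paper uses, though it leaves it implicit. The one place where the paper is simpler is your ``main obstacle'', the reduction to $\beta\ge 0$: instead of describing $\hat H^\conv$ by a concave upper envelope and invoking a supporting line (which at an endpoint of the projection with vertical tangent works only after moving the touching point, so your phrasing needs a small repair there), the paper takes an arbitrary separating affine function $a(p,q)=bp+cq+d$ and, when $c<0$, simply replaces it by $a'(p,q)=bp+d$, which still satisfies $a'(y_*)\ge a(y_*)>0$ since $q_*\ge 0$ and $a'\le 0$ on $\hat H$ because $(p',q')\in\hat H$ implies $(p',0)\in\hat H$.
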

\begin{proof}
We pick a $\sigma\in K^\sdqc$ and define $y:=(p(\sigma),q(\sigma))$. We need to show that $y\in \Hsdqc$.

If $y\not\in \hat H^\conv$, then there is an affine function $a:\R^2\to\R$ of the form $(p,q)\mapsto a(p,q)=bp+cq+d$ such that $a(y)>0$ and $a\le 0$ on $\hat H$.

We first show that we can assume $c\ge 0$. Indeed, if this were not the case, we could consider the new affine function $a'(p,q):=bp+d$, which obeys $a'(y)\ge a(y)>0$. Let now $(p',q')\in \hat H$. By the definition of $\hat H$ we have $(p',0)\in\hat H$. By the definition of $a'$ and the properties of $a$ we obtain $a'(p',q')=a(p',0)\le 0$. Therefore, we can assume $c\ge 0$, or, equivalently, that $a$ is nondecreasing in its second argument.

The function $g:\Rtrsym\to\R$, $g(\xi):=a(p(\xi),q(\xi))$ is the composition of convex functions, with $p$ linear, and $a$ nondecreasing in the second argument. Therefore, $g$ is convex, as can be easily verified,
\begin{align*}
g(\lambda \xi_1+(1-\lambda) \xi_2)
=& a(p(\lambda \xi_1+(1-\lambda) \xi_2), q(\lambda \xi_1+(1-\lambda) \xi_2))\\
\le& a(\lambda p(\xi_1)+(1-\lambda) p(\xi_2), \lambda q(\xi_1)+(1-\lambda) q(\xi_2))\\
=& \lambda g(\xi_1)+(1-\lambda) g(\xi_2).
\end{align*}
In particular, $g\le 0$ on $K$, $g(\sigma)>0$ and $g$ is convex. Hence, $\sigma$ does not belong to the convex hull of $K$ and neither does it belong to the symmetric $\div$-quasiconvex hull.

Assume now that $y\in \hat H^\conv\setminus \Hsdqc$. Then, it is separated from $\hat H$ in the sense of (\ref{eqdefHstar}). Let $y_0=(p_0,q_0)$ be as in the definition of separation. By Lemma \ref{lemmatartar2} the function $\xi\mapsto f_{y_0}(p(\xi), q(\xi))=4(q(\xi)-q_0)^2-3(p(\xi)-p_0)^2$ is symmetric $\div$-quasiconvex and this implies $\sigma\not \in K^\sdqc$. Therefore, $\Phi (K^\sdqc)\subseteq \Hsdqc$.
\end{proof}

\subsection{Inner bound}
We now prove the inner bound. Specifically, we first show that for any $y_*\in \Hsdqc$ there is a matrix $\sigma\in \Kinfty$ with $\Phi(\sigma)=y_*$ (Lemma \ref{lemmainnerbound1}) and then that, if an additional condition on the slope of the boundary of $\Hsdqc$ is fulfilled, any matrix $\sigma$ with $\Phi(\sigma)=y_*$ belongs to $\Kinfty$ (Lemma \ref{lemmainnerbound2}).

Our key result is a characterization of a family of rank-two curves in the $(p,q)$ plane. We say that $t\mapsto \gamma(t)$ is a rank-two curve if it is a reparametrization of $s\mapsto \Phi(A+s(B-A))$ for some $A$, $B\in\Rtrsym$ with $\rank(A-B)\le 2$. The curves we construct are at the same time level sets of \sdqclong\ functions, either of the type used to separate points in the definition of $\Hsdqc$ or (piecewise) affine. This allows (see proof of Lemma \ref{lemmainnerbound1} below) to show that any point in $\hat H^\conv$ that cannot be separated from $\hat H$ can be constructed. This strategy is illustrated in Figure \ref{fig-s1}.

\begin{figure}[t]
 \includegraphics[width=10cm]{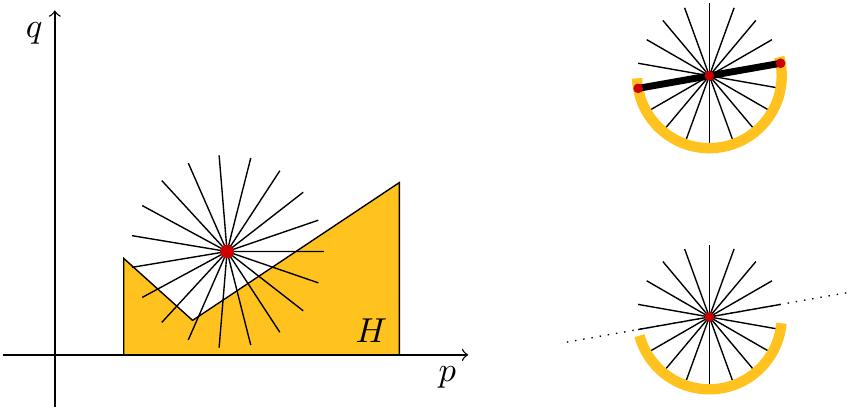} \caption{Strategy for the proof of the inner bound. From every point $y$, we construct a one-parameter family of rank-two lines that start in all possible directions (left panel) and which are at the same time level sets of \sdqclong\ functions. Then, we distinguish two cases: if there is a direction such that the rank-two line intersects the set $H$ on both sides of $y$, then $y$ belongs to the hull. If there is a direction such that the rank-two line does not intersect $H$ on any side of $y$, then we can separate $y$ from $H$. By continuity of the family of curves and compactness of $H$, one of the two must occur.} \label{fig-s1}
\end{figure}

\begin{lemma}\label{lemmahatHD}
Let $K$, $H$ and $\hat H$ be as above. Then, any $\sigma_*\in\Rtrsym$ with $(p(\sigma_*),q(\sigma_*))\in \hat H$ belongs to $\Kinfty$.
\end{lemma}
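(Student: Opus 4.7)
The plan is to reduce the problem to a rank-two construction in the deviatoric subspace and invoke Lemma \ref{lemmaranktwo}. Let $\sigma_*$ be as in the statement and set $(p_*,q_*):=\Phi(\sigma_*)$. By the definition of $\hat H$, there exists $a\ge 0$ such that $(p_*,q_0)\in H$ with $q_0:=q_*+a$. If $a=0$ then $\sigma_*\in K\subseteq\Kinfty$ and there is nothing to prove, so I assume $q_*<q_0$.

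Write $\sigma_*=p_*\Id+\tau_*$, where $\tau_*$ is the deviatoric part, so that $\tau_*$ is symmetric, traceless, and $|\tau_*|^2=2q_*^2$. The key construction is the following: diagonalize $\tau_*$ in an orthonormal basis $\{e_1,e_2,e_3\}$ of $\R^3$ and set
\begin{equation*}
D:=e_1\otimes e_2+e_2\otimes e_1.
\end{equation*}
Then $D$ is symmetric, traceless, satisfies $|D|^2=2$, has rank two, and is orthogonal to $\tau_*$ in the Frobenius inner product (because $\tau_*$ is diagonal in the chosen basis, so its entries $(\tau_*)_{12}$ and $(\tau_*)_{21}$ vanish).

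With $c:=\sqrt{q_0^2-q_*^2}$, define $A:=\sigma_*+cD$ and $B:=\sigma_*-cD$. Since $D$ is traceless, $p(A)=p(B)=p_*$; since $D\cdot\tau_*=0$ and $|D|^2=2$, a direct expansion gives
\begin{equation*}
|A-p_*\Id|^2=|\tau_*|^2+c^2|D|^2=2q_*^2+2(q_0^2-q_*^2)=2q_0^2,
\end{equation*}
so $q(A)=q_0$, and similarly $q(B)=q_0$. Hence $(p(A),q(A))=(p(B),q(B))=(p_*,q_0)\in H$, which means $A,B\in K\subseteq\Kinfty$. Finally, $A-B=2cD$ has rank two, so $\rank(A-B)<n=3$, and Lemma \ref{lemmaranktwo} applied to $\lambda=1/2$ gives $\sigma_*=\tfrac12(A+B)\in\Kinfty$.

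There is no serious obstacle: the only point that requires some care is finding a rank-two, trace-free symmetric matrix orthogonal to $\tau_*$, and this is exactly what the off-diagonal choice $e_1\otimes e_2+e_2\otimes e_1$ in an eigenbasis of $\tau_*$ provides. Note that the argument uses $n=3$ essentially, since we need the rank of $A-B$ to be strictly less than $n$ to apply Lemma \ref{lemmaranktwo}.
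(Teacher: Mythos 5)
Your proof is correct and essentially the same as the paper's: both perturb $\sigma_*$ along a trace-free, rank-two direction, which keeps $p$ fixed, until $q$ reaches the value $q_*+a$ on both sides of $\sigma_*$, and then conclude with Lemma \ref{lemmaranktwo} (in its $\Kinfty$ form, using $K\subseteq\Kinfty$). The only difference is cosmetic: you pick the direction $D=e_1\otimes e_2+e_2\otimes e_1$ orthogonal to the deviator in its eigenbasis, so the endpoints $\sigma_*\pm cD$ are explicit and symmetric, whereas the paper adds the fixed matrix $\mathrm{diag}(t,-t,0)$ and finds $t_-<0<t_+$ with $q(\xi_{t_\pm})=q_*+a$ by continuity and the intermediate value theorem.
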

\begin{proof}
Let $\sigma_*\in \Rtrsym$ be such that $p_*:=p(\sigma_*)$, $q_*:=q(\sigma_*)$ obey $(p_*,q_*+a)\in H$ for some $a>0$. We consider the rank-two line
\begin{equation}
 t\mapsto \xi_t := \sigma_* + \begin{pmatrix}
         t&0&0\\0&-t&0\\0&0&0
     \end{pmatrix}.
\end{equation}
This obeys $\xi_0=\sigma_*$ and $p(\xi_t)=p_*$ for all $t$. The map $t\mapsto q(\xi_t)$ is continuous, equals $q_*$ at $t=0$ and diverges for $t\mapsto\pm\infty$. Hence, there are $t_-<0<t_+$ such that $q(\xi_{t_\pm})=q_*+a$. In particular, $\xi_{t_\pm}\in K$ and, therefore, (Lemma \ref{lemmaranktwo}) $\sigma_*=\xi_0\in \Kinfty$.

\def\no{
For the second one, let $\sigma_*=p_*\Id$. By assumption, there are $(p_-,q_-), (p_+,q_+)\in H$ such that $p_-\le p_*\le p_+$, $q_-\ge (p_*-p_-)/\gamma$, $q_+\ge (p_+-p_*)/\gamma$, where $\gamma=\frac{\sqrt3}2$. In particular, $(p_-, (p_*-p_-)/\gamma)\in \hat H$ and $(p_+, (p_+-p_*)/\gamma)\in \hat H$. We consider the rank-two line
\begin{equation}
 t\mapsto \xi_t = \begin{pmatrix}
     p_*+ t & 0 & 0 \\ 0 & p_* + t & 0\\ 0 & 0 & p_*
     \end{pmatrix}
\end{equation}
and observe that there are $t_-\le 0\le t_+$ such that $p(\xi_{t_\pm})=p_\pm$, $q(\xi_{t_\pm}) = |p_\pm-p_*|/\gamma$. From $\xi_{t_\pm}\in K^\sdqc$, it then follows that $\sigma_*=\xi_0\in K^\sdqc$.

For the third, it suffices to observe that if $(p_0,q_0)\in \hat H$ then there is $q_1$ such that $(p_0,q_1)\in H$. Therefore, $p_0\in D_-$ and $p_0\in D_+$ and $p_0\in D$.}
\end{proof}

\begin{lemma}\label{lemmaranktwolines}
 Let $y=(p_*,q_*)\in \R\times(0,\infty)$. Then, there is a continuous function $\Gamma_y:S^1\times\R\to \R\times[0,\infty)$
 such that for any $e\in S^1$ the map $t\mapsto\Gamma_y(e,t)$ is a rank-two curve parametrized by arc-length, with $\Gamma_y(e,0)=y$, $\partial_t \Gamma_y(e,0)=e$, and $\Gamma_y(e,t)=\Gamma_y(-e,-t)$. The curves $\Gamma_y(e,\cdot)$ are either of the form
 (\ref{ranktwolinesmin}) or of the form (\ref{ranktwolinespl}).
\end{lemma}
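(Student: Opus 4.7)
My plan is to construct $\Gamma_y(e,\cdot)$ explicitly from the family of rank-two perturbations of a fixed preimage $\sigma_*\in\Rtrsym$ of $y$. For any $\eta\in\Rtrsym$ with $\det\eta=0$, the identity $(\sigma_*+s\eta)-p(\sigma_*+s\eta)\Id=(\sigma_*-p_*\Id)+s(\eta-p(\eta)\Id)$ gives directly
\begin{align*}
 p(\sigma_*+s\eta) &= p_* + s\,p(\eta),\\
 q(\sigma_*+s\eta)^2 &= q_*^2 + s\,(\sigma_*-p_*\Id)\cdot(\eta-p(\eta)\Id) + s^2 q(\eta)^2.
\end{align*}
Thus the image in the $(p,q)$-plane is either a vertical segment (when $p(\eta)=0$) or a translate of a Tartar level set $q^2-C(p-p_0)^2=\mathrm{const}$, with $C=q(\eta)^2/p(\eta)^2\ge 3/4$; the lower bound follows from a short computation with the eigenvalues $(\mu_1,\mu_2,0)$ of $\eta$. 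These two families match the forms (\ref{ranktwolinesmin}) and (\ref{ranktwolinespl}).

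To cover every unit tangent $e\in S^1$ at $y$ I exploit that $q_*>0$ forces $\sigma_*-p_*\Id\ne 0$; I diagonalize this matrix and select a $2\times 2$ block with distinct eigenvalues $a_1\ne a_2$. Taking $\eta=Q\,\diag(\mu_1,\mu_2,0)\,Q^T$ with $Q$ the eigenbasis, and using $\Tr(\sigma_*-p_*\Id)=0$, the tangent at $s=0$ reduces to
\begin{equation*}
 \partial_s \Phi(\sigma_*+s\eta)\big|_{s=0} = \Bigl(\tfrac{\mu_1+\mu_2}{3},\;\tfrac{a_1\mu_1+a_2\mu_2}{2q_*}\Bigr).
\end{equation*}
As $(\mu_1,\mu_2)$ ranges over $\R^2\setminus\{0\}$ this sweeps every ray in $\R^2$: non-vertical directions correspond uniquely, up to positive scalar, to $(\mu_1,\mu_2)$ with $\mu_1+\mu_2\ne 0$, while $e=(0,\pm 1)$ corresponds to $\mu_2=-\mu_1\ne 0$. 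Rescaling $(\mu_1,\mu_2)$ so that $|\partial_s\Phi|_{s=0}=1$ yields arc length, and the overall sign is fixed so that $\partial_t\Gamma_y(e,0)=e$ rather than $-e$.

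Continuity of $\Gamma_y(e,\cdot)$ in $e$ away from the vertical directions is immediate from the smooth dependence of $(\mu_1,\mu_2)$ on $e$, and the symmetry $\Gamma_y(e,t)=\Gamma_y(-e,-t)$ is automatic, as both parametrizations correspond to the same underlying rank-two line traversed with $s$ of opposite sign. The hardest part I expect is continuity across the vertical directions $e=(0,\pm 1)$: as $\mu_1+\mu_2\to 0$ the Tartar-hyperbola curves (with $C\to\infty$) must, after arc-length reparametrization, converge uniformly on compact $t$-intervals to the vertical-segment curve obtained in the limit, with matching orientation. This reduces to an elementary limit calculation in the explicit formulas above, but demands careful bookkeeping of signs to ensure $\partial_t\Gamma_y(e,0)=e$ across the transition.
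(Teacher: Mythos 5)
Your computation of the image of a rank-two line through a fixed preimage $\sigma_*$ is correct, but the conclusion you draw from it is not: these curves are in general \emph{not} of the forms (\ref{ranktwolinesmin}) or (\ref{ranktwolinespl}), and that final clause is part of the statement (and is exactly what the later proof of Lemma \ref{lemmainnerbound1} uses, since the curves must be level sets of the separating objects that define $\Hsdqc$: cones $q=\alpha|p-p_0|$ with $\alpha\ge\sqrt3/2$, or zero level sets of the Tartar functions $f_{y_0}$, i.e.\ hyperbolas $q^2=q_0^2+\tfrac34(p-p_0)^2$ with coefficient exactly $\tfrac34$). In your construction the image is $q^2=q_*^2+ \sigma_D\cdot\eta_D\,s + q(\eta)^2 s^2$ with $p=p_*+p(\eta)s$, i.e.\ a curve $q^2 = C(p-\tilde p_0)^2+c$ with $C=q(\eta)^2/p(\eta)^2$; generically $C>3/4$ and $c\neq 0$, so the curve is neither a cone with vertex on the $q=0$ axis (form (\ref{ranktwolinesmin})) nor a $C=3/4$ hyperbola (form (\ref{ranktwolinespl})). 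Moreover this cannot be fixed while keeping the preimage fixed: $C=3/4$ forces $\mu_1=\mu_2$ (your own equality case), and then the linear coefficient is dictated by the eigenvalues of $\sigma_*-p_*\Id$; since $\lambda_1^2+\lambda_2^2+\lambda_3^2=2q_*^2$, the achievable tangent slopes are confined to $|e_2|\le\tfrac{\sqrt3}{4}|e_1|$ for \emph{every} choice of $\sigma_*$ — this is precisely the content of Lemma \ref{lemmaconstructranktwodir} and the reason Theorem \ref{lemmaKfullcharsmallder} carries that extra tangent condition. The paper's proof circumvents the obstruction by \emph{not} fixing a preimage: for each direction $e$ it selects a different diagonal rank-two line ((\ref{ranktwolinesmin}) for $e\in S^1_+$, (\ref{ranktwolinespl}) for $e\in S^1_-$) whose $(p,q)$-image passes through $y$ with tangent $e$; only the image curve, not the matrix line, is anchored at $y$.

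Two further points. Rescaling $(\mu_1,\mu_2)$ so that $|\partial_s\Phi|_{s=0}=1$ does not produce an arc-length parametrization (the speed of $s\mapsto\Phi(\sigma_*+s\eta)$ is not constant), so you would still have to reparametrize, and the joint continuity of $\Gamma_y$ in $(e,t)$ across the vertical directions — which you yourself identify as the hardest step — is asserted rather than proved. In the paper this issue does not arise: on $S^1_+$ the curves are the reflected straight lines $\Pi(y+et)$, given by a single manifestly continuous formula, on $S^1_-$ the parameters $(p_0,q_0,t_*)$ are explicit continuous functions of $e$, and agreement on the four points of $S^1_+\cap S^1_-$ is checked directly. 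As it stands, your argument establishes that every tangent direction at $y$ is realized by some rank-two curve, but not the normal-form statement of the lemma, and the missing normal form is essential for the subsequent separation argument.
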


\begin{figure}[t]
 \includegraphics[width=6cm]{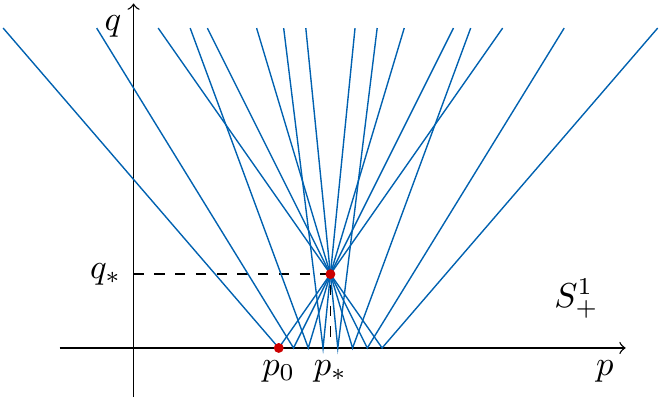}\hskip5mm
 \includegraphics[width=6cm]{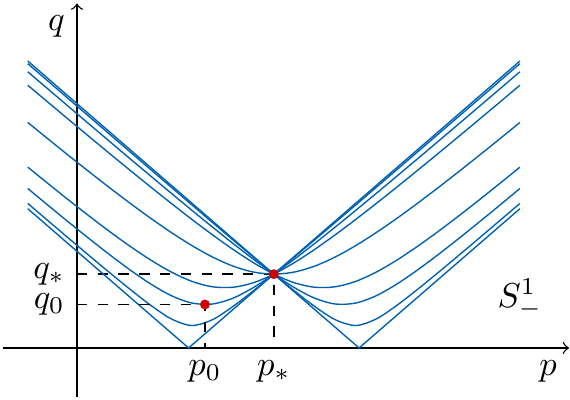}
 \caption{Sketch of the lines constructed in the proof of Lemma \ref{lemmaranktwolines}. Left panel: directions in $S^1_+$, lines defined in (\ref{ranktwolinesmin}). The point $y=(p_*,q_*)$ and one choice of $(p_0,0)$ are marked. Right panel: directions in $S^1_-$, lines defined in (\ref{ranktwolinespl}). The point $y=(p_*,q_*)$ and one choice of $(p_0,0)$ are marked.}
 \label{fig:figlemmaranktwo}
\end{figure}

\begin{proof}
For reasons that will become clear subsequently, we treat separately the two sets
\begin{equation}
 S^1_+:=\{e\in S^1: |e_2|\ge \frac{\sqrt 3}2|e_1|\}
 \hskip3mm\text{ and }\hskip3mm
 S^1_-:=\{e\in S^1: |e_2|\le \frac{\sqrt 3}2|e_1|\}.
\end{equation}
We observe that both are closed, that their union is $S^1$ and their intersection consists of the four points $(\pm \frac2{\sqrt7},\pm \frac{\sqrt3}{\sqrt7})$.

We start from $S^1_+$. For $p_0,a\in\R$, we consider the rank-two line
\begin{equation}\label{ranktwolinesmin}
 t\mapsto \xi_t := \begin{pmatrix}
     p_0+(1+ a) t & 0 & 0 \\ 0 & p_0 + (1-a) t & 0\\ 0 & 0 & p_0
     \end{pmatrix}
\end{equation}
(see Figure \ref{fig:figlemmaranktwo}, left panel). We compute
\begin{equation}
 p(\xi_t)=p_0 + \frac23 t
\hskip3mm\text{ and }\hskip3mm
q^2(\xi_t)= (\frac13+a^2)t^2.
\end{equation}
Solving for $t$ the first equation and inserting into the second, we obtain that the graph of $t\mapsto (p(\xi_t), q(\xi_t))$ is the set
\begin{equation}
q^2= \frac34 (1+3a^2)(p-p_0)^2 ,
\end{equation}
which we can rewrite (recalling that $q\ge 0$) as
\begin{equation}
 q= \frac{\sqrt3}2 \sqrt{1+3a^2} |p-p_0|.
\end{equation}
Therefore, any line of the form $\{q=\alpha|p-p_0|\}$ with $|\alpha|\ge \sqrt{3}/2$ is a rank-two line of the type given in (\ref{ranktwolinesmin}). In turn, this means that we can define
\begin{equation}
 \Gamma_y(e,t):= \Pi (y+et) \hskip1cm \text{ for } e\in S^1_+
\end{equation}
where $\Pi(p,q):=(p,|q|)$ denotes reflection onto the upper half-plane.

We now turn to $S^1_-$. Let $(p_0,q_0)\in\R\times[0,\infty)$ and consider the rank-two line
\begin{equation}\label{ranktwolinespl}
 t\mapsto\xi_t := \begin{pmatrix}
     p_0+q_0+ t & 0 & 0 \\ 0 & p_0 -q_0+ t & 0\\ 0 & 0 & p_0
     \end{pmatrix}.
\end{equation}
As above, a simple computation shows that
\begin{equation}\label{eqhyperbsimpl}
 p(\xi_t)=p_0 + \frac23 t
 \hskip3mm\text{ and }\hskip3mm
q^2(\xi_t) = q_0^2 + \frac 13 t^2.
\end{equation}
We now consider the equation $(p(\xi_{t_*}), q(\xi_{t_*}))=(p_*, q_*)$. For every $t_*\in [-\sqrt3q_*, \sqrt3q_*]$ there is a unique solution $(p_0, q_0)\in\R\times[0,\infty)$, namely,
\begin{equation}
 p_0=p_*-\frac23 t_*
 \hskip3mm\text{ and }\hskip3mm
q_0=\sqrt{ q_*^2-\frac13 t_*^2}.
 \end{equation}
 We compute
\begin{equation}
\left.\frac{d}{dt} \begin{pmatrix} p(\xi_t)\\q(\xi_t) \end{pmatrix}\right|_{t=t_*}=\begin{pmatrix}
         2/3 \\ t_*/3 q_*
        \end{pmatrix}=
        \frac{1}{3q_*}
        \begin{pmatrix}
         2q_* \\ t_*
        \end{pmatrix}.
\end{equation}
Since we can choose $t_*$ freely in $[-\sqrt3q_*, \sqrt3q_*]$, we conclude that for every $e\in S^1_-$ there is a unique triplet $(p_0,q_0,t_*)$ such that the curve $t\mapsto (p(\xi_t),q(\xi_t))$ passes through $y=(p_*,q_*)$ at $t=t_*$ with tangent parallel to $e$. Indeed, this solution can be explicitly written as
\begin{equation}
 t_*=2q_* \frac{e_2}{e_1} \,,\hskip1cm q_0=\sqrt{q_*^2-\frac13 t_*^2}\,,\hskip1cm p_0=p_*-\frac23 t_*.
\end{equation}
It is clear that this solution and, hence, $\xi_t$, depends continuously on $e$. We finally define $\Gamma_y(e,t)$ for $e\in S^1_-$ as the arc-length reparametrization of $t\mapsto \xi_{t_*+t}$ or $t\mapsto \xi_{t_*-t}$ depending on the sign of $e_1$ (see Figure \ref{fig:figlemmaranktwo}, right panel).

It remains to check that this definition agrees with the previous one for the four points in $S^1_-\cap S^1_+$. For these points, the formulas above give $q_0=0$ and $a=0$, so that the two definitions of $\xi_t$ also coincide (with the same $p_0$). This concludes the proof.
\end{proof}

\begin{lemma} \label{lemmaranktwodirect}
Let $y_*=(p_*,q_*)$ with $q_*>0$, and assume that there are $e\in S^1$ and $t_-<0<t_+$ such that $\Gamma_{y_*}(e,t_\pm)\in \hat H$, where $\Gamma_{y_*}$ is the map constructed in Lemma \ref{lemmaranktwolines}. Then, $y_*\in \Phi( \Kinfty)$. If, additionally, $|e_2|\le \frac{\sqrt3 }{4} e_1$ then any matrix $\sigma_*\in \Rtrsym$ with $\Phi(\sigma_*)=y_*$ belongs to $\Kinfty$.
\end{lemma}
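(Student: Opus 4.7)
The plan is to find, for each target matrix $\sigma_*$, a rank-$\le 2$ symmetric direction $\eta$ and scalars $s_-<0<s_+$ such that $\sigma_*+s_\pm\eta$ both lie in $\hat K:=\{\sigma\in\Rtrsym:\Phi(\sigma)\in\hat H\}$. Lemma \ref{lemmahatHD} then places these endpoints in $\Kinfty$, and since they differ by a multiple of $\eta$, which has rank $\le 2<3$, Lemma \ref{lemmaranktwo} puts the entire segment into $\Kinfty$, in particular $\sigma_*$ itself.

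For the first assertion I just use the matrix-space line built into the proof of Lemma \ref{lemmaranktwolines}, either (\ref{ranktwolinesmin}) or (\ref{ranktwolinespl}): by construction, its $\Phi$-image is $\Gamma_{y_*}(e,\cdot)$, the coordinate $p$ is an affine function of the matrix parameter (with slope $2/3$), and $p$ is monotone along $\Gamma_{y_*}(e,\cdot)$ in both the parabolic and the V-shape case (since crossing the vertex of the V preserves the direction in which $p$ changes). Therefore the matrix parameters associated with the arclength values $t_-<0<t_+$ inherit this ordering, and the matrix projecting to $y_*$ sits on the rank-$\le 2$ segment between the two matrices projecting to $\Gamma_{y_*}(e,t_\pm)\in\hat H$. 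The setup above produces a matrix in $\Kinfty$ with $\Phi$-value $y_*$.

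For the second assertion an arbitrary $\sigma_*$ need not lie on the specific line of Lemma \ref{lemmaranktwolines}, so I replace $\eta$ by $\eta:=\Id-nn^T$ for a unit vector $n\in S^2$ to be chosen. A direct calculation using $\tilde\eta=\tfrac13\Id-nn^T$, $\tr\eta=2$, $|\tilde\eta|^2=\tfrac23$ and $\tilde\sigma_*\cdot\tilde\eta=-n^T\tilde\sigma_*n$ (with $\tilde\sigma_*:=\sigma_*-p_*\Id$) gives
\begin{equation*}
p(\sigma_*+s\eta)=p_*+\tfrac{2s}{3},\qquad q^2(\sigma_*+s\eta)=q_*^2-sc+\tfrac{s^2}{3},\qquad c:=n^T\tilde\sigma_*n.
\end{equation*}
Reparameterizing by $p$ via $s=\tfrac{3}{2}(p-p_*)$, this is a parabola in $(p,q^2)$ with leading coefficient $\tfrac34$, which is exactly that of the parabola $\Gamma_{y_*}(e,\cdot)$ in the $S^1_-$ case; the two parabolas coincide iff $c=-\tfrac{4q_*}{3}\,\tfrac{e_2}{e_1}$, a specific target whose magnitude satisfies $|c|\le q_*/\sqrt 3$ under the hypothesis $|e_2|\le\tfrac{\sqrt3}{4}e_1$.

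It remains to exhibit $n\in S^2$ realizing this target. The map $n\mapsto n^T\tilde\sigma_*n$ is continuous on $S^2$ with extrema $\mu_{\min}^*,\mu_{\max}^*$ (the extreme eigenvalues of $\tilde\sigma_*$), so its image is the interval $[\mu_{\min}^*,\mu_{\max}^*]$. An elementary argument based on $\sum_i\mu_i^*=0$ and $\sum_i(\mu_i^*)^2=2q_*^2$ (the extremal configuration is $\mu_1^*=\mu_2^*=\pm q_*/\sqrt 3$, $\mu_3^*=\mp 2q_*/\sqrt 3$) yields $\mu_{\max}^*\ge q_*/\sqrt 3$ and $\mu_{\min}^*\le -q_*/\sqrt 3$ for every admissible $\sigma_*$, so the target $c$ is attained. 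With this $n$ fixed, the endpoint matrices $\sigma_*+s_\pm\eta$ have $\Phi$-images $\Gamma_{y_*}(e,t_\pm)\in\hat H$, and the monotonicity of $p$ in both parameters forces $s_-<0<s_+$, closing the argument via the opening paragraph. The technical heart is the sharp eigenvalue estimate $\mu_{\max}^*\ge q_*/\sqrt 3$, which together with the identity $|c|=\tfrac{4q_*}{3}|e_2/e_1|$ is exactly what the numerical constant $\tfrac{\sqrt3}{4}$ in the hypothesis is calibrated to accommodate.
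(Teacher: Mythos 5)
Your proof is correct and follows essentially the same strategy as the paper's: reduce both assertions to exhibiting a rank-$\le2$ line in $\Rtrsym$ through a matrix representing $y_*$ whose $\Phi$-image is $\Gamma_{y_*}(e,\cdot)$, with endpoints in $\hat H$, and then invoke Lemmas \ref{lemmahatHD} and \ref{lemmaranktwo}. For the second part you go directly to the ansatz $\eta=\Id-nn^T$ whereas the paper's Lemma \ref{lemmaconstructranktwodir} starts from a general rank-two $B=ae\otimes e+bf\otimes f$ and shows $a=b$ is forced by the leading coefficient; the subsequent reduction to the eigenvalue bound $\lambda_{\max}(\tilde\sigma_*)\ge q_*/\sqrt3$, $\lambda_{\min}(\tilde\sigma_*)\le -q_*/\sqrt3$ and the realisation of the target slope by choice of $n$ (the paper's $g$) is the same computation.
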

\begin{proof}
In order to prove the first assertion we observe {that, by} Lemma \ref{lemmaranktwolines}, there is a rank-two line $t\mapsto \xi_t$ such that $\Phi(\xi_0)=y_*$ and $\Gamma_{y_*}(e,\R)$ is the graph of $t\mapsto \Phi(\xi_t)$. In particular, there is $s_-<0$ such that $(p,q)(\xi_{s_-})=\Gamma_{y_*}(e,t_-)\in \hat H$, which by Lemma \ref{lemmahatHD} implies that $\xi_{s_-}\in \Kinfty$. Analogously for $s_+$. By Lemma \ref{lemmaranktwo}, we obtain $\xi_0\in \Kinfty$ and, therefore, $y_*=\Phi(\xi_0)\in\Phi (\Kinfty)$.

We now turn to the second assertion. By Lemma \ref{lemmaconstructranktwodir} below, there is a rank-two line $t\mapsto\xi_t$ with the same properties and, additionally, with $\xi_0=\sigma_*$. The same argument then implies $\sigma_*\in \Kinfty$.
\end{proof}

\begin{lemma}\label{lemmaconstructranktwodir}
Let $\sigma_*\in\R^{3\times 3}_\sym$. Let $e\in S^1$ be such that $|e_2|\le \frac{\sqrt3}4 |e_1|$. Then, there is a rank-two line $t\mapsto\xi_t$ through $\xi_0=\sigma_*$ such that the curve $t\mapsto (p(\xi_t),q(\xi_t))$ is an hyperbola of the type (\ref{eqhyperbsimpl}) which is parallel to $e$ at $t=0$.
\end{lemma}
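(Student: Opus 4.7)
The plan is to exhibit an explicit ansatz for the rank-two line, reducing the lemma to finding a unit vector on which a Rayleigh quotient takes a prescribed value; the existence of such a vector then follows from a short spectral estimate.

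First, I would take $B := \Id - v\otimes v$ for a unit vector $v\in\R^3$ to be chosen, and set $\xi_t := \sigma_* + tB$. Then $B$ has eigenvalues $0,1,1$, so $\rank B=2$ and $\xi_t$ is genuinely a rank-two line; moreover $p(B)=2/3$ and the deviator of $B$, namely $\tfrac{1}{3}\Id-v\otimes v$, has squared norm $2/3$. This particular form of $B$ is essentially forced: demanding that $t\mapsto\Phi(\xi_t)$ be a hyperbola of type (\ref{eqhyperbsimpl}) together with $\rank B\le 2$ pins down the spectrum of $B$ to $(1,1,0)$ up to the sign and scaling of the parameter.

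Let $\tau:=\sigma_*-p_*\Id$ denote the deviator of $\sigma_*$ (so $|\tau|^2=2q_*^2$ and $\Tr\tau=0$) and set $\gamma:=-v\cdot\tau v$. A direct expansion yields
\[
p(\xi_t)=p_*+\tfrac{2t}{3}, \qquad q^2(\xi_t)=q_*^2+\gamma t+\tfrac{t^2}{3}.
\]
Substituting $t'=t+\tfrac{3\gamma}{2}$ rewrites this as $p=p_0+\tfrac{2t'}{3}$, $q^2=q_0^2+\tfrac{(t')^2}{3}$ with $p_0:=p_*-\gamma$ and $q_0^2:=q_*^2-\tfrac{3\gamma^2}{4}$, which matches (\ref{eqhyperbsimpl}) provided $q_0^2\ge 0$. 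The tangent at $t=0$ in the $(p,q)$-plane is $(\tfrac{2}{3},\tfrac{\gamma}{2q_*})$, so parallelism with $e=(e_1,e_2)$ (noting that $e_1\ne 0$ is forced by the hypothesis) requires $\gamma=\tfrac{4q_*e_2}{3e_1}$. The hypothesis $|e_2|\le\tfrac{\sqrt 3}{4}|e_1|$ then bounds $|\gamma|\le q_*/\sqrt 3$, which automatically secures $q_0^2\ge q_*^2/4>0$.

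It remains to produce a unit vector $v\in\R^3$ with $v\cdot\tau v=-\gamma$. By the Rayleigh-Ritz characterization, the map $v\mapsto v\cdot\tau v$ sweeps out exactly $[\mu_1,\mu_3]$ on the unit sphere, where $\mu_1\le\mu_2\le\mu_3$ are the eigenvalues of $\tau$. Using $\sum_i\mu_i=0$, $\sum_i\mu_i^2=2q_*^2$ and the ordering, a short computation (e.g.\ solving for the two smaller eigenvalues from a quadratic and enforcing a nonnegative discriminant) gives $\mu_3\ge q_*/\sqrt 3$ and symmetrically $\mu_1\le -q_*/\sqrt 3$, with equality when two of the $\mu_i$ coincide. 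Hence $[-q_*/\sqrt 3,q_*/\sqrt 3]\subseteq[\mu_1,\mu_3]$, so the target value $-\gamma$ lies in the range and a suitable $v$ exists; replacing $B$ by $-B$ if necessary aligns the orientation of the tangent with $e$. The main obstacle is the first step, namely recognizing that the rigidity of the hyperbola form together with the rank constraint forces $B$ to be a rank-two orthogonal projection. After that, everything is algebraic, and the constant $\sqrt 3/4$ in the hypothesis is exactly what makes the target value land in the universal range $[-q_*/\sqrt 3,q_*/\sqrt 3]$, which is the worst case over all $\tau$ with $|\tau|^2=2q_*^2$.
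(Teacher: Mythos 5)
Your proof is correct and follows essentially the same route as the paper: choose the rank-two line $\xi_t=\sigma_*+tB$ with $B$ a rank-two orthogonal projection, reduce the tangent condition to prescribing the Rayleigh quotient $v\cdot\tau v$ of the deviator, and check via the spectral constraints $\sum\mu_i=0$, $\sum\mu_i^2=2q_*^2$ that the attainable range $[\mu_1,\mu_3]$ contains $[-q_*/\sqrt3,q_*/\sqrt3]$. (The paper arrives at $B=\Id-g\otimes g$ by starting from a general $B=ae\otimes e+bf\otimes f$ and showing the hyperbola-type constraint forces $a=b$, which is the rigidity you note; your bound $q_0^2\ge q_*^2/4$ should read $q_0^2\ge 3q_*^2/4$, a harmless slip.)
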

\begin{proof}
Any rank-two line through $\sigma_*$ has the form $t\mapsto \xi_t:=\sigma_*+t B$, for some $B\in\Rtrsym$ with $\det B=0$. Let $a,b$ be the eigenvalues of $B$, and let $e,f$ be a pair of orthonormal vectors such that $B=ae\otimes e + bf\otimes f$. We let $p_*:=p(\sigma_*)$, $q_*:=q(\sigma_*)$ and compute
\begin{equation}\label{eqpxit}
 p(\xi_t)=p_*+\frac{a+b}3 t
\end{equation}
and
\begin{equation}
\begin{split}
 2q^2(\xi_t)=&|\xi_t|^2-3p(\xi_t)^2 \\
=& 2q^2_*+t^2(a^2+b^2 - \frac13 (a+b)^2) \\
&+2t (ae\cdot\sigma_* e+bf\cdot\sigma_* f)-2t p_*(a+b).
 \end{split}
\end{equation}
From (\ref{eqpxit}), we obtain $t=3(p(\xi_t)-p_*)/(a+b)$. Inserting in the previous expression leads to
\begin{equation}
\begin{split}
 2q^2(\xi_t)=& 2q^2_*+6(p(\xi_t)-p_*)^2 \frac{(a+b)^2-3ab}{(a+b)^2} \\
&+6\frac{p(\xi_t)-p_*}{a+b} (ae\cdot\sigma_* e+bf\cdot\sigma_* f)-6p_*(p(\xi_t)-p_*)
 \end{split}
\end{equation}
(the case $a+b=0$ is not relevant, since in this case $t\mapsto p(\xi_t)$ is constant). The expression
\begin{equation}
 \frac{(a+b)^2-3ab}{(a+b)^2}=\frac14 +\frac34 \frac{ (a-b)^2}{(a+b)^2}
\end{equation}
can take any value in $[1/4,\infty)$ and the value $1/4$ is taken if and only if $a=b$. Therefore, the coefficient of the quadratic term $(p(\xi_t)-p_*)^2 $ can be the required value of $3/2$ (see (\ref{eqhyperbsimpl})) if and only if $a=b$. We can scale to $a=b=1$ and obtain
\begin{equation}
\begin{split}
 2q^2(\xi_t)=& 2q^2_*+\frac32(p(\xi_t)-p_*)^2 \\
 &+3(p(\xi_t)-p_*) (e\cdot\sigma_* e+f\cdot\sigma_* f)-6p_*(p(\xi_t)-p_*).
\end{split}
\end{equation}
We are left with the task of choosing $e$ and $f$. Let $g:=e\wedge f$, so that $(e,f,g)$ is an orthonormal basis of $\R^3$. Then,
\begin{equation}
  e\cdot\sigma_* e+f\cdot\sigma_* f+g\cdot\sigma_*g = \Tr \sigma_*=3p_* ,
 \end{equation}
so that, after some rearrangement, the linear term takes the form
\begin{equation}
\begin{split}
&3(p(\xi_t)-p_*)(p_*-g\cdot\sigma_* g).
 \end{split}
\end{equation}
We conclude that the graph of $t\mapsto (p(\xi_t),q(\xi_t))$ is the graph of the curve defined by
\begin{equation}
 2q^2= 2q^2_*+\frac32(p-p_*)^2 +3(p-p_*)(p_*-g\cdot\sigma_* g)
\end{equation}
and its derivative at $p_*$ is given by
\begin{equation}
\left.\frac{dq}{dp}\right|_{p=p_*}=\frac{3}{4q_*} (p_*-g\cdot\sigma_* g).
\end{equation}
It remains to show that we can choose $B$ such that this quantity equals $e_2/e_1$, which is a number in $[-\sqrt3/4,\sqrt3/4]$. To this end, we first show that the ordered eigenvalues $\lambda_1\le\lambda_2\le\lambda_3$ of the matrix $\sigma_D:=\sigma_*-p_*\Id$ obey $\lambda_1\le -q_*/\sqrt3$, $\lambda_3\ge q_*/\sqrt3$. Indeed, assume the former was not the case. If $\lambda_2\le 0$, then $\lambda_3<2 q_*/\sqrt3$ and $\lambda_1^2+\lambda_2^2+\lambda_3^2<(1/3+1/3+4/3 )q_*^2=2q_*^2$, which is a contradiction. If, instead, $\lambda_2\ge0$, then $\lambda_2,\lambda_3\le q_*/\sqrt3$, with the same conclusion. The argument for $\lambda_3$ is similar.

Therefore, the set $\{g\cdot \sigma_D g : g\in S^2\}$ contains the interval $[-q_*/\sqrt3,q_*/\sqrt3]$, and we can choose $g$ (and hence $e$, $f$) such that $p_*-g\cdot\sigma_* g=-g\cdot \sigma_D g=4q_*e_2/(3e_1)\in [-q_*/\sqrt3,q_*/\sqrt3]$.
\end{proof}

\begin{lemma}\label{lemmaABC}
Let $\pmin :=\min\{p: \exists q, (p,q)\in H\}$, $\pmax:=\max\{p: \exists q, (p,q)\in H\}$ and
\begin{equation}
 A:=[\pmin,\pmax],
 \end{equation}
 \begin{equation}
 B:=\{p: p\Id\in \Kinfty\},
 \end{equation}
 \begin{equation}
 C:=\{p: (p,0)\in \Hsdqc\}.
\end{equation}
Assume $\Hsdqc$ is connected. Then, $A=B=C$.
\end{lemma}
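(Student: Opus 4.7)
The plan is to establish the cycle of inclusions $B \subseteq C \subseteq A \subseteq B$; the connectedness hypothesis is only needed for the last inclusion.

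For $B \subseteq C$: combining Lemma \ref{lemmapqsimple} (giving $\Kinfty \subseteq K^\sdqc$) with the outer bound Lemma \ref{lemmalowerboundexpl}, one has $\Phi(\Kinfty) \subseteq \Hsdqc$, so $p \in B$ yields $(p, 0) = \Phi(p\Id) \in \Hsdqc$. For $C \subseteq A$: the definition of $\hat H$ gives $\hat H \cap \{q = 0\} = \pi_1(H) \times \{0\}$, and since $\hat H$ lies in the upper half-plane, $\hat H^\conv \cap \{q = 0\} = [\pmin, \pmax] \times \{0\}$; combined with $\Hsdqc \subseteq \hat H^\conv$ this yields $C \subseteq A$.

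For $A \subseteq B$, I first argue $A \subseteq C$ by contradiction, using connectedness. Suppose some $p^* \in (\pmin, \pmax)$ satisfies $(p^*, 0) \notin \Hsdqc$; then a Tartar separator $f_{y_0^*}$ with $y_0^* = (p_0^*, q_0)$ satisfies $f_{y_0^*}(p^*, 0) > c^* := \max_H f_{y_0^*}$, which upon rearrangement forces $c^* + 4q_0^2 < 0$. Setting $A^* := -(c^* + 4 q_0^2) > 0$, the open set $U := \{(p, q) : q \geq 0,\ 3(p - p_0^*)^2 - 4q^2 < A^*\}$ contains $(p^*, 0)$, is disjoint from $\Hsdqc$, and is bounded by the two upward branches of a $p$-opening hyperbola. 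Consequently $\{q \geq 0\} \setminus U$ splits into two connected components. The conditions $p^* \in U \cap (\pmin, \pmax)$ together with $(\pmin, 0), (\pmax, 0) \notin U$ force $\pmin \leq p_0^* - \sqrt{A^*/3}$ and $\pmax \geq p_0^* + \sqrt{A^*/3}$, so these two points---both in $\hat H \subseteq \Hsdqc$---lie in opposite components, contradicting connectedness of $\Hsdqc$.

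For the remaining inclusion $C \subseteq B$, given $p_0 \in (\pmin, \pmax)$ with $(p_0, 0) \in \Hsdqc$, I would exhibit matrices $\xi = \diag(p_0, p_0, c_+)$ and $\zeta = \diag(p_0, p_0, c_-)$ in $\Kinfty$ with $c_- < p_0 < c_+$. Then $\xi - \zeta = \diag(0, 0, c_+ - c_-)$ has rank one, so Lemma \ref{lemmaranktwo} yields $p_0 \Id = \tfrac{c_+ - p_0}{c_+ - c_-}\zeta + \tfrac{p_0 - c_-}{c_+ - c_-}\xi \in \Kinfty$. Such a matrix $\diag(p_0, p_0, c)$ enters $\Kinfty$ either directly via Lemma \ref{lemmahatHD} when its $\Phi$-image $((2p_0 + c)/3, |c - p_0|/\sqrt 3)$ lies in $\hat H$, or as a point on a rank-$2$ V-segment $t \mapsto p'\Id + t\diag(1,1,0)$ through a previously constructed $p' \Id \in \Kinfty$ at parameter $t = p_0 - p'$ (whose endpoints are in $\hat H$). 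The main technical obstacle---and the step I expect to absorb most of the work---is to show that these two mechanisms together yield suitable $c_\pm$ for every $p_0 \in [\pmin, \pmax]$; this iterative construction relies on the connectedness of $\Hsdqc$ (and the identity $A = C$ just established) to ensure the set of reachable pressures propagates across the entire interval.
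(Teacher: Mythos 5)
Your chain $B\subseteq C\subseteq A$ and the connectedness argument for $A\subseteq C$ are correct; the latter is essentially the paper's observation that $C$ is the orthogonal projection of $\Hsdqc$ onto the $\{q=0\}$ axis (so that connectedness of $\Hsdqc$ forces $C=[\pmin,\pmax]$), recast via an explicit hyperbolic separator, and both routes are fine.

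The genuine gap is the remaining inclusion $C\subseteq B$ (equivalently $A\subseteq B$), which is the heart of the lemma and which you explicitly leave as "the main technical obstacle". In the paper this step takes most of the proof: one argues by contradiction with $\bar p:=\min\{p\in B:p>p_*\}$, uses the sets $D_\pm$ with $D_+\cap D_-\subseteq B\subseteq A\subseteq D_+\cup D_-$, a compactness argument in $H$ to produce a point $(p_0,q_0)\in H$ with $q_0\ge\tfrac{\sqrt3}{2}(\bar p-p_0)$, and then a permutation trick: starting from $\xi_{t_1}=\diag(\bar p+t_1,\bar p+t_1,\bar p)\in\Kinfty$ one swaps coordinates, passes to an interior point $\xi_C$ of a rank-two segment, swaps again to get $\xi_D$, and writes $p_*\Id=\tfrac12(\xi_C+\xi_D)$ with $\rank(\xi_C-\xi_D)=2$. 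None of the intermediate matrices has the form $\diag(p_*,p_*,c)$, whereas your scheme only combines matrices whose repeated eigenvalue equals the target pressure along the $e_3\otimes e_3$ direction. That restriction matters: take $H$ to consist of two points at pressures $0$ and $1$ with $q$-value slightly above $\sqrt3/4$, so that $\Hsdqc$ is connected and the lemma asserts $B=[0,1]$. For $p_0$ near the middle of the interval, your mechanism (i) (image of $\diag(p_0,p_0,c)$ in $\hat H$) requires a point of $H$ with $q\ge\sqrt3\,|p_\pm-p_0|$, which fails on at least one side, and mechanism (ii) starting only from $0\Id$ and $1\Id$ requires $q\ge\sqrt3/2$ at the far endpoint, which also fails; so the iteration does not get started without further input (for instance the paper's direct argument that $p_0\Id$ itself lies between two points of the V-line $\diag(p_0+t,p_0+t,p_0)$ whose $\Phi$-images lie in $\hat H$, valid only for $p_0\in D_+\cap D_-$). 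A bootstrapping along your lines may well be completable, but it needs precisely the propagation and limit-point (compactness in $H$) arguments that the paper supplies and you omit; as written, $C\subseteq B$ is asserted, not proved.
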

We remark that the definition of $A$ immediately implies $\hat H^\conv\subseteq A\times[0,\infty)$.
\begin{proof}
By convexity, we easily obtain $B\subseteq A$ and $C\subseteq A$. By the construction of $\hat H$, we have $\pmin\in C$, $\pmax\in C$. From the construction of $\Hsdqc$, we see that $(p,q)\in \Hsdqc$ implies that the segment joining $(p,q)$ with $(p,0)$ also belongs to $\Hsdqc$. This proves that $\Hsdqc$ is connected if and only if $C$ is connected and that $C$ is the orthogonal projection of $\Hsdqc$ onto the $q=0$ axis. In particular, we have $A=C$.

It remains to show that $A\subseteq B$. By Lemma \ref{lemmahatHD}, we have that $\pmin\in B$ and $\pmax\in B$. We define
 \begin{equation}
D_+:=\bigcup\{[p,p+\frac2{\sqrt3} q]: (p,q)\in H\}
 \end{equation}
and
\begin{equation}
D_-:=\bigcup\{[p-\frac2{\sqrt3} q,p]: (p,q)\in H\} .
\end{equation}

We first show that $D_+\cap D_-\subseteq B$. Indeed, let $p_*\in D_+\cap D_-$ and let $\sigma_*:=p_*\Id$. By assumption, there are $(p_-,q_-), (p_+,q_+)\in H$ such that $p_-\le p_*\le p_+$, $q_-\ge \gamma(p_*-p_-)$, $q_+\ge \gamma (p_+-p_*)$, where $\gamma:=\frac{\sqrt3}2$. In particular, $(p_-, \gamma(p_*-p_-))\in \hat H$ and $(p_+, \gamma(p_+-p_*))\in \hat H$. We consider the rank-two line
 \begin{equation}
 t\mapsto \xi_t := \begin{pmatrix}
     p_*+ t & 0 & 0 \\ 0 & p_* + t & 0\\ 0 & 0 & p_*
     \end{pmatrix}
\end{equation}
and observe that there are $t_-\le 0\le t_+$ such that $p(\xi_{t_\pm})=p_\pm$, $q(\xi_{t_\pm})=\gamma|p_\pm-p_*|$. Lemma \ref{lemmahatHD} implies $\xi_{t_\pm}\in \Kinfty$ and, with Lemma \ref{lemmaranktwo}, one then deduces $\sigma_*=\xi_0\in \Kinfty$.

We next show that $A\subseteq D_+\cup D_-$ Indeed, if $p_*\not\in D_+\cup D_-$ then $q(\sigma)< \frac{\sqrt3}{2} |p(\sigma)-p_*|$ for any $\sigma\in K$. Consider the function $f(p,q):=4q^2-3(p-p_*)^2$. Then, $f(p,q)<0=f(p_*,0)$ for all $(p,q)\in \hat H$, therefore $(p_*,0)$ is separated from $\hat H$ and does not belong to $\Hsdqc$. This implies that $p_*\not\in C=A$.

Up to now we have shown that
\begin{equation}
 D_+\cap D_-\subseteq B\subseteq A\subseteq D_+\cup D_-.
\end{equation}
Assume that there is $p_*\in A\setminus B$. Without loss of generality, assume $p_*\in D_+$. Let $\bar p :=\min\{p\in B: p>p_*\}$. Since $\pmax\in B$, the set is nonempty. Since $B$ is closed, $p_*<\bar p$.
The sets $D_+$ and $D_-$ are compact, cover the interval $[p_*,\bar p]$ and are disjoint in $[p_*,\bar p)$. Therefore, $[p_*,\bar p]\subseteq D_+$.

Let $p'\in (p_*,\bar p)\subseteq D_+$. If there was $q'\ge0$ such that $(p',q')\in H$, then we would have $(p',0)\in\hat H$ and $p'\in B$. Therefore, $[p_*,\bar p)\times[0,\infty)\cap H=\emptyset$.
For any $p'\in (p_*,\bar p)$, there is a point $y=(p_-,q_-)\in H$ with $p_-<p_*$, $q_-\ge \gamma (p'-p_*)$. Consider a sequence of such points, $p'_j\to \bar p$. By compactness of $H$, the corresponding points $y_j=(p^-_j, q^-_j)$ converge (after extracting a subsequence) to some
$y_0=(p_0,q_0)\in H$. Since $p^-_j<p_*$ for all $j$ and $H$ is closed, we have $p_0< p_*$.

We finally consider the rank-two line
\begin{equation}
 t\mapsto \xi_t := \begin{pmatrix}
     \bar p+ t & 0 & 0 \\ 0 & \bar p + t & 0\\ 0 & 0 & \bar p
     \end{pmatrix}.
\end{equation}
Let $t_0$ be such that $\bar p + \frac23 t_0=p_0$. The condition $\bar p\in B$ corresponds to $\xi_0=\bar p \Id\in \Kinfty$, the definition of $y_0$ shows that $\Phi(\xi_{t_0})\in \hat H$ and, with Lemma \ref{lemmahatHD}, we obtain $\xi_{t_0}\in \Kinfty$. Therefore, $\xi_t\in \Kinfty$ for all $t\in[t_0,0]$.

Let now $t_1\in (t_0,0)$ be such that $\bar p + \frac23 t_1=p_*$. After swapping coordinates, we see that the two matrices
\begin{equation*}
 \xi_A:=\xi_{t_1}=\begin{pmatrix}
     \bar p+ t_1 & 0 & 0 \\ 0 & \bar p + t_1 & 0\\ 0 & 0 & \bar p
     \end{pmatrix}\,,\hskip5mm
 \xi_B:=\begin{pmatrix}
     \bar p+ t_1 & 0 & 0 \\ 0 & \bar p & 0\\ 0 & 0 & \bar p+ t_1
     \end{pmatrix}
\end{equation*}
belong to $\Kinfty$. Since $\rank(\xi_A-\xi_B)=2$, so do all matrices in the segment joining them and, in particular,
\begin{equation*}
 \xi_C:=\begin{pmatrix}
     \bar p+ t_1 & 0 & 0 \\ 0 & \bar p + \frac23 t_1 & 0\\ 0 & 0 & \bar p+\frac13 t_1
     \end{pmatrix}\,.
\end{equation*}
Again, swapping coordinates the same is true for
\begin{equation*}
 \xi_D:=\begin{pmatrix}
     \bar p+ \frac13t_1 & 0 & 0 \\ 0 & \bar p + \frac23 t_1 & 0\\ 0 & 0 & \bar p+ t_1
     \end{pmatrix}\,.
\end{equation*}
Since $\rank(\xi_D-\xi_C)=2$ and $p_*\Id=\frac12\xi_D+\frac12\xi_C$, we obtain $p_*\Id\in \Kinfty$. This implies $p_*\in B$, a contradiction. Therefore, we conclude that $A\subseteq B$.
\end{proof}

\begin{lemma}\label{lemmainnerbound1}
Under the assumptions of Theorem \ref{theorelaxexplicitpq}, $\Hsdqc\subseteq \Phi (\Kinfty)$.
\end{lemma}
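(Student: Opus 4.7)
The plan is to split on whether $q_*=0$ and, in the generic case, produce the required matrix via the rank-two-curve machinery of Lemmas \ref{lemmaranktwolines} and \ref{lemmaranktwodirect}. If $y_*=(p_*,0)$, then $p_*\in C$ in the notation of Lemma \ref{lemmaABC}; since $\Hsdqc$ is assumed connected, that lemma yields $C=B$, hence $p_*\Id\in\Kinfty$, and $\sigma_*:=p_*\Id$ satisfies $\Phi(\sigma_*)=y_*$, settling this case.

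Suppose now $q_*>0$. By Lemma \ref{lemmaranktwodirect} it would suffice to find $e\in S^1$ and $t_-<0<t_+$ with $\Gamma_{y_*}(e,t_\pm)\in\hat H$, and I will argue by contradiction. Assume that for every $e\in S^1$ at least one of
\[
\tau_+(e):=\inf\{t>0:\Gamma_{y_*}(e,t)\in\hat H\},\qquad \tau_-(e):=\sup\{t<0:\Gamma_{y_*}(e,t)\in\hat H\}
\]
is infinite. Compactness of $\hat H$ and continuity of $\Gamma_{y_*}$ in $e$ make the sets $U:=\{\tau_+=+\infty\}$ and $V:=\{\tau_-=-\infty\}$ closed; the symmetry $\Gamma_{y_*}(e,t)=\Gamma_{y_*}(-e,-t)$ gives $U=-V$, and by assumption $U\cup V=S^1$. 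Using that the pencil $\{\Gamma_{y_*}(e,\cdot)\}_{e\in S^1}$ covers the punctured upper half-plane $(\R\times[0,\infty))\setminus\{y_*\}$, I plan to combine a connectedness/intermediate-value argument on $S^1$ with the $e\leftrightarrow -e$ symmetry to produce a direction $e^*$ such that $\hat H$ lies in the closure of a single connected component of $(\R\times[0,\infty))\setminus\Gamma_{y_*}(e^*,\R)$, with $y_*$ on the boundary.

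Lemma \ref{lemmaranktwolines} then gives the explicit form of this separating curve. If $e^*\in S^1_-$, the curve is the hyperbola $\{(p,q):4(q^2-q_0^2)=3(p-p_0)^2\}$ for some $(p_0,q_0)\in\R\times[0,\infty)$; setting $y_0:=(p_0,q_0)$ and using $f_{y_0}$ from (\ref{eqdefHstar}), one gets $f_{y_0}\le 0$ on $\hat H$ and $f_{y_0}(y_*)=0$, and a small decrease of $q_0$ upgrades this to strict separation $\max f_{y_0'}(\hat H)<f_{y_0'}(y_*)$, contradicting $y_*\in\Hsdqc$. If instead $e^*\in S^1_+$, the curve is a V-shape $\{q=\alpha|p-p_0|\}$ whose arm through $y_*$ is a half-line; it bounds an affine half-plane containing $\hat H$ and having $y_*$ on its boundary, and a small translation of the affine function (exactly as in the proof of Lemma \ref{lemmalowerboundexpl}, where the sign $c\ge 0$ is arranged by replacing it with its restriction to the $p$-axis) yields the strict separation that contradicts $y_*\in\hat H^\conv$, which in turn holds by $\Hsdqc\subseteq\hat H^\conv$.

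The principal difficulty is the middle step: showing that the covering $S^1=U\cup V$ genuinely forces the existence of the one-sided direction $e^*$. This requires carefully exploiting compactness of $\hat H$, the continuity of $e\mapsto \Gamma_{y_*}(e,\cdot)$, and the reflection symmetry; heuristically, one rotates $e$ around $S^1$ and tracks whether $\hat H$ sits to the ``left'' or the ``right'' of the curve, so that the jump between these configurations must occur at a one-sided configuration. Once this topological lemma is in place, the explicit formulas from Lemma \ref{lemmaranktwolines} mechanically convert the one-sidedness into either a translate of Tartar's function or an affine separator, contradicting in turn the two defining conditions of $\Hsdqc$.
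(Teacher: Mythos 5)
Your overall strategy follows the paper's: split on $q_*=0$, use the pencil of rank-two curves from Lemma \ref{lemmaranktwolines}, derive the existence of a curve through $y_*$ missing the yield surface via a topological argument on $S^1$, and then extract a separating $\div$-quasiconvex function (affine or a translate of Tartar's) to contradict $y_*\in\Hsdqc$. But there is a genuine gap in the separation step, and it is precisely the issue the paper's proof is built to avoid by working with $\hat H_A:=\hat H\cup A\times\{0\}$ rather than $\hat H$.

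The gap is in the case $e^*\in S^1_+$. You claim the V-shape $\{q=b|p-p_0|\}$ ``bounds an affine half-plane containing $\hat H$ and having $y_*$ on its boundary.'' This is false in general when you only know the curve misses $\hat H$. The set $\hat H$ is a union of vertical segments $\{p\}\times[0,h(p)]$ over $p$ in the projection of $H$, which can have gaps inside $A=[\pmin,\pmax]$. If the vertex $p_0$ of the V lies in such a gap, the V misses $\hat H$ while the two legs of $\hat H$ sit on \emph{both} sides of either arm of the V; the region $\{q<b|p-p_0|\}$ is not convex, and neither arm separates $\hat H^\conv$. Since $y_*$ lies on the V and in $\hat H^\conv$, no affine contradiction follows. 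The paper closes this by replacing $\hat H$ with $\hat H_A$ throughout: because $A\times\{0\}\subseteq\hat H_A$ and the V passes through its vertex $(p_0,0)$, a V missing $\hat H_A$ must have $p_0\notin A$, and then (since $A$ is an interval containing $p_*$) the relevant arm is a genuine affine separator. The corresponding change also affects your Case (i): the good direction you need may place one endpoint in $A\times\{0\}\setminus\hat H$ rather than in $\hat H$, and Lemmas \ref{lemmaranktwodirect}, \ref{lemmahatHD} together with the identity $A=B$ from Lemma \ref{lemmaABC} are what make those endpoints usable.

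Two smaller points. First, your sets $U=\{\tau_+=+\infty\}$ and $V=\{\tau_-=-\infty\}$ are \emph{open}, not closed, in $S^1$: it is the complement (the set of directions that do hit a compact target) that is closed, by compactness of $\hat H_A$, properness of $t\mapsto\Gamma_{y_*}(e,t)$, and joint continuity. The conclusion you want, $U\cap V\ne\emptyset$, still holds — disjoint open $U,V$ with $U=-V$ covering the connected $S^1$ would force $U=V=\emptyset$ — but the stated reason (closedness and an intermediate-value argument) is not the right mechanism; the paper instead argues on the closed sets $D(y_*)$ and $-D(y_*)$ directly. Second, you acknowledge yourself that the topological step is ``heuristic,'' and you omit the easy sub-case $y_*\in\hat H$ (handled immediately by Lemma \ref{lemmahatHD}). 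In short, the skeleton is right, but the argument as written does not close because it dispenses with $\hat H_A$, which is exactly the device that makes the V-shape case work.
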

\begin{proof}
We fix $y_*=(p_*, q_*)\in \Hsdqc$. If $q_*=0$, then, in the notation of Lemma \ref{lemmaABC}, we have $p_*\in C=B$ and therefore $p_*\Id\in \Kinfty$. If $y_*\in \hat H$, then the result follows from Lemma \ref{lemmahatHD}.

It remains to consider the case $y_*\in \Hsdqc\setminus\hat H$ and $q_*>0$. We consider the set of directions such that the rank-two line constructed in Lemma \ref{lemmaranktwolines} intersects $\hat H_A:=\hat H\cup A\times \{0\}$, where $A$ is the set constructed in Lemma \ref{lemmaABC} and define
\begin{equation}
 D(y_*):=\{e\in S^1: \Gamma_{y_*}(e,[0,\infty))\cap \hat H_A\ne\emptyset\}
\end{equation}
(this is illustrated in Figure \ref{fig-s1}). By continuity of $\Gamma_{y_*}$ and compactness of $\hat H_A$, it follows that $D(y_*)$ is a closed subset of $S^1$.

We now distinguish two cases. If there is $e\in D(y_*)\cap -D(y_*)$, then there are $t_-<0<t_+$ such that $\Gamma_{y_*}(e,t_\pm)\in \hat H_A$ and Lemma \ref{lemmaranktwodirect} implies that $y_*\in\Phi (\Kinfty)$.

If instead there is no such $e$, then $D(y_*)$ and $-D(y_*)$ are disjoint. Since they are both closed, and $S^1$ is connected, they cannot cover $S^1$. In particular, there is $e\in S^1$ such that $e,-e\not\in D(y_*)$.

In the notation of Lemma \ref{lemmaranktwolines}, if $e\in S^1_+$ then the curve $\Gamma_{y_*}(e,\R)$ is the graph of $q=b|p-p_0|$ for some $b\ge \sqrt3/2$, $p_0\in\R$ such that $q_*=b|p_*-p_0|$. Assume, for definiteness, that $p_0>p_*$. The remaining case is identical up to a few signs.

This curve does not intersect $\hat H_A$ and, by the form of $\hat H_A$, this implies that $q<b|p-p_0|$ for all $(p,q)\in \hat H_A$. In particular, $p_0\not\in A$. Since $A$ is an interval and $p_*\in A$, we have that $A\subseteq (-\infty,p_0)$ and $H^\conv\subseteq(-\infty,p_0)\times[0,\infty)$. Hence, $q<b(p-p_0)$ for all $(p,q)\in \hat H$ and, by convexity, $q<b(p-p_0)$ for all $(p,q)\in \hat H^\conv$. But this contradicts the assumption $(p_*,q_*)\in \Hsdqc$.

The case $e\in S^1_-$ is similar. The curve $\Gamma_{y_*}(e,\R)$ is of the type $\{f_{y_1}(\cdot)=0\}$, for some $y_1$. Then, $f_{y_1}(y_*)=0$ but $f_{y_1}<0$ on $\hat H$, so that $y_*$ is separated from $\hat H^\conv$, contradicting the assumption that $y_*\in \Hsdqc$.
\end{proof}

\begin{lemma}\label{lemmainnerbound2}
Under the assumptions of Theorem \ref{theorelaxexplicitpq}, if additionally the tangent to $\partial \Hsdqc$ belongs to $\{e\in S^1: |e_2|\le \frac{\sqrt3}{4}|e_1|\}$ for any $y_*\in \partial \Hsdqc\setminus \hat H$, then any $\sigma$ with $\Phi(\sigma)\in\Hsdqc$ belongs to $\Kinfty$.
\end{lemma}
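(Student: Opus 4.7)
The plan is to produce, for each $y_*=(p_*,q_*)\in\Hsdqc$, a rank-two hyperbola through $y_*$ of type~(\ref{ranktwolinespl}) whose tangent direction at $y_*$ satisfies the slope bound $|e_2|\le(\sqrt3/4)|e_1|$ of the hypothesis, and which meets $\hat H_A:=\hat H\cup(A\times\{0\})$ on both sides of $y_*$. Granted such a hyperbola, the second assertion of Lemma~\ref{lemmaranktwodirect} (combined with Lemma~\ref{lemmahatHD} and Lemma~\ref{lemmaABC} to identify the endpoints as elements of $\Kinfty$, and with Lemma~\ref{lemmaranktwo}) yields $\sigma\in\Kinfty$ for every $\sigma\in\Phi^{-1}(y_*)$.

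The easy cases come first. If $y_*\in\hat H$, Lemma~\ref{lemmahatHD} applies directly; if $q_*=0$, then $\sigma=p_*\Id$ with $p_*\in A=B$ by Lemma~\ref{lemmaABC}. So one may assume $y_*\in\Hsdqc\setminus\hat H$ with $q_*>0$, and the task is to construct the slope-bounded rank-two hyperbola through $y_*$.

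The main step exploits the structural observation that, by Lemma~\ref{lemmatartar2} and the construction of $\Hsdqc$, the part of $\partial\Hsdqc$ disjoint from $\hat H\cup(A\times\{0\})$ is a union of arcs of level sets $\{f_{y_0}=c\}$ of Tartar-type functions, and these level sets are precisely the rank-two hyperbolas of type~(\ref{ranktwolinespl}). For $y_*\in\partial\Hsdqc\setminus\hat H$ the tangent furnished by the hypothesis satisfies the slope bound, so the tangent rank-two hyperbola coincides locally with an arc of $\partial\Hsdqc$; moreover it passes through the maximizers of $f_{y_0}|_H$, all of which lie in $\hat H$. The key sub-claim is that these maximizers sit on both sides of $y_*$ along the hyperbola: otherwise a small perturbation of $y_0$ would separate $y_*$ from $\hat H$ by a nearby Tartar function, contradicting $y_*\in\Hsdqc$. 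For $y_*\in\mathrm{int}(\Hsdqc)$, I would pick any $\sigma$ with $\Phi(\sigma)=y_*$, use Lemma~\ref{lemmaconstructranktwodir} to produce a one-parameter family of rank-two lines through $\sigma$ indexed by slope $m\in[-\sqrt3/4,\sqrt3/4]$, observe that the exit points $y_\pm(m)\in\partial\Hsdqc$ of the $\Phi$-image vary continuously in $m$, and combine the boundary case applied to each $y_\pm(m)$ with an intermediate-value argument to find an $m$ for which both exit points lie in $\hat H_A$.

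The main obstacle is the sub-claim that the maximizers of $f_{y_0}|_H$ are distributed on both sides of $y_*$ along the tangent hyperbola. Making this precise requires a careful perturbation argument showing that one-sided maximizers would permit a small rotation of $y_0$ yielding a genuine separator of $y_*$; additionally, one must verify that between $y_*$ and the nearest maximizer on each side the hyperbola remains inside $\hat H^\conv$, so that its endpoints are in $\hat H\subseteq\hat H_A$ and can be reached along a single rank-two line through $\sigma$ via Lemma~\ref{lemmaconstructranktwodir}.
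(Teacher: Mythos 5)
Your reduction to the boundary case (the two easy subcases, then $y_*\in\Hsdqc\setminus\hat H$ with $q_*>0$) is the right start, but the heart of your boundary argument contains a genuine gap that you yourself flag as ``the main obstacle,'' and it is not filled. You propose to identify the tangent hyperbola to $\partial\Hsdqc$ at $y_*$ with a level set of a Tartar function $f_{y_0}$, and then argue that the maximizers of $f_{y_0}|_H$ must sit on both sides of $y_*$ along it; the ``small perturbation of $y_0$'' you invoke to rule out one-sided maximizers would itself require a careful verification (that the perturbed $f$ is a separator, that the perturbation stays in the admissible family, etc.) and is nowhere carried out. Moreover, the structural claim that $\partial\Hsdqc\setminus(\hat H\cup A\times\{0\})$ consists only of Tartar-hyperbola arcs is not accurate: portions of $\partial\hat H^\conv$ that cannot be separated contribute flat pieces to $\partial\Hsdqc$ as well, and these are not level sets of any $f_{y_0}$.

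The paper avoids your sub-claim entirely by recycling the topological dichotomy already established in the proof of Lemma~\ref{lemmainnerbound1}: the set $D(y_*)$ of directions $e\in S^1$ for which $\Gamma_{y_*}(e,[0,\infty))$ meets $\hat H_A$ is closed, and if $D(y_*)\cap(-D(y_*))=\emptyset$ then $D(y_*)$ and $-D(y_*)$ are disjoint closed sets failing to cover the connected space $S^1$, which produces a direction $e$ with $\pm e\notin D(y_*)$ and hence (via a linear or a Tartar separator, depending on whether $e\in S^1_+$ or $e\in S^1_-$) a contradiction with $y_*\in\Hsdqc$. So there \emph{is} some $e\in D(y_*)\cap(-D(y_*))$, which delivers your ``both sides'' conclusion with no perturbation argument. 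The next observation, which you do not make explicit, is that the rank-two curve $\Gamma_{y_*}(e,t)$ stays inside $\Hsdqc$ for $t\in[t_-,t_+]$ (by Lemma~\ref{lemmahatHD} and Lemma~\ref{lemmaranktwo} applied to its preimage line) while passing through the boundary point $y_*$ at $t=0$, hence $e$ is necessarily \emph{tangent} to $\partial\Hsdqc$ at $y_*$, and $e\ne(0,\pm1)$ since otherwise $y_*\in\hat H$. Only now does the hypothesis enter: it forces $|e_2|\le\tfrac{\sqrt3}{4}|e_1|$, which is exactly what Lemma~\ref{lemmaconstructranktwodir} needs to re-route the same $(p,q)$-curve through the prescribed matrix $\sigma_*$. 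Finally, your intermediate-value argument for interior $y_*$ is unnecessary and partially redundant: once every matrix over $\partial\Hsdqc$ is known to lie in $\Kinfty$, the vertical trace-free rank-two direction $\diag(1,-1,0)$ from Lemma~\ref{lemmahatHD} keeps $p$ fixed, drives $q$ up to $\psi(p)$ on both sides, and lands on boundary points directly; you need not aim for endpoints in $\hat H_A$ at all.
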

In particular, the assumption implies that $\partial\Hsdqc$ is differentiable (as a graph) at any point not belonging to $\hat H$, but does not require differentiability on $\hat H$.
\begin{proof}
The argument is similar to the proof of the previous Lemma. By construction of $\Hsdqc$, there is a map $\psi: A\to[0,\infty)$ such that
\begin{equation}
 \Hsdqc=\{(p,q): p\in A, 0\le q\le \psi(p)\}.
\end{equation}
We first show that any $\sigma_*$ such that $(p_*,q_*):=\Phi(\sigma_*)\in\partial\Hsdqc$ belongs to $\Kinfty$. We distinguish several cases. If $q_*=0$, then $p_*\in A$ and the claim follows from the equality $A=B$ in Lemma \ref{lemmaABC}. If $(p_*,q_*)\in \hat H$, then the claim follows from Lemma \ref{lemmahatHD}. It remains the case that $(p_*,q_*)\in \hat H^\conv\setminus \hat H$ and cannot be separated from $\hat H$.

At this point, we repeat the argument in Lemma \ref{lemmainnerbound1}. In particular, since $y_*\in \Hsdqc$ we know that there is $e\in S^1$ such that $e\in D(y_*)\cap -D(y_*)$. This means that there are $t_-<0<t_+$ such that $\Gamma_{y_*}(e,t_\pm)\in\hat H$ and that $\Gamma_{y_*}(e,t)\in \Hsdqc$ for all $t\in [t_-,t_+]$. This implies that $\Gamma_{y_*}(e,\cdot)$ is tangent to $\partial\Hsdqc$ at $t=0$ and, in particular, that $e$ is tangent to $\partial\Hsdqc$. We remark that $e$ cannot be $(0,\pm1)$, since in that case we would have $y_*\in\hat H$, a case we have already dealt with.

Therefore, $|e_2|\le \frac{\sqrt3}{4}|e_1|$, so that by Lemma \ref{lemmaconstructranktwodir}
we obtain that $\Phi(\xi_{s_\pm})\in\hat H$, which by Lemma \ref{lemmahatHD} implies $\xi_{s_\pm}\in \Kinfty$. Therefore, $\sigma_*=\xi_0\in \Kinfty$.

This shows that for any $p\in A$ and matrix $\sigma$ with $\Phi(\sigma)=(p,\psi(p))$ belongs to $\Kinfty$. The argument of Lemma \ref{lemmahatHD} then concludes the proof.
\end{proof}

\begin{figure}
 \begin{center}
 \includegraphics[width=6cm]{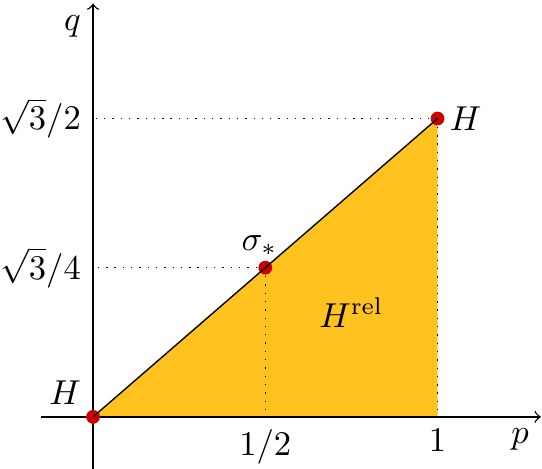}
 \end{center}
 \caption{Sketch of the sets $H$ and $\Hsdqc$ in the proof of Lemma \ref{lemmanotcylndrical}. The three points marked correspond to $\sigma_*$ and to the points of $H$.}
\label{figlemmanotcylndrical}
\end{figure}

We finally show that $\Hsdqc=\Phi(\Kinfty)$ does not imply $\Kinfty=\Phi^{-1}(\Hsdqc)$. We refer to Figure \ref{figlemmanotcylndrical} for an illustration.
\begin{lemma}\label{lemmanotcylndrical}
Let $H:=\{(0,0),(1,\sqrt3/2)\}$, and define $K$ as in (\ref{eqKfromH}). Then, $\Hsdqc=\{(p,q): 0\le p\le 1, 0\le q \le \sqrt 3 p/2\}$, the matrix $\sigma_*:=\diag(1,1/4,1/4)$ obeys $(p(\sigma_*), q(\sigma_*)) = (1/2,\sqrt3/4)\in \Hsdqc$, but $\sigma_*\not\in \Kinfty\subseteq\Ksdqc$.
\end{lemma}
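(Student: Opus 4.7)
My plan proceeds in three stages.

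First, I would compute $\hat H$ and its convex hull explicitly: $\hat H=\{(0,0)\}\cup\{(1,q):0\le q\le \sqrt3/2\}$, so $\hat H^{\mathrm{conv}}$ is the triangle $T$ with vertices $(0,0),(1,0),(1,\sqrt3/2)$, equivalently $T=\{(p,q):0\le p\le 1,\ 0\le q\le \sqrt3\, p/2\}$. I would then verify $\Hsdqc=T$ by showing no $y_*\in T$ can be separated from $\hat H$ by a test function $f_{y_0}$. The key observation is that the hypotenuse $q=\sqrt3 p/2$ of $T$ lies on the Tartar cone $\{f_{(0,0)}=0\}$ and $f_{(0,0)}$ vanishes at both points of $H$; consequently every $y_*\in T$ satisfies $f_{(0,0)}(y_*)\le 0=\max f_{(0,0)}(H)$, and a short case analysis on other choices of $y_0$ (splitting on the sign of $p_0$ and comparing with $p_0=0$ and $p_0=1$) rules out any other separator. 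A direct computation then gives $\Phi(\sigma_*)=(1/2,\sqrt3/4)$, which lies on the hypotenuse and hence in $\Hsdqc$.

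Second, to show $\sigma_*\notin\Kinfty$, I would reduce to an analysis of probability measures on $K$. If $\sigma_*\in\Kinfty\subseteq \Ksdqc$, the Young-measure characterization supplies a probability measure $\nu$ supported in $K$ with barycenter $\sigma_*$ satisfying Jensen's inequality $\int g\,d\nu\ge g(\sigma_*)$ for every continuous \sdqclong\ $g$. Write $\nu=t\delta_0+(1-t)\nu_1$ with $\nu_1$ a probability on $K_1=\{\sigma:p(\sigma)=1,\,q(\sigma)=\sqrt{3}/2\}$. The trace condition $\Tr\sigma_*=3/2$ forces $t=1/2$, and then the barycenter of $\nu_1$ equals $D:=2\sigma_*=\diag(2,1/2,1/2)$. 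Since $D$ lies on the sphere $K_1=\{\sigma:|\sigma-\Id|^2=3/2\}$ and $|\cdot-\Id|^2$ is strictly convex, the barycenter-on-sphere condition forces $\nu_1=\delta_D$. Thus $\nu=\tfrac12(\delta_0+\delta_D)$ is the unique candidate.

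Third, the main obstacle is to exhibit a continuous \sdqclong\ function $g$ that violates Jensen for this $\nu$: $g(\sigma_*)>\tfrac12\bigl(g(0)+g(D)\bigr)$. Such $g$ must be strictly concave along the segment $t\mapsto tD$ at $t=1/2$; because $D$ is invertible (rank $3$), this segment lies in a direction outside the characteristic cone, so \sdqclong\ does not force convexity there and such $g$ can exist in principle. However, every quadratic of the form $\alpha f_T+\beta|\cdot|^2+L+c$ with $\alpha,\beta\ge 0$ automatically satisfies Jensen along this segment (since $f_T$ vanishes at $0,\sigma_*,D$, while the remaining pieces give a convex or linear restriction in $t$), so one must use a subtler \sdqclong\ function. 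A natural approach is to exploit the eigenvalue asymmetry by constructing $g$ as a supremum over $\mathrm{SO}(3)$ of translated Tartar functions $f_T(\sigma-R^T A_0 R)$, with $A_0$ chosen to break the rotational symmetry in a manner sensitive to the distinction between $D$ (two equal eigenvalues $1/2$) and $\sigma_*$ (two equal eigenvalues $1/4$); the hardest step is to select $A_0$ and verify both the \sdqclong\ property and the strict Jensen violation. Once such $g$ is produced, the absence of an admissible measure forces $\sigma_*\notin\Ksdqc$ and in particular $\sigma_*\notin\Kinfty$.
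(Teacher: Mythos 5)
Your first two stages are sound: the identification of $\Hsdqc=\hat H^\conv$ and the computation $\Phi(\sigma_*)=(1/2,\sqrt3/4)$ are correct (the paper treats this as immediate), and the Young-measure reduction showing that the only admissible probability measure on $K$ with barycenter $\sigma_*$ and the right Jensen behavior is $\nu=\tfrac12(\delta_0+\delta_D)$ with $D=2\sigma_*$ is a valid alternate route (the paper instead subtracts the linear function $h(\xi)=2p(\xi)-\xi_{11}$, which is automatically \sdqclong, vanishes on $\{0,\sigma_*,D\}$, and is strictly positive on $K\setminus\{0,D\}$; this reduces the question to whether $\sigma_*\in\{0,D\}^{(\infty)}$ with no Young-measure machinery). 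Either way, the crux is the same: produce a continuous nonnegative \sdqclong\ $g$ with $g(\sigma_*)>\max\{g(0),g(D)\}$.

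Your stage three, however, is a genuine dead end rather than an incomplete step. Any function of the form $f_T(\sigma-R^TA_0R)$ differs from $f_T(\sigma)$ by an affine function of $\sigma$ (since $f_T$ is quadratic, $f_T(\sigma-A)=f_T(\sigma)-2B(\sigma,A)+f_T(A)$), and $f_T$ already satisfies Jensen with equality for the candidate $\nu$ because $f_T(0)=f_T(\sigma_*)=f_T(D)=0$. Hence every member of your proposed family satisfies $g(\sigma_*)=\tfrac12(g(0)+g(D))$, and a pointwise supremum of such functions still satisfies $\sup g(\sigma_*)\le\tfrac12(\sup g(0)+\sup g(D))$. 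No amount of rotating $A_0$ helps, because the translation only produces affine perturbations. The missing ingredient is a \emph{non-orthogonal} change of variables: Lemma \ref{lemmachangevariables} shows that \sdqclong{}ness is preserved under $\sigma\mapsto A^{-1}(\sigma-B)A^{-T}$ for any invertible $A$, not merely orthogonal $A$, and $f_T$ is \emph{not} invariant under such conjugations. Taking $A=D^{1/2}$ and normalizing brings $\{0,D\}$ to $\{-\Id,\Id\}$ and $\sigma_*$ to $0$, and then $g(\xi)=\bigl(f_T(\xi)+3\bigr)_+$ separates, as in the proof of Lemma \ref{lemmatwomatrix}. In short: the separating function is a \emph{conjugated} Tartar function (plus a cutoff), not a translated or rotated one, and without Lemma \ref{lemmachangevariables} your construction cannot close.
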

\begin{proof}
The formula for $\Hsdqc$ follows immediately from the definition in (\ref{eqdefhatH}--\ref{eqdefHstar}); the fact that $\sigma_*\in \Hsdqc$ from the definition of $p$ and $q$ in (\ref{eqdefpqsigma}). Lemma \ref{lemmapqsimple} shows that
$\Kinfty\subseteq\Ksdqc$.

It remains to prove that $\sigma\not\in \Kinfty$. Since $\rank \sigma_*=3$, Lemma \ref{lemmatwomatrix} implies $\{0, 2\sigma_*\}^{(\infty)}=\{0, 2\sigma_*\}$. Therefore, it suffices to show that $\sigma_*\in \Kinfty$ would imply $\sigma_*\in \{0, 2\sigma_*\}^{(\infty)}$.

We first define $h:\Rtrsym\to\R$, $h(\xi):=2p(\xi)-\xi_{11}$ and observe that $h(0) = h(\sigma_*) = h(2\sigma_*) = 0$. We fix any $\xi\in K\setminus\{0\}$. Then, necessarily $p(\xi)=1$ and $q(\xi)=\sqrt3/2$. Recalling that $ 2q^2(\xi)=|\xi-p(\xi)\Id|^2$ and $\xi_{33}=3p(\xi)-\xi_{11}-\xi_{22}$, we compute
\begin{equation}\label{eqnotcylindr}
 \begin{split}
\frac32= 2q^2(\xi)&=|\xi-p(\xi)\Id|^2= |\xi-\Id|^2\\
 &\ge (\xi_{11}-1)^2+(\xi_{22}-1)^2+(2-\xi_{11}-\xi_{22})^2\\
 &\ge (\xi_{11}-1)^2+2(\frac12-\frac{\xi_{11}}2)^2 = \frac32 (\xi_{11}-1)^2
 \end{split}
\end{equation}
and we conclude that $\xi_{11}\le 2$, so that $h(\xi)\ge 0$. Furthermore, if $h(\xi)=0$ then necessarily $\xi_{11}=2$, so that equality holds throughout in (\ref{eqnotcylindr}). This, in turn, implies that $\xi=2\sigma_*$. We have therefore proven that $h\ge 0$ on $K$, with $\{h=0\}\cap K=\{0,2\sigma_*\}$.

We now assume $\sigma_*\in \Kinfty$, so that, for any $g\in C^0(\Rtrsym;[0,\infty))$ which is \sdqclong, $g(\sigma_*)\le \max g(K)$. In order to show that $\sigma_*\in \{0, 2\sigma_*\}^{(\infty)}$, we fix a function $f\in C^0(\Rtrsym;[0,\infty)$ which is \sdqclong, and let $\alpha:=\max\{f(0),f(2\sigma_*)\}$. We need to show that $f(\sigma_*)\le \alpha$.

Fix $\eps>0$. By continuity there is $\delta>0$ such that $f\le \alpha+\eps$ on $B_\delta(2\sigma_*)$. Let $M:=\max f(K)\ge\alpha$, $m:=\min h(K\setminus \{0\}\setminus B_\delta(2\sigma_*))>0$. We define
\begin{equation}
 g(\xi) := f(\xi) - (M-\alpha)\frac{h(\xi)}{m} .
\end{equation}
Then, $g(0)=f(0)\le\alpha$, $g\le \alpha+\eps$ on $K\cap B_\delta(2\sigma_*)$, $g\le M-(M-\alpha)=\alpha$ on the rest of $K$, and $g$ is continuous and \sdqclong. The function $g_+=\max\{g,0\}\in C^0(\Rtrsym;[0,\infty))$ obeys $\max g_+(K)\le \alpha+\eps$. Since $\sigma_*\in \Kinfty$, we have $f(\sigma_*)=g_+(\sigma_*)\le \alpha+\eps$. But $\eps$ was arbitrary, hence we conclude that $f(\sigma_*)\le \max f(\{0,2\sigma_*\})$. Therefore, $\sigma_*\in \{0,2\sigma_*\}^\sdqc$, as claimed, and the proof is concluded.
\end{proof}

\subsection{Examples}
We close by presenting two specific examples for which the \sdqclong\ hull can be explicitly characterized.

\begin{lemma}
Let $p_1,q_1>0$, with $0<p_1<2q_1/\sqrt3$, and let $H:=\{(-p_1,q_1),(p_1,q_1)\}$. Then,
\begin{equation}\label{eqtwopointshre}
  \Hsdqc=\{(p,q): -p_1\le p\le p_1, 0\le q \le \sqrt{q_1^2+\frac34 (p^2-p_1^2)}\}
\end{equation}
and $\Phi(K^\sdqc)=\Hsdqc$. If, additionally, $p_1\le q_1/\sqrt3$, then
\begin{equation}
\begin{split}
 K^\sdqc&=\{\sigma: \Phi(\sigma)\in \Hsdqc\}\\
 &=\{\sigma: p(\sigma)\in[-p_1,p_1], q^2(\sigma)-\frac34 p^2(\sigma)\le q_1^2-\frac34 p_1^2\}.
\end{split}
\end{equation}
\end{lemma}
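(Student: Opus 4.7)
The approach is to identify $\hat H$, $\hat H^\conv$ and $\Hsdqc$ explicitly for this two-point $H$, verify that $\Hsdqc$ is connected, and then invoke Theorems~\ref{theorelaxexplicitpq} and~\ref{lemmaKfullcharsmallder}. From definition (\ref{eqdefhatH}), $\hat H$ is the union of the two vertical segments $\{\pm p_1\}\times[0,q_1]$, so $\hat H^\conv=[-p_1,p_1]\times[0,q_1]$. The hypothesis $0<p_1<2q_1/\sqrt3$ makes $q_0^\star:=\sqrt{q_1^2-\tfrac34 p_1^2}$ real, and with $y_0^\star:=(0,q_0^\star)$ the Tartar function of Lemma~\ref{lemmatartar2} becomes $f_{y_0^\star}(p,q)=4q^2-3p^2-(4q_1^2-3p_1^2)$, which vanishes at both points of $H$ and is strictly positive precisely above the arc $\{q^2=q_1^2+\tfrac34(p^2-p_1^2)\}$. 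Hence every point of $\hat H^\conv$ strictly above this arc is separated from $\hat H$ and excluded from $\Hsdqc$.

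For the reverse inclusion I would show that no $y_0=(p_0,q_0)$ separates any $y_*=(p_*,q_*)\in\hat H^\conv$ lying on or below that arc. Using the symmetry $(p_0,-p_0)\leftrightarrow(+p_1,-p_1)$ I may assume $p_0\ge 0$, in which case $\max f_{y_0}(H)$ is attained at $(p_1,q_1)$. A direct expansion then gives
\begin{equation*}
f_{y_0}(y_*)-\max f_{y_0}(H)=(4q_*^2-3p_*^2)-(4q_1^2-3p_1^2)+6p_0(p_*-p_1),
\end{equation*}
whose first bracket is $\le 0$ by the hyperbolic bound on $y_*$ and whose second term is $\le 0$ because $|p_*|\le p_1$ and $p_0\ge 0$. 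This establishes (\ref{eqtwopointshre}). Since the resulting set is star-shaped with respect to the origin it is connected, and Theorem~\ref{theorelaxexplicitpq} then yields $\Phi(K^\sdqc)=\Hsdqc$.

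For the second assertion I would verify the tangent hypothesis of Theorem~\ref{lemmaKfullcharsmallder} on $\partial\Hsdqc\setminus\hat H$. This boundary piece consists of the bottom segment $[-p_1,p_1]\times\{0\}$, which has tangent $(\pm 1,0)$ and so trivially satisfies $|e_2|\le\tfrac{\sqrt3}{4}|e_1|$, and of the arc $q=g(p):=\sqrt{q_1^2+\tfrac34(p^2-p_1^2)}$ for $|p|\le p_1$, with $g'(p)=3p/(4g(p))$. The requirement $|g'(p)|\le\sqrt3/4$ rearranges to $3p^2\le g(p)^2=q_1^2+\tfrac34(p^2-p_1^2)$, i.e.\ $9p^2\le 4q_1^2-3p_1^2$; the worst case $|p|=p_1$ reduces this to exactly $p_1\le q_1/\sqrt3$. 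Under this additional hypothesis Theorem~\ref{lemmaKfullcharsmallder} applies and gives $K^\sdqc=\Phi^{-1}(\Hsdqc)$; the second equality in the statement is then merely the algebraic rewriting of $\Hsdqc$ using $q^2-\tfrac34 p^2\le q_1^2-\tfrac34 p_1^2$. The only step requiring real thought is the non-separation argument in the second paragraph, but once the symmetry reduction to $p_0\ge 0$ is made it collapses to two elementary sign checks, so I do not expect any genuine obstacle.
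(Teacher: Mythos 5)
Your proposal is correct and follows essentially the same route as the paper: the same identification of $\hat H$ and $\hat H^\conv$, separation above the arc by the single Tartar-type function centered at $(0,\sqrt{q_1^2-\tfrac34 p_1^2})$, an elementary sign computation (your WLOG $p_0\ge 0$ reduction is just the paper's $|p_1p_0|$ estimate) showing no $f_{y_0}$ separates points on or below the arc, and the slope check $\tfrac34 p_1/q_1\le \tfrac{\sqrt3}{4}$ to invoke Theorem \ref{lemmaKfullcharsmallder}. No gaps.
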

We refer to Figure \ref{fig-twop-a} for an illustration.
\begin{proof}
We observe that $\hat H=\{-p_1,p_1\}\times[0,q_1]$ and $\hat H^\conv=[-p_1,p_1]\times [0, q_1]$.

Let $W:=\{(p,q): -p_1\le p\le p_1, q^2-\frac34 p^2\le q_1^2-\frac34p_1^2, q\ge0\}$ be the set in (\ref{eqtwopointshre}). We first show that $\Hsdqc\subseteq W$. We define $q_0:=\sqrt{q_1^2-\frac34 p_1^2}$ and consider the corresponding function $f_{(0,q_0)}(p,q)=4(q^2-q_0^2)-3p^2 = 4(q^2-q_1^2)-3(p^2-p_1^2)$. Then, $f_{(0,q_0)}\le 0$ on $\hat H$, and $f_{(0,q_0)}>0$ on $\hat H^\conv\setminus W$. Recalling (\ref{eqdefHstar}), we obtain $\Hsdqc\subseteq W$.

To obtain the remaining inclusion, it suffices to show that we cannot separate any point of $W$ from $\hat H$. We fix a point $(p,q)\in W$ and consider a generic pair $y_0=(p_0,q_0)\in\R\times[0,\infty)$. The function $f_{y_0}$ separates $(p,q)$ from $\hat H$ if
\begin{equation}
 \max \{4(q_1^2-q_0^2)-3(p_1\pm p_0)^2\} < 4(q^2-q_0^2) - 3 (p-p_0)^2,
\end{equation}
which, expanding all squares, is the same as
\begin{equation}
 4q_1^2-3 p_1^2 + 6 |p_1p_0|< 4q^2 - 3 p^2 + 6 pp_0.
\end{equation}
{}From $(p,q)\in W$ we obtain
$|p|\le p_1$, which implies $6pp_0\le 6 |p_1p_0|$, and
$ 4q^2 - 3 p^2 \le 4q_1^2-3 p_1^2 $. Summing the two gives
\begin{equation}
 4q^2 - 3 p^2 + 6 pp_0\le 4q_1^2-3 p_1^2 + 6 |p_1p_0|,
\end{equation}
which means that we cannot separate $(p,q)$ from $\hat H$. Therefore, $W\subseteq \Hsdqc$.

From the definition and the condition $p_1<2q_1/\sqrt3$, we see that $\Hsdqc$ is connected, so that
the first assertion directly follows from Theorem \ref{theorelaxexplicitpq}.

To prove the second assertion we need only control the slope of the boundary. The vertical sides of $\Hsdqc$ belong to $\hat H$. The slope of the hyperbola is maximal at the two extreme points, i.~e., at $(\pm p_1, q_1)$. Differentiating $q^2-\frac34p^2=c$, we obtain $q'q=\frac34 p'p$, which implies that $|q'|/|p'|=\frac34 p_1/q_1$. If $p_1\le q_1/\sqrt3$, this implies that the slope is not larger than $\frac{\sqrt3}4$. The conclusion then follows from Lemma \ref{lemmaKfullcharsmallder}.
\end{proof}

\begin{figure}[t]
 \begin{center}
 \includegraphics[width=5cm]{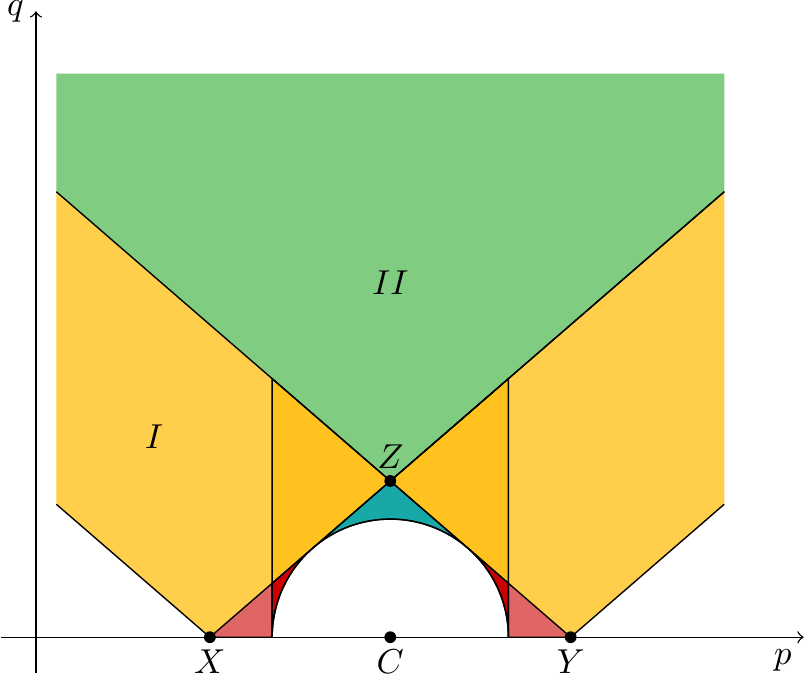}
 \end{center}
 \caption{Different regions for the location of $D$ with respect to the circle in the construction of (\ref{eqHcirclepoint}), see Lemma \ref{lemmapointcirc}. The constructions in regions $I$ and $II$ are shown in Figure \ref{figpointcircI} and Figure \ref{figpointcircII}, respectively.}
\label{figpointcircpd}
\end{figure}

\begin{figure}[t]
 \begin{center}
 \includegraphics[width=5cm]{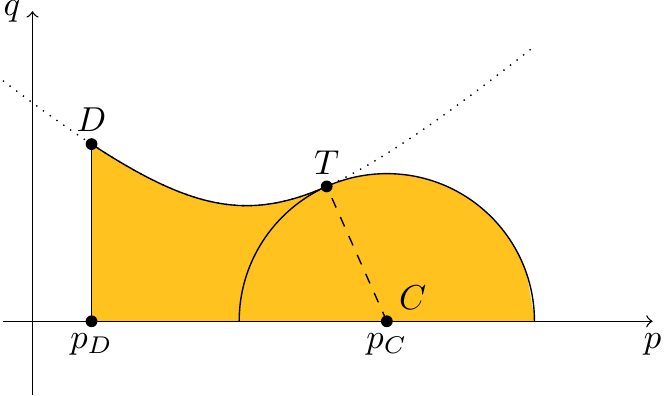}
 \includegraphics[width=5cm]{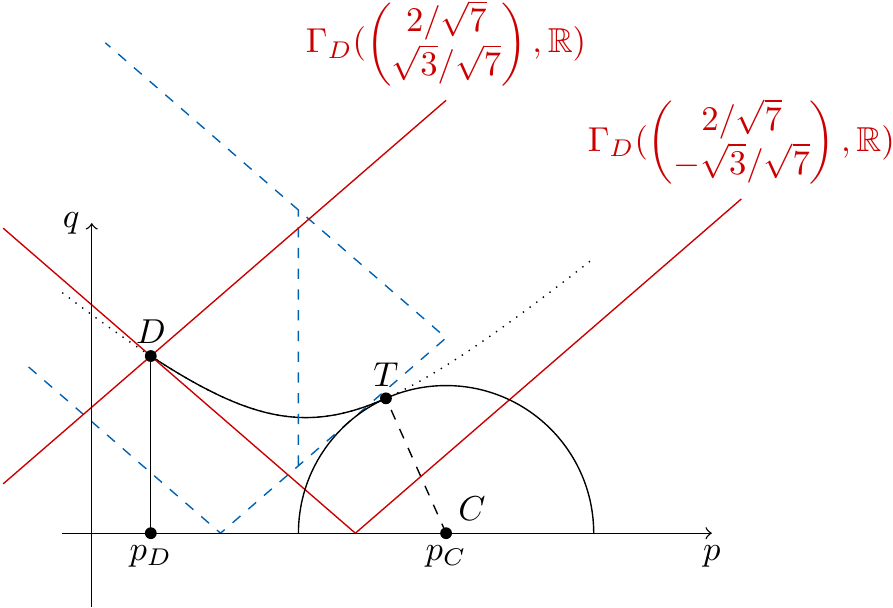}
 \end{center}
 \caption{Example with $H$ consisting of a point and a half-circle, see Lemma \ref{lemmapointcirc}, for $D$ in region $I$ (see Figure \ref{figpointcircpd}). The right panel shows some details of the construction, and in particular the location of the two curves $\Gamma_D((2/\sqrt7, \pm\sqrt3/\sqrt7), \R)$ used in the proof.}
\label{figpointcircI}
\end{figure}

\begin{figure}[t]
 \begin{center}
 \includegraphics[width=5cm]{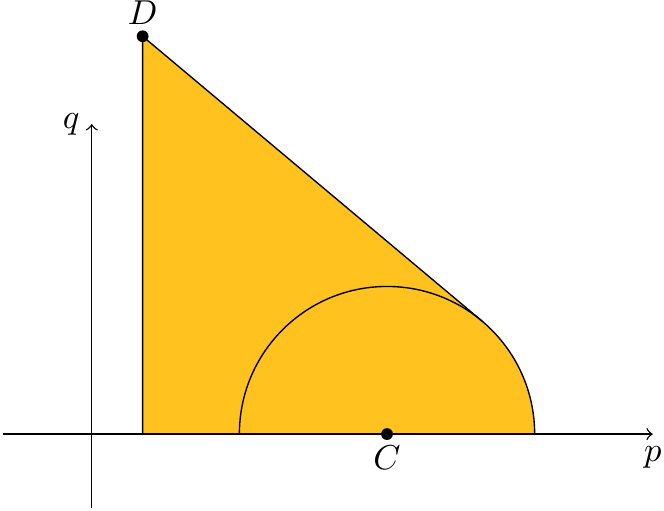}
 \includegraphics[width=5cm]{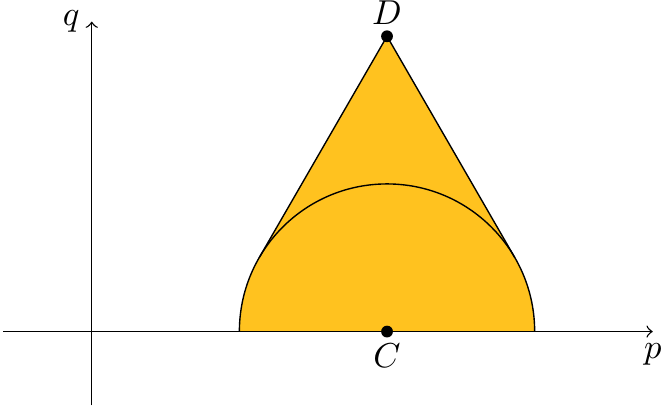}
 \end{center}
 \caption{Two examples with $H$ consisting of a point and a half-circle, see Lemma \ref{lemmapointcirc}, for $D$ in region $II$ (see Figure \ref{figpointcircpd}). }
\label{figpointcircII}
\end{figure}

Next, we consider a second example in which $H$ consists of a half-circle of radius $r$ centered in $C:=(p_C,0)$ and a single point $D:=(p_D,q_D)$,
\begin{equation}\label{eqHcirclepoint}
H:=\{(p_D,q_D)\} \cup \{(p,q): (p-p_C)^2+q^2\le r^2, q\ge 0\}.
\end{equation}
There are several different cases, depending on the existence of one or two hyperbolas in the family considered above which contain the point $D$ and are tangent to the circle. The boundaries between the different phases are vertical lines (corresponding to the construction of $\hat H$ from $H$) and lines with slope $\pm \sqrt3/2$ (corresponding to the maximal slope of the hyperbolas, which is also the boundary between $S^1_+$ and $S^1_-$). The phase diagram is sketched in Figure \ref{figpointcircpd}. The critical points are $X=(p_C-\frac{\sqrt7}{\sqrt3} r,0)$, $Y=(p_C+\frac{\sqrt7}{\sqrt3} r,0)$ and $Z=(p_C,\frac{\sqrt7}{\sqrt4} r)$. For definiteness, we focus on two representative regions.
\begin{lemma}\label{lemmapointcirc}
Let $H$ be as in (\ref{eqHcirclepoint}) with $D$ in region $I$, defined as
\begin{equation}\label{eqDregionI}
 p_D<p_C-r,\hskip5mm
 \frac{\sqrt3}{2}|p_D- p_X| < q_D< \frac{\sqrt3}{2}|p_D- p_Y | .
\end{equation}
Then, there is a unique $y_0=(p_0,q_0)\in\R\times[0,\infty)$ such that the hyperbola $\{q^2-q_0^2=\frac34(p-p_0)^2\}$ contains $D=(p_D,q_D)$ and is tangent to the circle with radius $r$ centered in $C=(p_C,0)$ in a point $T$. Furthermore,
\begin{equation*}
 \Hsdqc = H\cup \{(p,q): p_D\le p\le p_T, q^2\le q_0^2 + \frac34 (p-p_0)^2\}.
\end{equation*}
If, instead, $D$ is in region $II$, defined by
\begin{equation}
 q_D-q_Z \ge \frac{\sqrt3}{2} |p_D-p_C|,
\end{equation}
then $\Hsdqc=\hat H^\conv$.
\end{lemma}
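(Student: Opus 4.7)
The plan is to work directly with the definitions (\ref{eqdefhatH})--(\ref{eqdefHstar}) to identify $\Hsdqc$. First I would compute $\hat H$: the half-disk is already closed under the operation $H\mapsto\hat H$ since it contains its projection onto the $p$-axis, while the isolated point $D$ contributes the vertical segment $\{p_D\}\times[0,q_D]$. The question then reduces to determining which points of $\hat H^\conv$ can be separated by a translated Tartar function $f_{y_1}(p,q)=4(q^2-q_1^2)-3(p-p_1)^2$, whose zero set is a hyperbola with asymptotic slopes $\pm\tfrac{\sqrt3}{2}$.

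For Region I, the heart of the argument is the analysis of the one-parameter family of such hyperbolas passing through $D$. Following the parameterization of Lemma \ref{lemmaranktwolines}, they correspond bijectively to tangent directions $e\in S^1_-$ at $D$ and sweep continuously between the two degenerate limit lines of slope $\pm\tfrac{\sqrt3}{2}$ through $(p_D\pm 2q_D/\sqrt3,0)$. The strict inequalities $\tfrac{\sqrt3}{2}|p_D-p_X|<q_D<\tfrac{\sqrt3}{2}|p_D-p_Y|$ express exactly that one limiting line passes strictly to the right of the half-circle and the other strictly to the left, so by the intermediate value theorem at least one intermediate hyperbola is tangent to the half-circle; monotonicity of the family in its natural parameter (equivalently, in the slope at $D$) yields uniqueness. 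This defines the pair $(y_0,T)$. By Lemma \ref{lemmatartar2} the associated $f_{y_0}$ is symmetric div-quasiconvex, vanishes along the tangent arc and at $D$, is nonpositive throughout $\hat H$ (the half-disk is inside the hyperbola by external tangency, the segment at $D$ by passage through $D$), and is strictly positive at every point of $\hat H^\conv$ strictly above the tangent arc within the strip $[p_D,p_T]$. This yields the inclusion $\Hsdqc\subseteq H\cup\{(p,q): p_D\le p\le p_T,\ q^2\le q_0^2+\tfrac34(p-p_0)^2\}$.

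The reverse inclusion in Region I, and the identity $\Hsdqc=\hat H^\conv$ in Region II, are both obtained by showing that every candidate point fails to be separable. The cleanest route is an envelope argument: any $f_{y_1}$ which is $\le 0$ on $\hat H$ must, by monotonicity within the family of hyperbolas with slopes $\pm\tfrac{\sqrt3}{2}$, have its upper zero-branch lying above the tangent hyperbola through $D$ (in Region I) or above the upper boundary of the convex hull of the half-disk alone (in Region II); in both cases the sublevel set below its zero-branch contains the entire claimed $\Hsdqc$. In Region II the geometric content of the hypothesis $q_D-q_Z\ge\tfrac{\sqrt3}{2}|p_D-p_C|$ is that $D$ lies in the cone above the apex $Z$ where the two degenerate $q_0=0$ lines tangent to the half-circle (namely the lines through $X$ and $Y$) meet; consequently every hyperbola in the family that bounds the half-disk from above already bounds $D$ from above, the $D$-constraint is inactive, and $\Hsdqc=\hat H^\conv$.

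The main obstacle is the envelope argument in Region II, specifically verifying that the upper envelope of all hyperbolas of the form $\{f_{y_1}=c\}$ tangent to the half-circle is bounded above by the cone at $Z$. The critical points $X$, $Y$, $Z$ emerge as the $q_0=0$ degenerate limits of tangency, and one must check via a direct tangency computation (the distance from $C$ to the line $q=\tfrac{\sqrt3}{2}(p-p_X)$ equals $r$, and similarly for $Y$, with $Z$ as their intersection) that $Z$ is indeed the supremum of this envelope. An alternative verification uses the rank-two-curve strategy of Lemma \ref{lemmainnerbound1}: for each $y_*$ in the claimed set one exhibits a direction $e\in S^1$ with $\Gamma_{y_*}(e,\cdot)$ meeting $\hat H$ on both sides of $y_*$; the cone condition guarantees such $e$ always exists, and connectedness of $\Hsdqc$ is then immediate from the explicit descriptions in both regions.
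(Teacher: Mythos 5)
Your overall strategy matches the paper's: compute $\hat H$, then determine which points of $\hat H^\conv$ can be separated by the translated Tartar functions $f_{y_0}$. In Region~I your existence argument for the tangent hyperbola (parametrizing via Lemma~\ref{lemmaranktwolines}, showing the two extremal curves $\Gamma_D((2/\sqrt7,\pm\sqrt3/\sqrt7),[0,\infty))$ fall on opposite sides of $B_C(r)$, and invoking the intermediate value theorem) is exactly the paper's. Where you diverge is in the justification of uniqueness and of the reverse inclusion: you appeal to ``monotonicity of the family in its natural parameter,'' which is plausible but not spelled out. The paper instead makes this precise by observing that two distinct hyperbolas of the family $\{q^2-q_0^2=\tfrac34(p-p_0)^2\}$ intersect in at most one point: subtracting the two equations eliminates the $q^2$ and $p^2$ terms, leaving an equation linear in $p$. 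This single algebraic observation kills both uniqueness of $y_0$ and the possibility that a different member of the family could separate a point below the tangent hyperbola without also separating $D$ or part of the half-disk. You should replace the monotonicity appeal with this computation, or something equally concrete, to close the gap.

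In Region~II your argument takes a genuinely different route from the paper's, and it is murkier. You frame it as ``the $D$-constraint is inactive'' and invoke an envelope of tangent hyperbolas bounded by the cone at $Z$. But $\Hsdqc$ in Region~II is strictly larger than the half-disk (it includes the triangular region up to $D$), so ``inactive constraint'' does not by itself explain why those extra points are non-separable, and the claim ``every hyperbola that bounds the half-disk from above already bounds $D$ from above'' is an unverified global statement about the family. The paper's argument is much shorter and more direct: the upper boundary of $\hat H^\conv$ consists of the two tangent segments from $D$ to the circle, which by the cone condition have slope of absolute value at least $\sqrt3/2$; since the functions $f_{y_1}$ are affine along lines of slope $\pm\sqrt3/2$ (the quadratic part $4q^2-3p^2$ vanishes on such lines) and convex in steeper directions, no such function can exceed its maximum over $\hat H$ at an interior point of a convex set whose boundary is at least that steep. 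I would recommend replacing your envelope discussion with this slope observation. Your closing remark about a rank-two-curve alternative via Lemma~\ref{lemmainnerbound1} is logically admissible (it gives $y_*\in\Phi(\Kinfty)\subseteq\Hsdqc$ without needing connectedness), but it is a detour: Lemma~\ref{lemmapointcirc} is a purely combinatorial computation of $\Hsdqc$ from its definition, and routing it through $\Kinfty$ mixes layers of the paper's architecture unnecessarily.
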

\begin{proof}
The second case is straightforward. The boundary of $\hat H^\conv$ has slope at least $\sqrt3/2$, hence there is no possibility to separate any point of it using the given hyperbolas. A sketch is shown in Figure \ref{figpointcircII}.

The first case, corresponding to region $I$ in Figure \ref{figpointcircpd}, requires a more detailed argument. We first have to show that there is a unique hyperbola of the type $q^2-q_0^2=\frac34(p-p_0)^2$ which contains $D$ and is tangent to the half-circle. We refer to Figure \ref{figpointcircI} for an illustration.

The condition that $y_D$ belongs to the hyperbola translates into
\begin{equation}
 q_0^2=q_D^2-\frac34 (p_D-p_0)^2.
\end{equation}
The condition of being tangent means that the system
\begin{equation}
 \begin{cases}
 q^2=q_D^2-\frac34 (p_D-p_0)^2+\frac34(p-p_0)^2\\
 (p-p_C)^2+q^2=r^2
 \end{cases}
\end{equation}
has a double solution. Note that these equations are both quadratic in $p$ and linear in $q^2$, hence the system is overall of second order in these two variables. Substituting $q^2$ into the second equation leads to the condition that
\begin{equation}
 (p-p_C)^2+q_D^2-\frac34 (p_D-p_0)^2+\frac34(p-p_0)^2=r^2
\end{equation}
has a double solution $p_T$, which should satisfy $p_T\in [p_C-r,p_C+r]$. This solution can be computed explicitly, but for proving the assertion existence suffices.
To this end, we consider the family of curves $\Gamma_D(e,\R)$ constructed in Lemma \ref{lemmaranktwolines} for $|e_2|\le \frac{\sqrt 3}2e_1$.
The assumption (\ref{eqDregionI}) implies that $\Gamma_D((2/\sqrt7, -\sqrt3/\sqrt7), [0,\infty))$ intersects $B_C(r)$, but $\Gamma_D((2/\sqrt7, +\sqrt3/\sqrt7), [0,\infty))$ does not (notice that both these curves are piecewise affine). By continuity there is $e_*$ in the given interval such that $\Gamma_D(e_*,\R)$ is tangent to $B_C(r)$. We denote by $T$ the intersection of the two, and define $(q_0, p_0)$ so that $\Gamma_D(e_*,\R)$ is the set $q^2-q_0^2=\frac34(p-p_0)^2$ (see Figure \ref{figpointcircI}).

To conclude the proof, it suffices to show that no point of the given set can be separated by another hyperbola. To this end, it suffices to show that no other hyperbola of the given family can have two points in common with the given one. This follows from the fact that any solution to the system
\begin{equation}
 \begin{cases}
    q^2-q_0^2=\frac34(p-p_0)^2\\
    q^2-q_1^2=\frac34(p-p_1)^2
 \end{cases}
\end{equation}
obeys $q_0^2-q_1^2=\frac34 (p_1^2-p_0^2-2pp_1-2pp_0)$, which is a linear equation in $p$ and, therefore, has at most one solution. If $p$ is unique, since $q\ge 0$ obviously $q$ is also unique. This concludes the proof.
\end{proof}

\section*{Acknowledgements}

This work was partially supported by the Deutsche Forschungsgemeinschaft through the Sonderforschungsbereich 1060 {\sl ``The mathematics of emergent effects''}, project A5, and through the Hausdorff Center for Mathematics, {GZ 2047/1, project-ID 390685813.}


\end{document}